\documentclass[11pt,a4paper]{polyshape}

\usepackage{amsthm}
\usepackage{enumitem}
\usepackage{natbib}
\usepackage{subcaption}
\usepackage[colorinlistoftodos]{todonotes}

\newcommand{\CMP}{\mathbf{CMP}}
\newcommand{\E}{\mathcal{E}}
\newcommand{\Hom}{\mathrm{Hom}}
\newcommand{\id}{\mathrm{id}}
\newcommand{\Id}{\mathrm{Id}}
\newcommand{\Int}{\mathrm{Int}}
\newcommand{\K}{\mathcal{K}}
\newcommand{\M}{\mathcal{M}}
\newcommand{\Ob}{\mathrm{Ob}}
\newcommand{\PC}{\mathcal{P}}
\newcommand{\T}{\mathcal{T}}
\newcommand{\ThetaC}{\mathcal{O}}

\newtheorem{corollary}{Corollary}
\newtheorem{definition}{Definition}
\newtheorem{example}{Example}
\newtheorem{lemma}{Lemma}
\newtheorem{proposition}{Proposition}
\newtheorem{theorem}{Theorem}

\setlist[enumerate]{leftmargin=*, topsep=0pt, itemsep=0pt}

\uselogo

\title{Colored Markov polycategories and diagrammatic differentiation}

\author[1,2]{Theodore Papamarkou}
\affil[1]{PolyShape, Greece}
\affil[2]{National Technical University of Athens, Greece}

\correspondingauthor{theodore@polyshape.com}
\reportnumber{}

\date{}

\begin{document}

\maketitle

\begin{abstract}
Many stochastic systems are built by wiring typed components together, but the wiring is often neither purely sequential nor type-homogeneous. This paper develops categorical semantics for such systems using ordered polycategories whose morphisms are Markov kernels. The basic operation is kernel slotwise composition, which connects one output slot of a many-output kernel to one input slot of another and marginalizes the internal wire. We prove its structural laws by assigning trace semantics to finite acyclic diagrams. We then introduce colored Markov polycategories, where objects and kernels carry colors and typed connections are realized by interface kernels satisfying identity and composition laws. This gives a colored kernel slotwise composition and trace semantics for typed stochastic diagrams. To describe systems whose structure changes, we co-index colored Markov polycategories and parameter spaces over an indexing category. Finally, for finite acyclic parameterized diagrams, we prove a diagrammatic differentiation result. The derivative of an expected scalar objective is obtained from local reverse-mode contributions at the parameterized vertices, with stochastic and deterministic kernels handled through local gradient representing functions. The construction gives a typed, compositional language for finite acyclic stochastic systems and their parameter sensitivities.
\end{abstract}

\section{Introduction}
\label{sec:introduction}

Many stochastic systems are assembled from typed components whose interactions are not purely sequential. A component may have several inputs and several outputs, and only some of its output wires may be connected to later components. The composition of Markov kernels typically captures sequential stochastic computation, without recording the slotwise wiring of many-input, many-output components or the typed compatibility constraints between the wires being connected. This paper develops compositional semantics for such systems using ordered polycategories whose morphisms are interpreted as Markov kernels.

The central structure introduced here is the colored Markov polycategory. Its objects carry standard Borel spaces and object colors, its morphisms are Markov kernels with morphism colors, and its typed wire connections are mediated by interface kernels. We first define uncolored kernel slotwise composition and prove its structural laws by giving trace semantics for finite acyclic diagrams. We then add colors and interface systems, obtaining colored kernel slotwise composition and corresponding trace semantics for typed diagrams. We also define co-indexed colored Markov polycategories, in which colored Markov polycategories and parameter spaces vary functorially over an indexing category. Finally, we prove a bounded diagrammatic differentiation result for finite acyclic parameterized diagrams. The result identifies the gradient of an expected scalar objective with the expectation of local reverse-mode contributions at the parameterized vertices.

The scope of the paper is restricted to the hypotheses under which the trace and differentiation results are proved. All diagrams are finite and acyclic, all measurable spaces are standard Borel, and all interface color morphisms are fixed as part of the diagram. The differentiation applies to a parameterized diagram in a colored Markov polycategory, while the co-indexed structure transports parameters between states by differentiable pushforwards.

\section{Background and notation}
\label{sec:background}

This section presents the notation used throughout the paper. It recalls the background needed for the later constructions, namely standard Borel spaces and Markov kernels, ordered polycategories, string diagrams, colors, and co-indexed structures.

\subsection{Standard Borel spaces and Markov kernels}
\label{subsec:sbs_markov_kernels}

The probabilistic semantics in this paper is based on Markov kernels between standard Borel spaces. A measurable space $(S,\Sigma_S)$ is standard Borel if it is isomorphic, as a measurable space, to a Borel subset of a Polish space equipped with its Borel $\sigma$-algebra. Standard Borel spaces are closed under finite products.

Each object $X$ considered in this paper is assigned an underlying standard Borel space, denoted by $(\underline{X},\Sigma_{\underline{X}})$. A value of type $X$ is denoted by a lower-case letter $x\in \underline{X}$. If $\mathbf{X}=(X_1,\ldots,X_m)$ is a finite ordered tuple of objects, then its underlying measurable space is the product standard Borel space $(\underline{\mathbf{X}}, \Sigma_{\underline{\mathbf{X}}}) = (\prod_{r=1}^m \underline{X}_r, \bigotimes_{r=1}^m \Sigma_{\underline{X}_r})$. A value of type $\mathbf{X}$ is written $\mathbf{x}=(x_1,\ldots,x_m)$. When $m=0$, the product is interpreted as the one-point standard Borel space.

A Markov kernel from an object $X$ to an object $Y$ is a map $k:(\Sigma_{\underline{Y}},\underline{X})\to [0,1]$ such that $k(\cdot\mid x)$ is a probability measure on $(\underline{Y},\Sigma_{\underline{Y}})$ for each $x\in \underline{X}$, and $x\mapsto k(E\mid x)$ is $\Sigma_{\underline{X}}$-measurable for each $E\in \Sigma_{\underline{Y}}$. We write $k(dy\mid x)$ for the probability measure $k(\cdot\mid x)$.

We also write $k:X\to Y$ as a shorthand for the kernel $k:(\Sigma_{\underline{Y}},\underline{X})\to [0,1]$. For tuple-valued kernels, the notation $k:\mathbf{X}\to \mathbf{Y}$ similarly abbreviates the kernel signature $k:(\Sigma_{\underline{\mathbf{Y}}},\underline{\mathbf{X}})\to [0,1]$. In this paper, polycategorical morphisms are interpreted as Markov kernels, so we identify a morphism with its kernel interpretation and use the same signature notation for both.

If $k:X\to Y$ and $l:Y\to Z$ are Markov kernels, their composite $l\circ k:X\to Z$ is the Markov kernel defined by
\[
(l\circ k)(E\mid x)
=
\int_{\underline{Y}} l(E\mid y)\,k(dy\mid x)
\]
for every $E\in \Sigma_{\underline{Z}}$ and $x\in \underline{X}$. This is the Chapman-Kolmogorov composition of kernels \citep{kallenberg2021}. The identity kernel on $X$ is the kernel $\id_X:X\to X$ given by $\id_X(E\mid x)=\delta_x(E)$, where $\delta_x$ is the Dirac probability measure at $x$.

A measurable map $f:\underline{X}\to \underline{Y}$ determines a deterministic Markov kernel $\delta_f:X\to Y$ by $\delta_f(E\mid x)=\delta_{f(x)}(E)$. Thus, deterministic maps are treated as special cases of Markov kernels.

For finite ordered tuples $\mathbf{X}=(X_1,\ldots,X_m)$ and $\mathbf{Y}=(Y_1,\ldots,Y_n)$, a Markov kernel from $\mathbf{X}$ to $\mathbf{Y}$ means a Markov kernel from $(\underline{\mathbf{X}},\Sigma_{\underline{\mathbf{X}}})$ to $(\underline{\mathbf{Y}},\Sigma_{\underline{\mathbf{Y}}})$. These tuple-valued kernels are the probabilistic interpretation of the many-input, many-output morphisms used in this paper.

The categorical perspective on Markov kernels is related to the theory of Markov categories \citep{fritz2020}. The present paper uses Markov kernels as the semantic interpretation of polycategorical morphisms and introduces slotwise composition separately.

\subsection{Ordered polycategories}
\label{subsec:ordered_polycategories}

This paper uses polycategories rather than multicategories. A multicategory has many-input, single-output morphisms, while a polycategory has many-input, many-output morphisms. Since the kernels studied here have signatures of the form $k:\mathbf{A}\to \mathbf{B}$, with both $\mathbf{A}$ and $\mathbf{B}$ finite ordered tuples, the many-output structure is required.

Polycategories were introduced by \citet{szabo1975}. There are several variants in the literature. \citet{garner2008} gives a modern abstract account of symmetric polycategories, while non-symmetric and planar variants occur in related work on weakly distributive structures and monadic approaches to polycategories \citep{cockettseely1997,koslowski2005}. The word ``planar'' can carry additional no-crossing restrictions beyond the convention needed here. For this reason, we use the phrase ``ordered polycategory'' to refer only to the strict ordered-list convention.

Thus, input and output profiles are finite ordered tuples of objects. Equality of profiles means equality of ordered tuples, as opposed to equality up to permutation. Let $\PC$ be such a polycategory. Its objects form a collection $\Ob(\PC)$. For finite ordered tuples of objects $\mathbf{A}=(A_1,\ldots,A_m)$ and $\mathbf{B}=(B_1,\ldots,B_n)$, the set of morphisms from $\mathbf{A}$ to $\mathbf{B}$ is denoted by $\Hom_{\PC}(\mathbf{A};\mathbf{B})$. A morphism $k\in \Hom_{\PC}(\mathbf{A};\mathbf{B})$ is written $k:\mathbf{A}\to \mathbf{B}$. Empty input or output tuples are allowed.

For every object $X\in \Ob(\PC)$, there is an identity morphism $\id_X:(X)\to (X)$. Composition is defined slotwise. Let $k:\mathbf{A}\to \mathbf{B}$ and $l:\mathbf{C}\to \mathbf{D}$, where $\mathbf{A}=(A_1,\ldots,A_m)$, $\mathbf{B}=(B_1,\ldots,B_n)$, $\mathbf{C}=(C_1,\ldots,C_q)$, and $\mathbf{D}=(D_1,\ldots,D_r)$. If the $i$-th output object of $k$ equals the $j$-th input object of $l$, that is, if $B_i=C_j$, then their slotwise composite is written $l\circ_{(i,j)}k$. Its input profile is
\[
(C_1,\ldots,C_{j-1},A_1,\ldots,A_m,C_{j+1},\ldots,C_q),
\]
and its output profile is
\[
(B_1,\ldots,B_{i-1},D_1,\ldots,D_r,B_{i+1},\ldots,B_n).
\]

The identity morphisms are units for slotwise composition. Iterated slotwise compositions satisfy the associativity and interchange axioms for polycategories. Equivalently, any two iterated composites with the same underlying finite acyclic wiring diagram and the same external ordered input and output profiles are equal.

The insertion convention in the displayed profiles is used throughout the paper, retaining the order of wires and avoiding permutations. This allows the structural laws for kernel slotwise composition to be stated as equalities of kernels with identical ordered signatures.

\subsection{String diagrams for polycategorical composition}
\label{subsec:string_diagrams_polycats}

String diagrams are used as a graphical notation for slotwise composition. A morphism $k:\mathbf{A}\to \mathbf{B}$ is drawn as a box with input wires labelled by the ordered tuple $\mathbf{A}$ and output wires labelled by the ordered tuple $\mathbf{B}$. For $k:\mathbf{A}\to \mathbf{B}$ and $l:\mathbf{C}\to \mathbf{D}$, the slotwise composite $l\circ_{(i,j)}k$ is drawn by connecting the $i$-th output wire of $k$ to the $j$-th input wire of $l$.

The diagrams in this paper follow the ordered convention of Subsection~\ref{subsec:ordered_polycategories}. Thus a diagram is read together with the ordered external input and output profiles and with the indicated slotwise connections.

Only finite acyclic string diagrams are considered. A finite acyclic diagram is built from finitely many boxes by finitely many slotwise connections, with no directed cycle of wires. An unconnected input slot is an external input wire, and an unconnected output slot is an external output wire. The ordered tuple of all external input wires is the input profile of the diagram, and the ordered tuple of all external output wires is the output profile of the diagram.

The string diagrams are also linear in the following sense. Each input slot is connected to at most one output slot, and each output slot is connected to at most one input slot. There is no implicit copying, merging, or discarding of wires. Such operations can be represented only if they are supplied as morphisms. This convention is required because the kernel semantics developed here treats each wire as carrying one value of its assigned object type.

For a finite acyclic diagram, its boxes are interpreted as Markov kernels. Its internal wires are integrated out, and the external input and output wires determine the Markov kernel represented by the whole diagram. This interpretation is developed first for uncolored diagrams in Section~\ref{sec:markov_polycategories_ksc} and then for colored diagrams in Section~\ref{sec:cksc_diagram_semantics}.

\subsection{Colors and typed interfaces}
\label{subsec:colors_typed_interfaces}

The word color is used in the standard categorical sense of a type or sort. In the present paper, colors are used to record which objects and morphisms may be composed.

There are two kinds of colors. Object colors classify the objects carried by wires. They are organized by an object-color category $\K$. If $X$ is an object, its object color is written $\chi(X)\in \Ob(\K)$. The object-coloring notation is extended componentwise to ordered profiles. If $\mathbf{A}=(A_1,\ldots,A_m)$, then $\chi(\mathbf{A})=(\chi(A_1),\ldots,\chi(A_m))$. Morphism colors classify the roles or types of boxes. They are organized by a morphism-color polycategory, denoted by $\M$. If $k$ is a morphism, its morphism color is written $\psi(k)\in \Hom_{\M}(\chi(\mathbf{A});\chi(\mathbf{B}))$.

Object colors control whether an output wire can be connected to an input wire. In the uncolored setting of Subsection~\ref{subsec:ordered_polycategories}, a slotwise composite $l\circ_{(i,j)}k$ requires literal equality $B_i=C_j$. In the colored setting, this condition is relaxed. The objects $B_i$ and $C_j$ may be different, provided that their colors are compatible. This compatibility is specified by a morphism $f:\chi(B_i)\to \chi(C_j)$ in $\K$.

A color-compatible connection may require a transformation between the corresponding measurable spaces. This transformation is represented by an interface kernel. If $f:\chi(B)\to \chi(C)$ is a compatible object-color morphism, the associated interface kernel is written $\kappa_{f;B,C}:B\to C$. The notation includes the objects $B$ and $C$, and the color morphism $f$, because the same color-level relation may apply to different underlying measurable spaces.

The definition of interface kernels and their identity and composition laws are given in Section~\ref{sec:colored_markov_polycategories}. The convention is that colors specify typed compatibility, while interface kernels specify the probabilistic or deterministic data transformation needed to realize a typed connection.

Morphism colors are used separately from object colors. They classify boxes by their typed input and output profiles. Identity morphism colors in $\M$ are written $\id_c:(c)\to(c)$ for object colors $c$.

\subsection{Co-indexed structures and parameter spaces}
\label{subsec:coindexed_structures_parameters}

The dynamic part of the paper uses co-indexed structures. The indexing category is denoted by $\T$. Its objects are written $t,t',t''$, and its morphisms are written $\alpha:t \to t'$. In applications, an object $t$ may represent a time, a system state, or a structural configuration, while a morphism $\alpha:t \to t'$ represents a transition from one such state to another.

A co-indexed family of structures over $\T$ is treated here as a covariant functor out of $\T$. Thus, if $\E$ is a category of mathematical structures, a co-indexed structure is a functor $F:\T \to \E$. For each $t\in \Ob(\T)$, the object $F(t)$ is the structure assigned to state $t$. For each morphism $\alpha:t\to t'$ in $\T$, the morphism $F(\alpha):F(t) \to F(t')$ is the corresponding pushforward along $\alpha$. This convention for co-indexing \citep{jacobs1999,johnstone2002} is dual to the contravariant convention used for indexed categories.

In the present paper, the main state-level structures will be colored Markov polycategories. Before introducing them formally, we introduce the notation for pushforwards. If $\PC_t$ is the state structure at $t$, then the pushforward along $\alpha:t\to t'$ is written $\alpha_{!,s}:\PC_t \to \PC_{t'}$. The subscript $s$ indicates that this is a state pushforward. The exclamation mark is used to emphasize covariance.

The parameter spaces are treated in parallel. For each state $t$, there is a finite-dimensional real vector space $\Theta_t$ of parameters. These spaces form a category $\ThetaC$ whose objects are the parameter spaces and whose morphisms are differentiable maps between them. A parameter pushforward along $\alpha:t\to t'$ is written $\alpha_{!,\theta}:\Theta_t \to \Theta_{t'}$, and specifies how parameters are transported when the state changes.

The formal definition of a co-indexed colored Markov polycategory uses two functors, $F_{\PC}:\T \to \CMP,~F_\theta:\T \to \ThetaC$, where $\CMP$ denotes the category of colored Markov polycategories. The first functor assigns a colored Markov polycategory to each state. The second functor assigns a parameter space to each state. In this paper, these functors are taken to be strict. Hence, identities and composites in $\T$ are preserved, that is, $F_{\PC}(\id_t)=\Id_{\PC_t},~F_{\PC}(\beta\circ \alpha)=F_{\PC}(\beta)\circ F_{\PC}(\alpha)$, and similarly $F_\theta(\id_t)=\id_{\Theta_t},~F_\theta(\beta\circ \alpha)=F_\theta(\beta)\circ F_\theta(\alpha)$. This strict convention keeps the later learning and diagrammatic semantics statements free of coherence isomorphisms.

\section{Markov polycategories and kernel slotwise composition}
\label{sec:markov_polycategories_ksc}

This section develops the uncolored calculus of kernel slotwise composition. The purpose is to isolate the measure-theoretic content of slotwise composition before adding colors and typed interfaces. Colors are introduced in Section~\ref{sec:colored_markov_polycategories}. Proofs not included in the main text of this section are given in Appendix~\ref{app:ksc_proofs}.

\subsection{Markov polycategories}
\label{subsec:markov_polycategories}

We first define the Markov polycategory datum used in this section. Definition~\ref{def:markov_polycategory_data} records the objects, their measurable spaces, and the kernel-valued morphisms. The slotwise composition operation is introduced in Subsection~\ref{subsec:ksc}.

\begin{definition}[Markov polycategory datum]
\label{def:markov_polycategory_data}
A Markov polycategory datum $\PC$ consists of the following data.
\begin{enumerate}
\item $\PC$ has a collection $\Ob(\PC)$ of objects. Each object $X\in \Ob(\PC)$ is assigned a standard Borel space $(\underline{X},\Sigma_{\underline{X}})$.
\item For every pair of finite ordered tuples $\mathbf{A}=(A_1,\ldots,A_m)$ and $\mathbf{B}=(B_1,\ldots,B_n)$ of objects of $\PC$, there is a set $\Hom_{\PC}(\mathbf{A};\mathbf{B})$ of morphisms from $\mathbf{A}$ to $\mathbf{B}$.
\item Every morphism $k\in \Hom_{\PC}(\mathbf{A};\mathbf{B})$ is a Markov kernel $k:(\Sigma_{\underline{\mathbf{B}}},\underline{\mathbf{A}})\to [0,1]$. Thus, for every $\mathbf{a}\in \underline{\mathbf{A}}$, the expression $k(d\mathbf{b}\mid \mathbf{a})$ denotes a probability measure on $(\underline{\mathbf{B}},\Sigma_{\underline{\mathbf{B}}})$.
\item For every object $X\in \Ob(\PC)$, the identity morphism $\id_X\in \Hom_{\PC}((X);(X))$ is the identity Markov kernel $\id_X(E\mid x)=\delta_x(E)$ for every $E\in \Sigma_{\underline{X}}$ and $x\in \underline{X}$.
\end{enumerate}
\end{definition}

A Markov polycategory datum becomes a Markov polycategory once it is equipped with a slotwise composition operation satisfying the unit and associativity axioms of Subsection~\ref{subsec:ordered_polycategories}. Subsection~\ref{subsec:ksc} defines such an operation by an integral formula. The resulting operation is called kernel slotwise composition.

The word ``datum'' in Definition~\ref{def:markov_polycategory_data} is used to avoid circularity. At this point, the kernel-valued morphisms and identity kernels have been specified, but the composition of two many-input, many-output kernels has not yet been defined. The next subsection gives this composition.

\subsection{Kernel slotwise composition}
\label{subsec:ksc}

Let $\PC$ be a Markov polycategory datum. Let $k:\mathbf{A}\to \mathbf{B}$ and $l:\mathbf{C}\to \mathbf{D}$ be morphisms of $\PC$, where
$\mathbf{A}=(A_1,\ldots,A_m),~\mathbf{B}=(B_1,\ldots,B_n),~\mathbf{C}=(C_1,\ldots,C_q),~\mathbf{D}=(D_1,\ldots,D_r)$. Let $i\in \{1,\ldots,n\}$ and $j\in \{1,\ldots,q\}$. In the uncolored setting, the slotwise composite along $(i,j)$ is defined only when $B_i=C_j$ as objects of $\PC$.

We write $\mathbf{C}_{-j}$ for the ordered tuple obtained from $\mathbf{C}$ by deleting $C_j$, and $\mathbf{B}_{-i}$ for the ordered tuple obtained from $\mathbf{B}$ by deleting $B_i$. The input and output profiles of the composite are
\[
\boldsymbol{\Gamma}
=
(C_1,\ldots,C_{j-1},A_1,\ldots,A_m,C_{j+1},\ldots,C_q)
\]
and
\[
\boldsymbol{\Delta}
=
(B_1,\ldots,B_{i-1},D_1,\ldots,D_r,B_{i+1},\ldots,B_n).
\]
For $\mathbf{a}\in \underline{\mathbf{A}}$ and $\mathbf{c}_{-j}\in \underline{\mathbf{C}_{-j}}$, let
\[
V_j(\mathbf{a},\mathbf{c}_{-j})
=
(c_1,\ldots,c_{j-1},a_1,\ldots,a_m,c_{j+1},\ldots,c_q)
\in \underline{\boldsymbol{\Gamma}}.
\]
For $\mathbf{b}\in \underline{\mathbf{B}}$, let $\pi_i(\mathbf{b})=b_i$ and $\pi_{-i}(\mathbf{b})=(b_1,\ldots,b_{i-1},b_{i+1},\ldots,b_n)$. For $z\in \underline{B_i}=\underline{C_j}$, define
\[
T_j(\mathbf{c}_{-j},z)
=
(c_1,\ldots,c_{j-1},z,c_{j+1},\ldots,c_q)
\in \underline{\mathbf{C}}.
\]
For $\mathbf{b}_{-i}\in \underline{\mathbf{B}_{-i}}$ and $\mathbf{d}\in \underline{\mathbf{D}}$, define
\[
U_i(\mathbf{b}_{-i},\mathbf{d})
=
(b_1,\ldots,b_{i-1},d_1,\ldots,d_r,b_{i+1},\ldots,b_n)
\in \underline{\boldsymbol{\Delta}}.
\]

\begin{definition}[Kernel slotwise composition]
\label{def:ksc}
The kernel slotwise composition (KSC) of $l$ with $k$ along the slot pair $(i,j)$ is the Markov kernel $l\circ_{(i,j)}k:\boldsymbol{\Gamma}\to \boldsymbol{\Delta}$ defined as follows. For every $E\in \Sigma_{\underline{\boldsymbol{\Delta}}}$ and every input $V_j(\mathbf{a},\mathbf{c}_{-j})\in \underline{\boldsymbol{\Gamma}}$,
\[
(l\circ_{(i,j)}k)(E\mid V_j(\mathbf{a},\mathbf{c}_{-j}))\\
=
\int_{\underline{\mathbf{B}}}
\left(
\int_{\underline{\mathbf{D}}}
\mathbf{1}_E\bigl(U_i(\pi_{-i}(\mathbf{b}),\mathbf{d})\bigr)
\,l(d\mathbf{d}\mid T_j(\mathbf{c}_{-j},\pi_i(\mathbf{b})))
\right)
k(d\mathbf{b}\mid \mathbf{a}).
\]
\end{definition}

The formula first samples the output tuple $\mathbf{b}$ of $k$ from the input $\mathbf{a}$. Its $i$-th component $b_i$ is then used as the $j$-th input of $l$, together with the remaining external inputs $\mathbf{c}_{-j}$. The outputs of the composite are the outputs $\mathbf{d}$ of $l$ together with the unused output components $\pi_{-i}(\mathbf{b})$ of $k$, in the ordered profile $\boldsymbol{\Delta}$.

\begin{example}[Kernel slotwise composition of Gaussian kernels]
\label{ex:ksc_gaussian}
Let $\PC$ be a Markov polycategory datum in which the objects $A,B_1,B_2,C_1,C_2,D$ all have underlying standard Borel space $(\mathbb{R},\mathcal{B}(\mathbb{R}))$. Assume that $B_1=C_1$ as objects of $\PC$.

Let $k:(A)\to (B_1,B_2)$ be the Markov kernel defined by
\[
k(d(b_1,b_2)\mid a)
=
\mathcal{N}(db_1;a,\sigma_1^2)\,\delta_a(db_2),
\]
where $\mathcal{N}(db_1;a,\sigma_1^2)$ denotes the Gaussian probability measure on $\mathbb{R}$ with mean $a$ and variance $\sigma_1^2$. Thus, conditional on $a$, the first output is Gaussian with mean $a$, while the second output is deterministically equal to $a$.

Let $l:(C_1,C_2)\to (D)$ be the Markov kernel defined by
\[
l(dd\mid (c_1,c_2))
=
\mathcal{N}(dd;c_1+c_2,\sigma_2^2).
\]
The slotwise composite $h=l\circ_{(1,1)}k$ connects the first output of $k$ to the first input of $l$. Its input profile is $(A,C_2)$ and its output profile is $(D,B_2)$.

For $E\in \mathcal{B}(\mathbb{R}^2)$ and input $(a,c_2)\in \mathbb{R}^2$, Definition~\ref{def:ksc} gives
\[
h(E\mid (a,c_2))
=
\int_{\mathbb{R}^2}
\left(
\int_{\mathbb{R}}
\mathbf{1}_E(d,b_2)\,
\mathcal{N}(dd;b_1+c_2,\sigma_2^2)
\right)
\mathcal{N}(db_1;a,\sigma_1^2)\,\delta_a(db_2).
\]
Since $b_1\mid a$ is Gaussian with mean $a$ and variance $\sigma_1^2$, and $d\mid b_1,c_2$ is Gaussian with mean $b_1+c_2$ and variance $\sigma_2^2$, the marginal conditional law of $d$ given $(a,c_2)$ is Gaussian with mean $a+c_2$ and variance $\sigma_1^2+\sigma_2^2$. The second output remains deterministically equal to $a$. Hence the composite kernel is
\[
h(d(d,b_2)\mid (a,c_2))
=
\mathcal{N}(dd;a+c_2,\sigma_1^2+\sigma_2^2)\,\delta_a(db_2).
\]

This example illustrates the two roles of kernel slotwise composition. The connected component $b_1$ is integrated out, while the unconnected output $b_2$ of $k$ passes through as an output of the composite.
\end{example}

\subsection{Trace semantics for finite acyclic KSC diagrams}
\label{subsec:trace_semantics_ksc}

Definition~\ref{def:ksc} gives a binary slotwise composition operation. We now define the semantics of a finite acyclic diagram of such compositions. The point of this formulation is that the composite kernel of a diagram is obtained by integrating out its internal wires.

\begin{definition}[Finite acyclic KSC diagram]
\label{def:finite_acyclic_ksc_diagram}
Let $\PC$ be a Markov polycategory datum. A finite acyclic KSC diagram $D$ in $\PC$ consists of the following data.
\begin{enumerate}
\item $D$ has a finite set $V(D)$ of vertices. Each vertex $u\in V(D)$ is labelled by a morphism $k_u:\mathbf{A}_u\to \mathbf{B}_u$ of $\PC$, where $\mathbf{A}_u=(A_{u,1},\ldots,A_{u,m_u})$ and $\mathbf{B}_u=(B_{u,1},\ldots,B_{u,n_u})$ are finite ordered tuples of objects.
\item $D$ has a finite set $W_{\mathrm{int}}(D)$ of internal wires. Each internal wire is a pair $e=((u,p),(v,q))$, where $u,v\in V(D)$, $1\leq p\leq n_u$, and $1\leq q\leq m_v$. It connects the $p$-th output slot of $u$ to the $q$-th input slot of $v$. It is required that $B_{u,p}=A_{v,q}$.
\item Each input slot is the target of at most one internal wire, and each output slot is the source of at most one internal wire.
\item The directed graph on $V(D)$ with an edge $u\to v$ for every internal wire $((u,p),(v,q))$ is acyclic.
\end{enumerate}

The external input slots of $D$ are the input slots $(v,q)$ that are not targets of internal wires. The external output slots of $D$ are the output slots $(u,p)$ that are not sources of internal wires.

The diagram includes an ordering of all its external input slots and all its external output slots. If the ordered list of external input slots is $((v_1,q_1),\ldots,(v_a,q_a))$, then the external input profile of $D$ is $\mathbf{I}_D=(A_{v_1,q_1},\ldots,A_{v_a,q_a})$. If the ordered list of external output slots is $((u_1,p_1),\ldots,(u_b,p_b))$, then the external output profile of $D$ is $\mathbf{O}_D=(B_{u_1,p_1},\ldots,B_{u_b,p_b})$.
\end{definition}

The third clause of Definition~\ref{def:finite_acyclic_ksc_diagram} is the linearity condition on wires. It excludes implicit copying and merging. Since every unconnected output slot is an external output slot, the diagram also has no implicit discarding.

The specified orderings of external input and output slots are part of the data because a finite acyclic diagram does not need to have a preferred left-to-right order of external wires. Once the ordered external profiles are fixed, the trace construction of Definition~\ref{def:trace_measure_ksc} assigns a Markov kernel $\mathbf{I}_D \to \mathbf{O}_D$ to the diagram.

Definition~\ref{def:finite_acyclic_ksc_diagram} gives a combinatorial presentation of a finite acyclic string diagram. Its vertices are boxes labelled by morphisms of $\PC$, and its internal wires record slotwise connections between output and input slots. The trace kernel of Definition~\ref{def:trace_measure_ksc} is the Markov kernel denoted by the diagram.

\begin{definition}[Trace measure of a finite acyclic KSC diagram]
\label{def:trace_measure_ksc}
Let $D$ be a finite acyclic KSC diagram in a Markov polycategory datum $\PC$. Let $u_1,\ldots,u_N$ be a topological ordering of the vertices $V(D)$ of $D$. For each $\ell\in \{1,\ldots,N\}$, write $k_{u_\ell}:\mathbf{A}_{u_\ell}\to \mathbf{B}_{u_\ell}$ for the kernel labelling $u_\ell$.

Let $\mathbf{x}\in \underline{\mathbf{I}_D}$ be a value of the external input profile of $D$. Since the ordering of $V(D)$ is topological, each input slot of $u_\ell$ is filled either by an external input value from $\mathbf{x}$ or by an output value of some earlier vertex $u_s$ with $s<\ell$. Hence, there is a measurable input map
\[
R_\ell:
\underline{\mathbf{I}_D}
\times
\underline{\mathbf{B}_{u_1}}
\times\cdots\times
\underline{\mathbf{B}_{u_{\ell-1}}}
\to
\underline{\mathbf{A}_{u_\ell}}
\]
that returns the ordered input tuple of $u_\ell$.

The trace measure of $D$ with respect to the topological ordering of $V(D)$ is the probability measure $\mu_D^{u_1,\ldots,u_N}(\,\cdot\mid \mathbf{x})$ on $\prod_{\ell=1}^N \underline{\mathbf{B}_{u_\ell}}$ defined by the iterated kernel product
\[
\mu_D^{u_1,\ldots,u_N}
(d\mathbf{y}_1,\ldots,d\mathbf{y}_N\mid \mathbf{x})
=
\prod_{\ell=1}^N
k_{u_\ell}
\left(
d\mathbf{y}_\ell
\mid
R_\ell(\mathbf{x},\mathbf{y}_1,\ldots,\mathbf{y}_{\ell-1})
\right),
\]
where the product notation denotes the corresponding iterated integral in the order $u_1,\ldots,u_N$.

Let
\[
P_{\mathrm{out}}:
\prod_{\ell=1}^N \underline{\mathbf{B}_{u_\ell}}
\to
\underline{\mathbf{O}_D}
\]
be the measurable map which selects the output-slot values listed in the external output profile of $D$, in the order specified by that profile. The trace kernel of $D$ with respect to the topological ordering of $V(D)$ is the Markov kernel $K_D^{u_1,\ldots,u_N}:\mathbf{I}_D\to \mathbf{O}_D$ defined by
\[
K_D^{u_1,\ldots,u_N}(E\mid \mathbf{x})
=
\int
\mathbf{1}_E
\left(P_{\mathrm{out}}(\mathbf{y}_1,\ldots,\mathbf{y}_N)\right)
\,
\mu_D^{u_1,\ldots,u_N}
(d\mathbf{y}_1,\ldots,d\mathbf{y}_N\mid \mathbf{x})
\]
for every $E\in \Sigma_{\underline{\mathbf{O}_D}}$ and $\mathbf{x}\in \underline{\mathbf{I}_D}$.
\end{definition}

The measure $\mu_D^{u_1,\ldots,u_N}(\,\cdot\mid \mathbf{x})$ samples all vertex outputs of the diagram, including those carried by internal wires. The trace kernel $K_D^{u_1,\ldots,u_N}$ then forgets the internal output values and keeps only the values on the external output wires.

\begin{theorem}[Trace semantics for finite acyclic KSC diagrams]
\label{thm:trace_semantics_ksc}
Let $D$ be a finite acyclic KSC diagram in a Markov polycategory datum $\PC$. For every topological ordering $u_1,\ldots,u_N$ of $V(D)$, the trace kernel $K_D^{u_1,\ldots,u_N}:\mathbf{I}_D\to \mathbf{O}_D$ of Definition~\ref{def:trace_measure_ksc} is a Markov kernel. Moreover, if $u_1,\ldots,u_N$ and $u'_1,\ldots,u'_N$ are two topological orderings of $V(D)$, then the corresponding trace kernels are equal: $K_D^{u_1,\ldots,u_N} = K_D^{u'_1,\ldots,u'_N}$. Hence, a finite acyclic KSC diagram $D$ determines a unique Markov kernel $K_D:\mathbf{I}_D\to \mathbf{O}_D$.
\end{theorem}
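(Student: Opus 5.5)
We prove the two claims separately. First, that $\mu_D^{u_1,\ldots,u_N}(\cdot\mid\mathbf{x})$ is a well-defined probability measure depending measurably on $\mathbf{x}$. Second, that it is independent of the topological ordering up to the canonical coordinate permutation of $\prod_\ell \underline{\mathbf{B}_{u_\ell}}$. The output map $P_{\mathrm{out}}$ is compatible with this permutation, since it selects slot values by vertex labels rather than by position. Hence the trace kernels agree.

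For the first claim we argue by induction on $\ell$, using the standard product-kernel lemma. If $\nu$ is a kernel from $S$ to $T$ and $\kappa$ is a kernel from $S\times T$ to $U$, then $\nu\otimes\kappa$, defined by $(\nu\otimes\kappa)(F\mid s)=\int\int \mathbf{1}_F(t,u)\,\kappa(du\mid s,t)\,\nu(dt\mid s)$, is a Markov kernel from $S$ to $T\times U$. This follows by a monotone class argument: first treat measurable rectangles, then pass to all of $\Sigma_{T\times U}$ by Dynkin's $\pi$-$\lambda$ theorem, and use measurability of $s\mapsto\int f\,d\nu(\cdot\mid s)$ for bounded measurable $f$.

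We apply the lemma at step $\ell$ with $S=\underline{\mathbf{I}_D}$ and $T=\prod_{s<\ell}\underline{\mathbf{B}_{u_s}}$. The kernel $\kappa$ is $k_{u_\ell}(\cdot\mid R_\ell(\cdot))$, which is a kernel because $R_\ell$ is measurable: it is a coordinate projection and rearrangement map. This shows that $\mu_D$ is a Markov kernel from $\mathbf{I}_D$ to $\prod_\ell\underline{\mathbf{B}_{u_\ell}}$. Pushing forward along the measurable map $P_{\mathrm{out}}$ then gives a Markov kernel, which settles the first claim.

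For the second claim, we use the fact that any two topological orderings of a finite DAG are connected by a finite sequence of adjacent transpositions. Each transposition swaps $u_\ell,u_{\ell+1}$ with no edge between them. This is a standard combinatorial fact: bubble-sort one ordering into the other, and observe that every intermediate ordering remains topological. It therefore suffices to treat a single such swap. Both iterated integrals agree on the first $\ell-1$ factors and on the last $N-\ell-1$ factors. For the last factors, the input maps $R_s$ for $s>\ell+1$ depend on the $\mathbf{y}$'s only through their vertex labels, so they are unaffected by the swap.

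Since there is no wire between $u_\ell$ and $u_{\ell+1}$, the input $R_\ell$ does not depend on $\mathbf{y}_{\ell+1}$, and $R_{\ell+1}$ does not depend on $\mathbf{y}_\ell$. The two middle factors thus form the kernel $k_{u_\ell}(d\mathbf{y}_\ell\mid g(\cdot))\,k_{u_{\ell+1}}(d\mathbf{y}_{\ell+1}\mid h(\cdot))$, in which neither argument involves the other's output. This is a product of two kernels in the earlier variables. By Fubini–Tonelli for probability measures, applied to bounded measurable integrands, the order of the two integrations can be exchanged. The two full iterated integrals then agree on all bounded measurable test functions, and hence on all sets $E$.

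The main obstacle is bookkeeping rather than analysis. We must state precisely that $R_\ell$ and $P_{\mathrm{out}}$ are defined in terms of vertex labels, so that they transform correctly under the coordinate permutation. We must also check that the independence conditions really follow from the absence of an edge between $u_\ell$ and $u_{\ell+1}$. Standard Borelness is not strictly needed here beyond guaranteeing product measurability. Uniqueness of $K_D$ follows because topological orderings exist for every finite acyclic graph.
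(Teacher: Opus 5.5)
Your proposal is correct and follows essentially the same route as the paper. You prove the Markov-kernel property by iterating kernels: you package this as the standard semidirect-product kernel lemma, while the paper runs an induction on bounded measurable integrands. You then prove ordering-independence on the labelled product space by reducing to adjacent swaps of incomparable vertices and applying Fubini--Tonelli to the two middle integrations, with the later kernels absorbed into a bounded measurable integrand, exactly as the paper does.
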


\begin{example}[Why slot indices are part of the data]
\label{ex:slot_indices_part_of_data}
Let $X$ be an object with underlying standard Borel space $(\mathbb{R},\mathcal{B}(\mathbb{R}))$. Consider two nullary morphisms $k_1:()\to (X),~k_2:()\to (X)$, interpreted as Markov kernels $k_1:(\Sigma_{\underline{X}},*)\to [0,1],~k_2:(\Sigma_{\underline{X}},*)\to [0,1]$, where $*$ is the one-point space. Let $k_1(dx\mid *)=\delta_1(dx),~k_2(dx\mid *)=\delta_2(dx)$. Let $\ell:(X,X)\to (X)$ be the deterministic kernel $\ell(dy\mid (x_1,x_2))=\delta_{10x_1+x_2}(dy)$. Thus, the underlying deterministic function sends $(1,2)$ to $12$ and sends $(2,1)$ to $21$.

Let $D$ be the finite acyclic KSC diagram obtained by connecting the output of $k_1$ to the first input slot of $\ell$, and the output of $k_2$ to the second input slot of $\ell$. The diagram has empty external input profile and external output profile $(X)$. Its trace kernel $K_D:()\to (X)$ satisfies
\[
K_D(E\mid *)
=
\int_{\mathbb{R}}
\int_{\mathbb{R}}
\int_{\mathbb{R}}
\mathbf{1}_E(y)\,
\ell(dy\mid (x_1,x_2))\,
\delta_1(dx_1)\,
\delta_2(dx_2)
=
\delta_{12}(E)
\]
for every $E\in \mathcal{B}(\mathbb{R})$. This value is independent of whether the topological evaluation order evaluates $k_1$ before $k_2$ or $k_2$ before $k_1$. The input map for $\ell$ uses the specified slot indices, so the first input of $\ell$ is the output of $k_1$ and the second input of $\ell$ is the output of $k_2$. If evaluation order were incorrectly used to assemble the input tuple of $\ell$, then the order $(k_2,k_1,\ell)$ would give the input $(2,1)$ and the output $21$. Thus, slot indices are part of the data of the diagram.
\end{example}

\subsection{Structural laws for KSC}
\label{subsec:structural_laws_ksc}

We now relate the binary formula of Definition~\ref{def:ksc} to the trace semantics of Subsection~\ref{subsec:trace_semantics_ksc}. The structural laws for KSC then follow from Theorem~\ref{thm:trace_semantics_ksc}.

\begin{proposition}[Binary KSC agrees with trace semantics]
\label{prop:binary_ksc_trace_semantics}
Let $k:\mathbf{A}\to \mathbf{B}$ and $l:\mathbf{C}\to \mathbf{D}$ be morphisms in a Markov polycategory datum $\PC$. Suppose that $B_i=C_j$. Let $D_{k,l}^{i,j}$ be the finite acyclic KSC diagram with two vertices labelled by $k$ and $l$, and with one internal wire connecting the $i$-th output slot of $k$ to the $j$-th input slot of $l$. Give $D_{k,l}^{i,j}$ the external input profile $\boldsymbol{\Gamma}$ and external output profile $\boldsymbol{\Delta}$ of Definition~\ref{def:ksc}. Then the trace kernel of $D_{k,l}^{i,j}$ is the kernel $l\circ_{(i,j)}k$ of Definition~\ref{def:ksc}.
\end{proposition}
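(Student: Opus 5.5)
The plan is to compute the trace kernel of $D_{k,l}^{i,j}$ directly from Definition~\ref{def:trace_measure_ksc}, using the particular topological ordering $(u_1,u_2)=(k,l)$, and to match it term by term with the integral in Definition~\ref{def:ksc}. The ordering $(k,l)$ is the only topological ordering of $V(D_{k,l}^{i,j})$, because the single internal wire gives an edge $k\to l$. Theorem~\ref{thm:trace_semantics_ksc} makes the trace kernel independent of the ordering anyway, so no choice is being made here. The first step is to verify that $D_{k,l}^{i,j}$ is a finite acyclic KSC diagram. The compatibility condition is $B_i=C_j$. Linearity is immediate from there being one wire. Acyclicity follows since the only edge is $k\to l$.

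The second step identifies the external slots and the two input maps. The external input slots are all input slots of $k$, together with the input slots $(l,q)$ for $q\neq j$. With the prescribed ordering, the external input profile is $\boldsymbol{\Gamma}$, and every point of $\underline{\boldsymbol{\Gamma}}$ can be written uniquely as $\mathbf{x}=V_j(\mathbf{a},\mathbf{c}_{-j})$, since $V_j$ is a measurable bijection. The first input map is $R_1(\mathbf{x})=\mathbf{a}$. The second input map fills slot $j$ of $l$ with the $i$-th output of $k$ and the remaining slots with $\mathbf{c}_{-j}$, so
\[
R_2(\mathbf{x},\mathbf{b})=T_j\bigl(\mathbf{c}_{-j},\pi_i(\mathbf{b})\bigr).
\]
The trace measure is then the iterated integral $k(d\mathbf{b}\mid \mathbf{a})\,l(d\mathbf{d}\mid T_j(\mathbf{c}_{-j},\pi_i(\mathbf{b})))$ on $\underline{\mathbf{B}}\times\underline{\mathbf{D}}$.

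The third step concerns the outputs. The external output slots are $(k,p)$ for $p\neq i$ together with all output slots of $l$. With the ordering prescribed by $\boldsymbol{\Delta}$, the selection map is
\[
P_{\mathrm{out}}(\mathbf{b},\mathbf{d})=U_i\bigl(\pi_{-i}(\mathbf{b}),\mathbf{d}\bigr).
\]
Substituting $R_1$, $R_2$ and $P_{\mathrm{out}}$ into the formula for $K_D^{k,l}(E\mid\mathbf{x})$ gives
\[
\int_{\underline{\mathbf{B}}}\left(\int_{\underline{\mathbf{D}}}\mathbf{1}_E\bigl(U_i(\pi_{-i}(\mathbf{b}),\mathbf{d})\bigr)\,l\bigl(d\mathbf{d}\mid T_j(\mathbf{c}_{-j},\pi_i(\mathbf{b}))\bigr)\right)k(d\mathbf{b}\mid\mathbf{a}).
\]
This is exactly the expression in Definition~\ref{def:ksc}. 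The inner function of $\mathbf{b}$ is measurable by the standard kernel-integration lemma, so the iterated integral is well defined.

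The main point needing care is bookkeeping rather than analysis. One must check that the orderings of external slots assigned to $D_{k,l}^{i,j}$ really make the input map and the output selection coincide with $V_j$, $T_j$ and $U_i$. In particular, $R_2$ must respect slot indices and not evaluation order, which is the phenomenon illustrated in Example~\ref{ex:slot_indices_part_of_data}. I would make this explicit by listing the slot positions: $(l,1),\ldots,(l,j-1)$, then $(k,1),\ldots,(k,m)$, then $(l,j+1),\ldots,(l,q)$ for inputs, and analogously for outputs.
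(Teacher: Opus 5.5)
Your proposal is correct and follows the paper's proof: both evaluate the trace kernel with $k$ first and $l$ second, identify the input of $l$ as $T_j(\mathbf{c}_{-j},\pi_i(\mathbf{b}))$ and the output selection as $U_i(\pi_{-i}(\mathbf{b}),\mathbf{d})$, and read off the integral of Definition~\ref{def:ksc}. Your extra checks are harmless additions that the paper leaves implicit: that the diagram is well formed, that $(k,l)$ is the only topological ordering, and the explicit slot bookkeeping.
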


\begin{proof}
The diagram $D_{k,l}^{i,j}$ has the topological ordering in which the vertex labelled by $k$ comes first and the vertex labelled by $l$ comes second. Let the external input be $V_j(\mathbf{a},\mathbf{c}_{-j})\in \underline{\boldsymbol{\Gamma}}$.

The first vertex has input $\mathbf{a}$ and output $\mathbf{b}\in \underline{\mathbf{B}}$, distributed according to $k(d\mathbf{b}\mid \mathbf{a})$. The second vertex receives as input the tuple $T_j(\mathbf{c}_{-j},\pi_i(\mathbf{b}))\in \underline{\mathbf{C}}$. Its output $\mathbf{d}\in \underline{\mathbf{D}}$ is therefore distributed according to $l(d\mathbf{d}\mid T_j(\mathbf{c}_{-j},\pi_i(\mathbf{b})))$. The external output selection map sends $(\mathbf{b},\mathbf{d})$ to $U_i(\pi_{-i}(\mathbf{b}),\mathbf{d})\in \underline{\boldsymbol{\Delta}}$.

Hence, the trace kernel of $D_{k,l}^{i,j}$ assigns to $E\in \Sigma_{\underline{\boldsymbol{\Delta}}}$ the value
\[
\int_{\underline{\mathbf{B}}}
\left(
\int_{\underline{\mathbf{D}}}
\mathbf{1}_E\left(U_i(\pi_{-i}(\mathbf{b}),\mathbf{d})\right)
\,l(d\mathbf{d}\mid T_j(\mathbf{c}_{-j},\pi_i(\mathbf{b})))
\right)
k(d\mathbf{b}\mid \mathbf{a}).
\]
This is the formula in Definition~\ref{def:ksc}. Therefore, the trace kernel of $D_{k,l}^{i,j}$ is $l\circ_{(i,j)}k$.
\end{proof}

\begin{corollary}[Well-definedness of KSC]
\label{cor:ksc_well_defined}
Let $k:\mathbf{A}\to \mathbf{B}$ and $l:\mathbf{C}\to \mathbf{D}$ be morphisms in a Markov polycategory datum $\PC$. If $B_i=C_j$, then the kernel slotwise composite $l\circ_{(i,j)}k:\boldsymbol{\Gamma}\to \boldsymbol{\Delta}$ of Definition~\ref{def:ksc} is a Markov kernel.
\end{corollary}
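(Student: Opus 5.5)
The corollary follows by combining Proposition~\ref{prop:binary_ksc_trace_semantics} with the first assertion of Theorem~\ref{thm:trace_semantics_ksc}. Given $k:\mathbf{A}\to\mathbf{B}$ and $l:\mathbf{C}\to\mathbf{D}$ with $B_i=C_j$, we form the two-vertex diagram $D_{k,l}^{i,j}$. We then check that it is a finite acyclic KSC diagram in the sense of Definition~\ref{def:finite_acyclic_ksc_diagram}:
\begin{enumerate}
\item it has two vertices;
\item its single internal wire $((k,i),(l,j))$ is admissible because $B_i=C_j$;
\item linearity holds trivially, since only one slot on each side is used;
\item the graph has the single edge $k\to l$, so it is acyclic.
\end{enumerate}
The external input slots are the $m$ inputs of $k$ together with the $q-1$ remaining inputs of $l$. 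We order them so that the input profile is $\boldsymbol{\Gamma}$. The external output slots are the $r$ outputs of $l$ together with the $n-1$ remaining outputs of $k$. We order them so that the output profile is $\boldsymbol{\Delta}$.

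Theorem~\ref{thm:trace_semantics_ksc} then says that the trace kernel $K_{D_{k,l}^{i,j}}:\boldsymbol{\Gamma}\to\boldsymbol{\Delta}$ is a Markov kernel. Proposition~\ref{prop:binary_ksc_trace_semantics} identifies this trace kernel, computed with the topological ordering $(k,l)$, with $l\circ_{(i,j)}k$ as given by Definition~\ref{def:ksc}. Together these show that $l\circ_{(i,j)}k$ is a Markov kernel.

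The only subtlety is a bookkeeping point. The formula in Definition~\ref{def:ksc} is written at inputs of the form $V_j(\mathbf{a},\mathbf{c}_{-j})$. We should observe that $V_j$ is a bijection, and indeed a measurable isomorphism, from $\underline{\mathbf{A}}\times\underline{\mathbf{C}_{-j}}$ onto $\underline{\boldsymbol{\Gamma}}$, so the definition specifies the kernel at every point of $\underline{\boldsymbol{\Gamma}}$.

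If we did not want to rely on the theorem, a direct argument is also short. For fixed $\mathbf{x}$, the inner integrand $\mathbf{b}\mapsto\int \mathbf{1}_E(U_i(\pi_{-i}(\mathbf{b}),\mathbf{d}))\,l(d\mathbf{d}\mid T_j(\mathbf{c}_{-j},\pi_i(\mathbf{b})))$ is jointly measurable in $(\mathbf{c}_{-j},\mathbf{b})$. This follows from the standard fact that $(z,\mathbf{d})\mapsto f(z,\mathbf{d})$ measurable and $l$ a kernel imply $z\mapsto\int f(z,\mathbf{d})\,l(d\mathbf{d}\mid z)$ measurable, which is proved by a monotone class argument starting from rectangles. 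Integrating this measurable, $[0,1]$-valued function against $k(d\mathbf{b}\mid\mathbf{a})$ gives a function measurable in $(\mathbf{a},\mathbf{c}_{-j})$. Countable additivity in $E$ follows from monotone convergence, and total mass $1$ follows from taking $E=\underline{\boldsymbol{\Delta}}$.

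This measurability step, via the monotone class theorem, is the only genuine obstacle. In the main proof it is absorbed entirely into Theorem~\ref{thm:trace_semantics_ksc}.
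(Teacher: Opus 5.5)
Your proposal is correct and follows the paper's proof: Proposition~\ref{prop:binary_ksc_trace_semantics} identifies $l\circ_{(i,j)}k$ with the trace kernel of $D_{k,l}^{i,j}$, and Theorem~\ref{thm:trace_semantics_ksc} shows that this trace kernel is a Markov kernel. Your other points are also correct and only make explicit what the paper leaves implicit: the check that $D_{k,l}^{i,j}$ is a valid diagram, the remark that $V_j$ is a measurable coordinate rearrangement, and the optional direct monotone-class argument, which is the same measurability fact used inside the proof of Theorem~\ref{thm:trace_semantics_ksc}.
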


\begin{proof}
By Proposition~\ref{prop:binary_ksc_trace_semantics}, the binary KSC $l\circ_{(i,j)}k$ is the trace kernel of the finite acyclic diagram $D_{k,l}^{i,j}$. By Theorem~\ref{thm:trace_semantics_ksc}, every finite acyclic KSC diagram determines a Markov kernel. Hence $l\circ_{(i,j)}k$ is a Markov kernel.
\end{proof}

\begin{corollary}[Unitality of KSC]
\label{cor:ksc_unitality}
Let $h:\mathbf{A}\to \mathbf{B}$ be a morphism in a Markov polycategory datum $\PC$, where $\mathbf{A}=(A_1,\ldots,A_m)$ and $\mathbf{B}=(B_1,\ldots,B_n)$. Then KSC satisfies the following unit laws. For every input slot $j\in \{1,\ldots,m\}$, $h\circ_{(1,j)}\id_{A_j}=h$. For every output slot $i\in \{1,\ldots,n\}$, $\id_{B_i}\circ_{(i,1)}h=h$.
\end{corollary}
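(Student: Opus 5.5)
The plan is to prove both unit laws directly from Definition~\ref{def:ksc}, using only the Dirac property of the identity kernel. Proposition~\ref{prop:binary_ksc_trace_semantics} and Theorem~\ref{thm:trace_semantics_ksc} could give a diagrammatic proof, but a two-vertex diagram with an identity vertex is not literally the one-vertex diagram, so the integral computation is the cleanest route. First I check the profiles. For $h\circ_{(1,j)}\id_{A_j}$ we have $k=\id_{A_j}$ with $\mathbf{A}=(A_j)$, $\mathbf{B}=(A_j)$, $i=1$, and $l=h$. The insertion convention then gives input profile $(A_1,\ldots,A_{j-1},A_j,A_{j+1},\ldots,A_m)=\mathbf{A}$ and output profile $\mathbf{B}$, since $\mathbf{B}_{-1}$ is empty. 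Symmetrically, $\id_{B_i}\circ_{(i,1)}h$ has input profile $\mathbf{A}$ (because $\mathbf{C}_{-1}$ is empty) and output profile $(B_1,\ldots,B_{i-1},B_i,B_{i+1},\ldots,B_n)=\mathbf{B}$. So both sides have identical ordered signatures, and it remains to compare the kernels pointwise.

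For the input unit law, fix $E\in\Sigma_{\underline{\mathbf{B}}}$ and an input $V_j(a_j,\mathbf{a}_{-j})=\mathbf{a}$. Definition~\ref{def:ksc} gives the outer integral over $b\in\underline{A_j}$ against $\delta_{a_j}(db)$. The integrand is $\int \mathbf{1}_E(U_1((),\mathbf{d}))\,h(d\mathbf{d}\mid T_j(\mathbf{a}_{-j},b))=h(E\mid T_j(\mathbf{a}_{-j},b))$, because $U_1$ with empty $\mathbf{B}_{-1}$ is the identity. The outer integrand $b\mapsto h(E\mid T_j(\mathbf{a}_{-j},b))$ is measurable, since it is a measurable function composed with the measurable insertion map $T_j$. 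Integrating it against $\delta_{a_j}$ therefore yields $h(E\mid T_j(\mathbf{a}_{-j},a_j))=h(E\mid\mathbf{a})$.

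For the output unit law, fix $E$ and $\mathbf{a}$; here $V_1$ is the identity. The inner integral is $\int\mathbf{1}_E(U_i(\pi_{-i}(\mathbf{b}),d))\,\delta_{b_i}(dd)=\mathbf{1}_E(U_i(\pi_{-i}(\mathbf{b}),b_i))=\mathbf{1}_E(\mathbf{b})$, because reinserting $b_i$ at position $i$ recovers $\mathbf{b}$. The outer integral is then $\int\mathbf{1}_E(\mathbf{b})\,h(d\mathbf{b}\mid\mathbf{a})=h(E\mid\mathbf{a})$.

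The main point needing care is bookkeeping rather than analysis. I must check that $T_j(\mathbf{a}_{-j},a_j)=\mathbf{a}$ and $U_i(\pi_{-i}(\mathbf{b}),b_i)=\mathbf{b}$ hold as ordered tuples, which is exactly where the insertion convention of Subsection~\ref{subsec:ordered_polycategories} is used. I must also justify the measurability needed to evaluate Dirac integrals. That follows because the maps $T_j$, $U_i$ and $\pi_{-i}$ are measurable coordinate maps on product standard Borel spaces.
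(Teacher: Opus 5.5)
Your proposal is correct and is essentially the paper's own proof. Both check the ordered profiles via the insertion convention and then evaluate the Dirac integral in Definition~\ref{def:ksc}: the outer integral against $\delta_{a_j}$ for the input law, and the inner integral against $\delta_{b_i}$ for the output law. Your explicit measurability check for $b\mapsto h(E\mid T_j(\mathbf{a}_{-j},b))$ is a small addition that the paper leaves implicit.
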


\begin{proposition}[Compatibility of KSC with trace semantics]
\label{prop:ksc_compatible_trace_semantics}
Let $D_1$ and $D_2$ be finite acyclic KSC diagrams in a Markov polycategory datum $\PC$. Suppose that the $i$-th external output object of $D_1$ is equal to the $j$-th external input object of $D_2$. Let $D$ be the finite acyclic diagram obtained by connecting this output slot of $D_1$ to this input slot of $D_2$, with external input and output profiles formed by the insertion convention of Definition~\ref{def:ksc}. Then $K_D = K_{D_2}\circ_{(i,j)}K_{D_1}$, where the right-hand side denotes the KSC integral formula of Definition~\ref{def:ksc} applied to the trace kernels $K_{D_1}$ and $K_{D_2}$.
\end{proposition}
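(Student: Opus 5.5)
The plan is to use the freedom in the choice of topological ordering that Theorem~\ref{thm:trace_semantics_ksc} provides. Let $u_1,\ldots,u_N$ be a topological ordering of $V(D_1)$ and $v_1,\ldots,v_M$ one of $V(D_2)$. Since $D$ adds only one internal wire, and it runs from a vertex of $D_1$ to a vertex of $D_2$, the concatenation $u_1,\ldots,u_N,v_1,\ldots,v_M$ is a topological ordering of $V(D)$. By Theorem~\ref{thm:trace_semantics_ksc}, $K_D$ may be computed with this ordering. The aim is to show that the resulting iterated integral is exactly the KSC formula of Definition~\ref{def:ksc} with $k=K_{D_1}$ and $l=K_{D_2}$.

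First I will write the external input of $D$ as $V_j(\mathbf{a},\mathbf{c}_{-j})$, where $\mathbf{a}\in\underline{\mathbf{I}_{D_1}}$ and $\mathbf{c}_{-j}$ fills the remaining external inputs of $D_2$. I will then check how the input maps behave. For $\ell\le N$, the input map $R_\ell$ of $D$ agrees with the input map of $D_1$ evaluated at $\mathbf{a}$. For the later vertices $v_s$, the input map of $D$ agrees with that of $D_2$ evaluated at $T_j(\mathbf{c}_{-j},z)$. Here $z=\pi_i(P^{D_1}_{\mathrm{out}}(\mathbf{y}_1,\ldots,\mathbf{y}_N))$ is the value on the connected output slot of $D_1$.

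Next I will factor the output selection map: $P^D_{\mathrm{out}}(\mathbf{y},\mathbf{w})=U_i\bigl(\pi_{-i}(P^{D_1}_{\mathrm{out}}(\mathbf{y})),P^{D_2}_{\mathrm{out}}(\mathbf{w})\bigr)$. This is the point where the insertion convention on profiles is used.

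With these identifications, Fubini/Tonelli for iterated kernel integrals of nonnegative measurable functions splits the trace integral of $D$ into two parts:
\[
\int \Bigl(\int \mathbf{1}_E\bigl(U_i(\pi_{-i}(P^{D_1}_{\mathrm{out}}(\mathbf{y})),P^{D_2}_{\mathrm{out}}(\mathbf{w}))\bigr)\,\mu_{D_2}(d\mathbf{w}\mid T_j(\mathbf{c}_{-j},z(\mathbf{y})))\Bigr)\,\mu_{D_1}(d\mathbf{y}\mid\mathbf{a}).
\]
The inner integral depends on $\mathbf{y}$ only through $\mathbf{b}=P^{D_1}_{\mathrm{out}}(\mathbf{y})$. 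Its $\mathbf{w}$-integration is a pushforward along $P^{D_2}_{\mathrm{out}}$, so it equals
\[
\int \mathbf{1}_E\bigl(U_i(\pi_{-i}(\mathbf{b}),\mathbf{d})\bigr)\,K_{D_2}(d\mathbf{d}\mid T_j(\mathbf{c}_{-j},\pi_i(\mathbf{b}))).
\]
This is a measurable function of $\mathbf{b}$, because $K_{D_2}$ is a Markov kernel by Theorem~\ref{thm:trace_semantics_ksc} and the usual section-measurability lemma for kernels applies. The change of variables formula for the pushforward of $\mu_{D_1}$ under $P^{D_1}_{\mathrm{out}}$, which by definition is $K_{D_1}$, then turns the outer integral into an integral against $K_{D_1}(d\mathbf{b}\mid\mathbf{a})$. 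The result is the formula of Definition~\ref{def:ksc}.

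I expect the main obstacle to be bookkeeping rather than analysis. The delicate part is verifying rigorously that the input maps of $D$ restrict to those of $D_1$ and $D_2$ and that the output map factors as claimed, while keeping track of the slot orderings. The measure-theoretic steps only need Tonelli and the measurability of $(\mathbf{x},\mathbf{b})\mapsto\int f(\mathbf{x},\mathbf{b},\mathbf{d})\,\kappa(d\mathbf{d}\mid\cdot)$ for product-measurable $f\ge 0$. That measurability is standard for kernels between standard Borel spaces, and I will first establish it for indicators of rectangles and then extend it by a monotone class argument.
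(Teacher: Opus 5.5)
Your proposal is correct and follows essentially the same route as the paper's proof: concatenate topological orderings of $D_1$ and $D_2$, use Theorem~\ref{thm:trace_semantics_ksc} for independence of the ordering, and identify the resulting iterated integral with the KSC formula applied to $K_{D_1}$ and $K_{D_2}$. Your factorization of $P^D_{\mathrm{out}}$ through $U_i$, together with the two pushforward (change-of-variables) steps and the measurability check on the inner integral, makes rigorous what the paper states in one line as ``integrating over the internal wires of $D_1$ gives the kernel $K_{D_1}$.''
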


To state associativity and interchange, we distinguish diagrams from their binary evaluations. A finite acyclic KSC diagram has a trace kernel by Theorem~\ref{thm:trace_semantics_ksc}. An iterated KSC expression is auxiliary syntax recording one way of evaluating such a diagram by successive binary KSC operations.

\begin{definition}[Iterated KSC expression]
\label{def:iterated_ksc_expression}
Let $D$ be a finite acyclic KSC diagram. An iterated KSC expression over $D$ is defined inductively. Each such expression $E$ has an associated subdiagram $D_E$ of $D$ and an evaluation kernel $\mathbf{I}_{D_E}\to \mathbf{O}_{D_E}$.

First, for every vertex $u\in V(D)$, the single vertex $u$ is an iterated KSC expression. The associated subdiagram $D_u$ is the one-vertex subdiagram labelled by $k_u:\mathbf{A}_u\to \mathbf{B}_u$. The evaluation kernel of this expression is $k_u$.

Second, let $E_1$ and $E_2$ be iterated KSC expressions over $D$ whose associated subdiagrams $D_{E_1}$ and $D_{E_2}$ have disjoint vertex sets. Suppose that an internal wire $e=((u,p),(v,q))$ of $D$ connects an external output slot of $D_{E_1}$ to an external input slot of $D_{E_2}$. Let $r$ be the position of this external output slot in the ordered output profile $\mathbf{O}_{D_{E_1}}$, and let $s$ be the position of this external input slot in the ordered input profile $\mathbf{I}_{D_{E_2}}$. Then a new iterated KSC expression $E$ is formed from the data $(E_1,E_2,e)$. Its associated subdiagram $D_E$ is obtained by connecting $D_{E_1}$ to $D_{E_2}$ along the wire $e$, with external profiles formed by the insertion convention of Definition~\ref{def:ksc}. Its evaluation kernel is the KSC composite $H_2\circ_{(r,s)}H_1$, where $H_1:\mathbf{I}_{D_{E_1}}\to \mathbf{O}_{D_{E_1}}$ and $H_2:\mathbf{I}_{D_{E_2}}\to \mathbf{O}_{D_{E_2}}$ are the evaluation kernels of $E_1$ and $E_2$, respectively.

An iterated KSC expression $E$ realizes $D$ if $D_E=D$, including the ordered external input and output profiles.
\end{definition}

\begin{corollary}[Associativity and interchange for KSC]
\label{cor:ksc_associativity_interchange}
Let $D$ be a finite acyclic KSC diagram in a Markov polycategory datum $\PC$. If $E$ is an iterated KSC expression realizing $D$, then the evaluation kernel of $E$ is the trace kernel $K_D$. Consequently, any two iterated KSC expressions realizing the same diagram $D$ have the same evaluation kernel. In this sense, KSC satisfies the associativity and interchange laws for ordered polycategorical composition.
\end{corollary}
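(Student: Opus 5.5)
The plan is to argue by structural induction on the iterated KSC expression $E$ of Definition~\ref{def:iterated_ksc_expression}. The claim to prove is slightly stronger than the statement: for every iterated KSC expression $E$ over $D$, whether or not it realizes $D$, the evaluation kernel of $E$ equals the trace kernel $K_{D_E}$ of its associated subdiagram $D_E$. This subdiagram carries the ordered external profiles produced by the insertion convention. The corollary is then the special case $D_E=D$. The second sentence of the statement follows at once, because both evaluation kernels equal the single kernel $K_D$, which is well defined by Theorem~\ref{thm:trace_semantics_ksc}.

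For the base case, $E$ is a single vertex $u$. Its subdiagram $D_u$ has one vertex, no internal wires, and profiles $\mathbf{A}_u$ and $\mathbf{B}_u$. With the unique topological ordering, the trace measure is $k_u(d\mathbf{y}_1\mid \mathbf{x})$. Here $R_1$ is the identity on $\underline{\mathbf{A}_u}$ and $P_{\mathrm{out}}$ is the identity on $\underline{\mathbf{B}_u}$. Hence $K_{D_u}=k_u$, which is the evaluation kernel.

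For the inductive step, let $E$ be formed from $(E_1,E_2,e)$ with evaluation kernels $H_1$ and $H_2$. By the induction hypothesis, $H_1=K_{D_{E_1}}$ and $H_2=K_{D_{E_2}}$. By construction, $D_E$ is exactly the diagram obtained by connecting the $r$-th external output slot of $D_{E_1}$ to the $s$-th external input slot of $D_{E_2}$, with profiles given by the insertion convention of Definition~\ref{def:ksc}. Proposition~\ref{prop:ksc_compatible_trace_semantics} therefore gives
\[
K_{D_E}=K_{D_{E_2}}\circ_{(r,s)}K_{D_{E_1}}=H_2\circ_{(r,s)}H_1,
\]
and the right-hand side is the evaluation kernel of $E$.

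The main obstacle is to check that the hypotheses of Proposition~\ref{prop:ksc_compatible_trace_semantics} genuinely hold at each step. There are two points to verify.
\begin{enumerate}
\item The glued object $D_E$ must be a finite acyclic KSC diagram. Linearity is inherited from $D$, since $e$ is a wire of $D$ joining two slots that are external in the respective subdiagrams. Acyclicity is inherited because $D_E$ is a subdiagram of the acyclic $D$. Object equality along $e$ holds because $B_{u,p}=A_{v,q}$ in $D$.
\item The profiles must agree. The ordered profiles assigned to $D_E$ in Definition~\ref{def:iterated_ksc_expression} must coincide with those the proposition uses, namely the insertion-convention profiles built from the positions $r$ and $s$. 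This holds by definition, but it should be stated explicitly, because the equality $K_{D_E}=K_D$ at the end depends on $D_E=D$ including the ordered profiles.
\end{enumerate}
The final interpretive sentence of the corollary needs no further argument. Associativity and interchange are exactly the assertion that different bracketings of binary composites realizing one diagram give the same kernel, and this is what has been shown.
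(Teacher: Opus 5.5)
Your proposal is correct and follows the paper's proof essentially verbatim. Both argue by structural induction on $E$ that the evaluation kernel equals $K_{D_E}$, with the single-vertex base case, the inductive step from Proposition~\ref{prop:ksc_compatible_trace_semantics}, and then the specialization to $D_E=D$; your explicit checks that $D_E$ is a valid finite acyclic KSC diagram with matching insertion-convention profiles are details the paper leaves implicit, and they do not change the argument.
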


\begin{proof}
We prove the statement by induction on the construction of the iterated KSC expression $E$.

If $E$ is a single vertex $u$, then $D_E$ is the one-vertex diagram labelled by $k_u$. The evaluation kernel of $E$ is $k_u$. The trace kernel of $D_E$ is also $k_u$. Hence the evaluation kernel of $E$ is $K_{D_E}$.

For the induction step, suppose that $E$ is formed from the data $(E_1,E_2,e)$, where $e$ connects the $r$-th external output slot of $D_{E_1}$ to the $s$-th external input slot of $D_{E_2}$. Let $H_1$ and $H_2$ be the evaluation kernels of $E_1$ and $E_2$, respectively. By definition, the evaluation kernel of $E$ is $H_2\circ_{(r,s)}H_1$. By the induction hypothesis, $H_1=K_{D_{E_1}}$, $H_2=K_{D_{E_2}}$. Therefore, the evaluation kernel of $E$ is $K_{D_{E_2}}\circ_{(r,s)}K_{D_{E_1}}$. By Proposition~\ref{prop:ksc_compatible_trace_semantics}, this kernel is the trace kernel of the subdiagram obtained by connecting $D_{E_1}$ and $D_{E_2}$ along $e$. This subdiagram is $D_E$. Hence, the evaluation kernel of $E$ is $K_{D_E}$.

If $E$ realizes $D$, then $D_E=D$, so the evaluation kernel of $E$ is $K_D$. If $E$ and $E'$ are two iterated KSC expressions realizing the same diagram $D$, then both evaluation kernels are equal to $K_D$. This proves the claim.
\end{proof}

\begin{definition}[Markov polycategory]
\label{def:markov_polycategory}
A Markov polycategory is a Markov polycategory datum $\PC$ such that, if $k:\mathbf{A}\to \mathbf{B}$ and $l:\mathbf{C}\to \mathbf{D}$ are morphisms of $\PC$ and $B_i=C_j$, then the KSC kernel $l\circ_{(i,j)}k$ of Definition~\ref{def:ksc} belongs to $\Hom_{\PC}(\boldsymbol{\Gamma};\boldsymbol{\Delta})$, where $\boldsymbol{\Gamma}$ and $\boldsymbol{\Delta}$ are the ordered input and output profiles of Definition~\ref{def:ksc}.

The slotwise composition operation of a Markov polycategory is this KSC operation. The identity morphisms are the identity Markov kernels.
\end{definition}

\begin{corollary}[KSC gives an ordered polycategory of Markov kernels]
\label{cor:ksc_gives_markov_polycategory}
Let $\PC$ be a Markov polycategory (Definition~\ref{def:markov_polycategory}). Then $\PC$, equipped with KSC and identity kernels, satisfies the unit, associativity, and interchange axioms of an ordered polycategory.
\end{corollary}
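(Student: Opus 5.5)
The plan is to assemble this corollary from the results already established in this section. No new measure-theoretic work is needed. There are three things to check: closure of the hom-sets, the unit laws, and the associativity and interchange laws. The identities of $\PC$ are the identity kernels by Definition~\ref{def:markov_polycategory_data}.

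First, the composition operation must be well defined. By Corollary~\ref{cor:ksc_well_defined}, each binary KSC composite $l\circ_{(i,j)}k$ with $B_i=C_j$ is a Markov kernel $\boldsymbol{\Gamma}\to\boldsymbol{\Delta}$. By Definition~\ref{def:markov_polycategory}, it lies in $\Hom_{\PC}(\boldsymbol{\Gamma};\boldsymbol{\Delta})$. Its profiles follow the insertion convention of Subsection~\ref{subsec:ordered_polycategories}. Iterating, every iterated KSC expression built from morphisms of $\PC$ (Definition~\ref{def:iterated_ksc_expression}) has an evaluation kernel that is again a morphism of $\PC$. This follows by a trivial induction on the construction of the expression.

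Second, the unit axioms are exactly Corollary~\ref{cor:ksc_unitality}, applied to any $h\in\Hom_{\PC}(\mathbf{A};\mathbf{B})$. Since the identity kernels belong to $\PC$, these identities hold inside $\PC$.

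Third, associativity and interchange are handled using the formulation recalled in Subsection~\ref{subsec:ordered_polycategories}: any two iterated composites with the same finite acyclic wiring diagram and the same ordered external profiles are equal. Given two such iterated composites in $\PC$, I will read each as an iterated KSC expression realizing a common finite acyclic KSC diagram $D$. The vertices of $D$ are the morphisms being composed, its internal wires are the slot pairs used, and its ordered external profiles are fixed by the insertion convention. Corollary~\ref{cor:ksc_associativity_interchange} then gives that both evaluation kernels equal $K_D$, which is unique by Theorem~\ref{thm:trace_semantics_ksc}.

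The main point requiring care is this translation step. I must ensure that the two syntactic composites determine literally the same diagram, including the same ordered external profiles, so that both genuinely realize one $D$. For the standard sequential and parallel associativity squares and the interchange law, I would check directly that the two sides have identical insertion-convention profiles. This is a finite bookkeeping check on the index formulas for $\boldsymbol{\Gamma}$ and $\boldsymbol{\Delta}$. With that check done, the corollary follows.
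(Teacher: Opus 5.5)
Your proposal is correct and follows the paper's proof. The paper likewise gets closure from Definition~\ref{def:markov_polycategory}, the unit laws from Corollary~\ref{cor:ksc_unitality}, and associativity and interchange from Corollary~\ref{cor:ksc_associativity_interchange}; your extra check that two iterated composites realize literally the same diagram, with identical insertion-convention profiles, spells out what the paper leaves implicit in the phrase ``in this sense'' of that corollary.
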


\begin{proof}
The unit laws are Corollary~\ref{cor:ksc_unitality}. The associativity and interchange laws are Corollary~\ref{cor:ksc_associativity_interchange}. Closure under KSC is part of Definition~\ref{def:markov_polycategory}. Hence, the Markov polycategory satisfies the ordered polycategory axioms with KSC as slotwise composition.
\end{proof}

\begin{figure}[t!]
\centering
\begin{subfigure}[b]{0.32\textwidth}
\centering
\includegraphics[width=\textwidth]{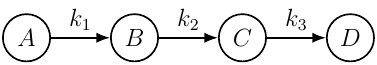}
\caption{}
\label{fig:ksc_bn_chain_struct_dag}
\end{subfigure}
\hfill
\begin{subfigure}[b]{0.32\textwidth}
\centering
\includegraphics[width=\textwidth]{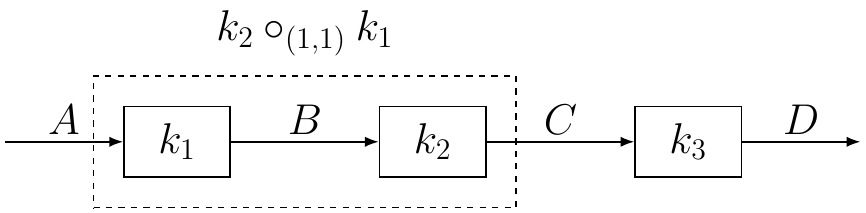}
\caption{}
\label{fig:ksc_bn_chain_struct_right_assoc}
\end{subfigure}
\hfill
\begin{subfigure}[b]{0.32\textwidth}
\centering
\includegraphics[width=\textwidth]{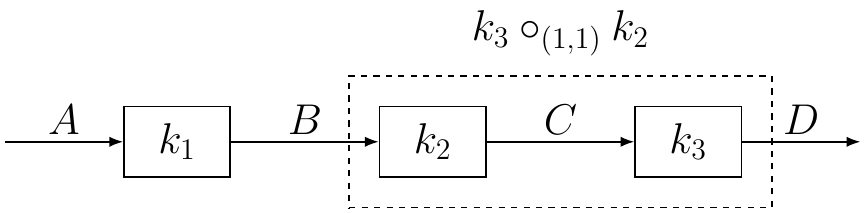}
\caption{}
\label{fig:ksc_bn_chain_struct_left_assoc}
\end{subfigure}
\begin{subfigure}[b]{0.32\textwidth}
\centering
\includegraphics[width=0.65\textwidth]{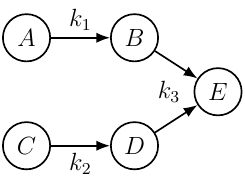}
\caption{}
\label{fig:ksc_bn_v_struct_dag}
\end{subfigure}
\hfill
\begin{subfigure}[b]{0.32\textwidth}
\centering
\includegraphics[width=\textwidth]{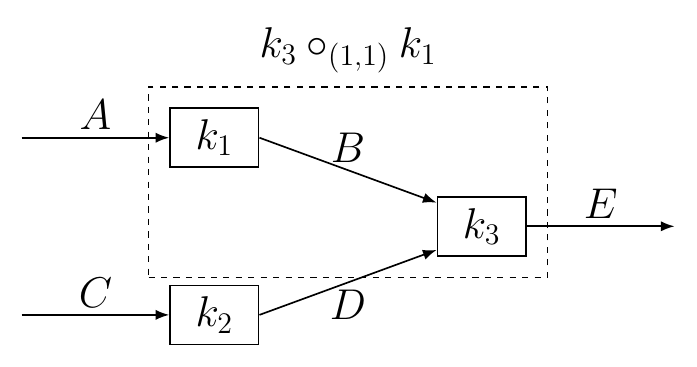}
\caption{}
\label{fig:ksc_bn_v_struct_assoc01}
\end{subfigure}
\hfill
\begin{subfigure}[b]{0.32\textwidth}
\centering
\includegraphics[width=\textwidth]{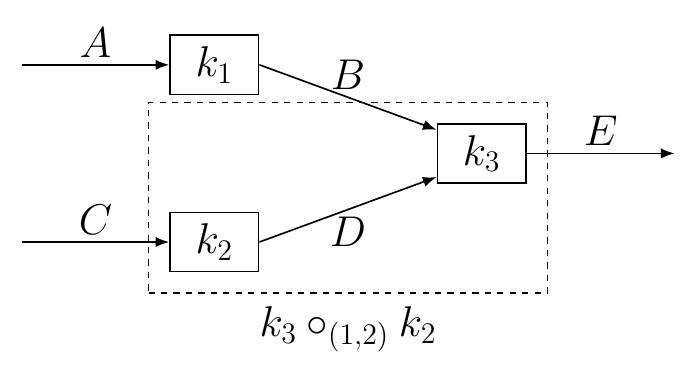}
\caption{}
\label{fig:ksc_bn_v_struct_assoc02}
\end{subfigure}
\caption{Finite Bayesian networks and their KSC string-diagram reductions. 
(a) A chain with kernels $k_1:(A)\to(B)$, $k_2:(B)\to(C)$, and $k_3:(C)\to(D)$. 
(b) String diagram highlighting the reduction $k_3\circ_{(1,1)}(k_2\circ_{(1,1)}k_1)$. 
(c) String diagram highlighting the reduction $(k_3\circ_{(1,1)}k_2)\circ_{(1,1)}k_1$. 
Both reductions denote the same trace kernel $K:(A)\to(D)$. 
(d) A v-structure with kernels $k_1:(A)\to(B)$, $k_2:(C)\to(D)$, and $k_3:(B,D)\to(E)$. 
(e) String diagram highlighting the reduction $(k_3\circ_{(1,1)}k_1)\circ_{(1,2)}k_2$. 
(f) String diagram highlighting the reduction $(k_3\circ_{(1,2)}k_2)\circ_{(1,1)}k_1$. 
Both reductions denote the same trace kernel $H:(A,C)\to(E)$.}
\label{fig:ksc_bn}
\end{figure}

\begin{example}[Trace semantics for finite Bayesian networks]
\label{ex:ksc_bayesian_network}
The trace semantics of Subsection~\ref{subsec:trace_semantics_ksc} recovers the marginalization formulas for finite Bayesian networks. Two examples of finite Bayesian networks and their corresponding KSC string-diagram reductions are shown in Figure~\ref{fig:ksc_bn}.

First consider a chain of three kernels $k_1:(A)\to (B)$, $k_2:(B)\to (C)$, $k_3:(C)\to (D)$, where all objects have finite underlying sets, equipped with their power-set $\sigma$-algebras. Write $M_1(b\mid a)=k_1(\{b\}\mid a)$, $M_2(c\mid b)=k_2(\{c\}\mid b)$, $M_3(d\mid c)=k_3(\{d\}\mid c)$. The finite acyclic KSC diagram obtained by connecting the output wire $B$ to the input of $k_2$ and the output wire $C$ to the input of $k_3$ has external input profile $(A)$ and external output profile $(D)$. Its trace kernel $K:(A)\to (D)$ is
\[
K(\{d\}\mid a)
=
\sum_{b\in \underline{B}}
\sum_{c\in \underline{C}}
M_3(d\mid c)M_2(c\mid b)M_1(b\mid a).
\]
This expression is independent of whether the diagram is evaluated as $k_3\circ_{(1,1)}(k_2\circ_{(1,1)}k_1)$ or as $(k_3\circ_{(1,1)}k_2)\circ_{(1,1)}k_1$. Both iterated KSC expressions denote the same trace kernel $K$, by Corollary~\ref{cor:ksc_associativity_interchange}.

As a second example, consider a v-structure $k_1:(A)\to (B)$, $k_2:(C)\to (D)$, $k_3:(B,D)\to (E)$, again with finite underlying sets. Let $M_1(b\mid a)=k_1(\{b\}\mid a)$, $M_2(d\mid c)=k_2(\{d\}\mid c)$, $M_3(e\mid b,d)=k_3(\{e\}\mid (b,d))$. The finite acyclic KSC diagram obtained by connecting the output of $k_1$ to the first input of $k_3$ and the output of $k_2$ to the second input of $k_3$ has external input profile $(A,C)$ and external output profile $(E)$. Its trace kernel $H:(A,C)\to (E)$ is
\[
H(\{e\}\mid (a,c))
=
\sum_{b\in \underline{B}}
\sum_{d\in \underline{D}}
M_3(e\mid b,d)M_1(b\mid a)M_2(d\mid c).
\]
The two iterated composites $(k_3\circ_{(1,1)}k_1)\circ_{(1,2)}k_2$ and $(k_3\circ_{(1,2)}k_2)\circ_{(1,1)}k_1$ therefore denote the same kernel $H$. This equality is the finite-sum instance of the interchange law. It expresses that the two internal variables $b$ and $d$ are marginalized out, and that the order of these finite marginalizations does not affect the resulting kernel.
\end{example}

\section{Colored Markov polycategories}
\label{sec:colored_markov_polycategories}

This section adds typed structure to Markov polycategories. Object colors classify the types of wires. Morphism colors classify the types of boxes. Typed connections between different but compatible object colors are implemented by interface kernels.

\subsection{Object colors and morphism colors}
\label{subsec:object_morphism_colors}

The coloring structure used in this paper has two levels. Object colors are organized by a category $\K$. Morphism colors are organized by an ordered polycategory $\M$ whose objects are the object colors. Thus, morphism colors have typed input and output profiles.

\begin{definition}[Color system]
\label{def:color_system}
A color system $(\K,\M,\iota)$ consists of the following data.
\begin{enumerate}
\item There is a category $\K$, called the object-color category. Its objects are called object colors.
\item There is an ordered polycategory $\M$, called the morphism-color polycategory, such that $\Ob(\M)=\Ob(\K)$. The morphisms of $\M$ are called morphism colors.
\item Let $\M_1$ denote the category obtained by restricting $\M$ to unary profiles. Thus, $\Ob(\M_1)=\Ob(\M)$ and $\Hom_{\M_1}(c,d)=\Hom_{\M}((c);(d))$. Composition in $\M_1$ is the unary slotwise composition of $\M$. There is an identity-on-objects functor $\iota:\K\to \M_1$. Thus, $\iota$ assigns to each object-color morphism $f:c\to d$ in $\K$ a unary morphism color $\iota(f):(c)\to (d)$ in $\M$.
\end{enumerate}
\end{definition}

The functor $\iota$ records how object-color conversions are seen at the level of morphism colors. Since $\iota$ is a functor, it preserves identities and composition, that is, $\iota(\id_c)=\id_c$, $\iota(g\circ f)=\iota(g)\circ_{(1,1)}\iota(f)$.

Let $\PC$ be a Markov polycategory datum. An object-coloring of $\PC$ is a function $\chi:\Ob(\PC)\to \Ob(\K)$. If $\mathbf{A}=(A_1,\ldots,A_m)$ is an ordered tuple of objects of $\PC$, then we write $\chi(\mathbf{A})=(\chi(A_1),\ldots,\chi(A_m))$.

A morphism-coloring of $\PC$ assigns to each morphism $k:\mathbf{A}\to \mathbf{B}$ of $\PC$ a morphism color $\psi(k)\in \Hom_{\M}(\chi(\mathbf{A});\chi(\mathbf{B}))$. Thus, the color of a morphism records its role, and the colors of its input and output profiles. The identity kernel $\id_X:(X)\to (X)$ is required to have morphism color $\psi(\id_X)=\id_{\chi(X)}$.

Using a morphism-color polycategory lets the color of a composite be computed by the same slotwise operation used for kernels. If $k:\mathbf{A}\to \mathbf{B}$ and $l:\mathbf{C}\to \mathbf{D}$ are composed along compatible slots, then the corresponding morphism colors are composed by the same slotwise operation in $\M$. This avoids introducing a separate ad hoc operation for coloring composites.

\subsection{Interface systems}
\label{subsec:interface_systems}

Object-color compatibility is realized at the level of measurable spaces by interface kernels. The purpose of an interface kernel is to turn a color-level compatibility morphism into a Markov kernel between the objects carried by the connected wires.

\begin{definition}[Interface data]
\label{def:interface_data}
Let $\PC$ be a Markov polycategory datum equipped with a color system $(\K,\M,\iota)$, an object-coloring $\chi$, and a morphism-coloring $\psi$. Interface data on $\PC$ consists of the following data.
\begin{enumerate}
\item For every ordered pair of objects $B,C\in \Ob(\PC)$, there is a set $\Int_{\PC}(B,C)\subseteq \Hom_{\K}(\chi(B),\chi(C))$. An element $f\in \Int_{\PC}(B,C)$ is called an interface color morphism from $B$ to $C$.
\item For every $f\in \Int_{\PC}(B,C)$, there is a unary morphism $\kappa_{f;B,C}\in \Hom_{\PC}((B);(C))$. This morphism is called the interface kernel determined by $f$ at the object pair $(B,C)$. As a Markov kernel, it has signature $\kappa_{f;B,C}:(\Sigma_{\underline{C}},\underline{B})\to [0,1]$.
\end{enumerate}
As a compatibility condition, the morphism color of the interface kernel is required to be the unary morphism color determined by $f$, that is, $\psi(\kappa_{f;B,C})=\iota(f)\in \Hom_{\M}((\chi(B));(\chi(C)))$.
\end{definition}

A typed connection from an output object $B$ to an input object $C$ is therefore specified by the existence of a morphism $\chi(B)\to \chi(C)$ in $\K$, and by an interface color morphism $f\in \Int_{\PC}(B,C)$, together with the corresponding kernel $\kappa_{f;B,C}$. This avoids ambiguity when there is more than one color morphism between the same two object colors.

The notation $\kappa_{f;B,C}$ includes the objects $B$ and $C$. This is necessary because the same color-level morphism $f$ may be used for different pairs of objects with different underlying standard Borel spaces. The color morphism records typed compatibility, while the interface kernel records the corresponding data transformation.

\subsection{Identity and composition laws for interface systems}
\label{subsec:interface_coherence}

Interface kernels must be compatible with identity and composition in the object-color category. Otherwise, different bracketings of a typed diagram could insert different interface behavior.

\begin{definition}[Interface system]
\label{def:interface_system}
Let $\PC$ be a Markov polycategory equipped with interface data as in Definition~\ref{def:interface_data}. The interface data form an interface system if the following two laws hold.
\begin{enumerate}
\item The identity law holds. For every object $X\in \Ob(\PC)$, $\id_{\chi(X)}\in \Int_{\PC}(X,X)$, and the corresponding interface kernel is the identity kernel: $\kappa_{\id_{\chi(X)};X,X}=\id_X$.
\item The composition law holds. If $B,C,D\in \Ob(\PC)$, $f\in \Int_{\PC}(B,C)$, and $g\in \Int_{\PC}(C,D)$, then $g\circ f\in \Int_{\PC}(B,D)$, and the corresponding interface kernel satisfies
\[
\kappa_{g\circ f;B,D}
=
\kappa_{g;C,D}\circ_{(1,1)}\kappa_{f;B,C}.
\]
\end{enumerate}
\end{definition}

Since the interface kernels are unary kernels, the composite
$\kappa_{g;C,D}\circ_{(1,1)}\kappa_{f;B,C}$ is the Chapman-Kolmogorov composite of Markov kernels. Thus, the composition law says that performing two compatible type conversions in sequence is the same as performing the single interface conversion associated with their composite color morphism.

The coloring condition in Definition~\ref{def:interface_data} is compatible with Definition~\ref{def:interface_system}. Indeed, since $\iota:\K\to \M_1$ is a functor,
\[
\psi(\kappa_{g\circ f;B,D})
=
\iota(g\circ f)
=
\iota(g)\circ_{(1,1)}\iota(f)
=
\psi(\kappa_{g;C,D})\circ_{(1,1)}\psi(\kappa_{f;B,C}).
\]
Thus, the identity and composition laws for interface kernels are compatible with the corresponding laws for morphism colors.

\begin{proposition}[Composition law for interface paths]
\label{prop:interface_path_coherence}
Let $\PC$ be a Markov polycategory equipped with an interface system. Let $X_0,X_1,\ldots,X_r$ be objects of $\PC$, and let $f_s\in \Int_{\PC}(X_{s-1},X_s)$ for $s=1,\ldots,r$, where $r \ge 1$. Then the composite color morphism $f_r\circ\cdots\circ f_1:\chi(X_0)\to \chi(X_r)$ is an interface color morphism from $X_0$ to $X_r$, and its interface kernel is
\[
\kappa_{f_r\circ\cdots\circ f_1;X_0,X_r}
=
\kappa_{f_r;X_{r-1},X_r}
\circ_{(1,1)}
\cdots
\circ_{(1,1)}
\kappa_{f_1;X_0,X_1}.
\]
The right-hand side is independent of the bracketing.
\end{proposition}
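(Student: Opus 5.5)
The argument is an induction on $r$, with the composition law of Definition~\ref{def:interface_system} as the inductive step and Corollary~\ref{cor:ksc_associativity_interchange} handling bracketing. I would first settle the bracketing claim, then prove membership and the kernel identity for one fixed bracketing. That bracketing is the right-nested composite $\kappa_{f_r}\circ_{(1,1)}(\kappa_{f_{r-1}}\circ_{(1,1)}(\cdots\circ_{(1,1)}\kappa_{f_1}))$, matching $f_r\circ(f_{r-1}\circ\cdots\circ f_1)$ in $\K$.

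\textbf{Step 1: independence of bracketing.} Consider the finite acyclic KSC diagram $D$ with vertices $v_1,\ldots,v_r$. Vertex $v_s$ is labelled by $\kappa_{f_s;X_{s-1},X_s}$, and there are internal wires $((v_s,1),(v_{s+1},1))$ for $s<r$. This is a linear chain. It is acyclic, each slot is used at most once, and object equality holds along each wire because the output object of $\kappa_{f_s}$ is $X_s$, which is the input object of $\kappa_{f_{s+1}}$. Its external profiles are $(X_0)$ and $(X_r)$. Every bracketing of the iterated unary composite is an iterated KSC expression realizing $D$ in the sense of Definition~\ref{def:iterated_ksc_expression}. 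Since all profiles are unary, the insertion convention produces exactly $(X_0)\to(X_r)$ at every stage. By Corollary~\ref{cor:ksc_associativity_interchange}, every bracketing evaluates to the trace kernel $K_D$, so the right-hand side is well defined. Alternatively, one could cite associativity of Chapman--Kolmogorov composition directly, because unary KSC is Chapman--Kolmogorov composition.

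\textbf{Step 2: induction on $r$.} For $r=1$ both claims are trivial. For $r\ge 2$, assume the claims for the path $X_0,\ldots,X_{r-1}$. Set $h=f_{r-1}\circ\cdots\circ f_1$. The induction hypothesis gives $h\in\Int_{\PC}(X_0,X_{r-1})$ and
\[
\kappa_{h;X_0,X_{r-1}}=\kappa_{f_{r-1}}\circ_{(1,1)}\cdots\circ_{(1,1)}\kappa_{f_1}.
\]
Apply the composition law of Definition~\ref{def:interface_system} to $h\in\Int_{\PC}(X_0,X_{r-1})$ and $f_r\in\Int_{\PC}(X_{r-1},X_r)$. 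This yields $f_r\circ h\in\Int_{\PC}(X_0,X_r)$ and $\kappa_{f_r\circ h;X_0,X_r}=\kappa_{f_r}\circ_{(1,1)}\kappa_{h;X_0,X_{r-1}}$. Associativity in the category $\K$ gives $f_r\circ h=f_r\circ\cdots\circ f_1$, so the composite is unambiguous as a color morphism and its interface kernel is well indexed. Substituting the induction hypothesis produces the right-nested bracketing, and Step 1 extends the identity to every bracketing.

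The main obstacle is Step 1, specifically justifying that every bracketing is genuinely an iterated KSC expression realizing the same diagram with the same ordered external profiles. In the unary chain this reduces to checking that each binary step connects the unique external output of one sub-chain to the unique external input of the adjacent one, with $r=s=1$. Everything else is a direct application of the interface composition law.
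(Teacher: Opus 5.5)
Your proposal is correct and follows essentially the same route as the paper: induction on $r$ via the composition law of Definition~\ref{def:interface_system}, which yields the right-nested composite, with bracketing independence obtained from Corollary~\ref{cor:ksc_associativity_interchange} applied to the unary chain of interface kernels. Your explicit construction of the chain diagram, and your check that every bracketing is an iterated KSC expression realizing it, spell out details the paper leaves implicit.
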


\begin{proof}
The proof is by induction on $r$. The case $r=1$ is immediate.

Assume that $r>1$ and that the statement holds for paths of length $r-1$. Let $f_s\in \Int_{\PC}(X_{s-1},X_s)$ for $s=1,\ldots,r$. By the induction hypothesis, $f_{r-1}\circ\cdots\circ f_1$ is an interface color morphism from $X_0$ to $X_{r-1}$, and its interface kernel is the corresponding composite of $\kappa_{f_1},\ldots,\kappa_{f_{r-1}}$. Applying the composition law in Definition~\ref{def:interface_system} to this interface color morphism and $f_r$ shows that $f_r\circ\cdots\circ f_1$ belongs to $\Int_{\PC}(X_0,X_r)$, and that its interface kernel is obtained by composing $\kappa_{f_r;X_{r-1},X_r}$ with the interface kernel for $f_{r-1}\circ\cdots\circ f_1$.

The bracketing independence follows from Corollary~\ref{cor:ksc_associativity_interchange}, applied to the unary chain of interface kernels.
\end{proof}

The empty path case is governed by the identity law in Definition~\ref{def:interface_system}. If no interface color morphisms are present, the composite from $X_0$ to itself is understood to be $\id_{\chi(X_0)}$, and the corresponding interface kernel is $\id_{X_0}$.

\subsection{Definition of colored Markov polycategory}
\label{subsec:colored_markov_polycategories}

We now collect the color data, morphism-coloring, and interface system into one structure. The colored slotwise composition operation itself is defined later, in Section~\ref{sec:cksc_diagram_semantics}, using interface kernels.

\begin{definition}[Colored Markov polycategory]
\label{def:colored_markov_polycategory}
A colored Markov polycategory (CMP) consists of the following data.
\begin{enumerate}
\item There is a Markov polycategory $\PC$.
\item There is a color system $(\K,\M,\iota)$.
\item There is an object-coloring function $\chi : \Ob(\PC) \to \Ob(\K)$.
\item There is a morphism-coloring function $\psi$ which assigns to every morphism $k:\mathbf{A}\to \mathbf{B}$ of $\PC$ a morphism color $\psi(k) \in \Hom_{\M}(\chi(\mathbf{A}); \chi(\mathbf{B}))$, where $\chi(\mathbf{A})=(\chi(A_1),\ldots,\chi(A_m))$ for $\mathbf{A}=(A_1,\ldots,A_m)$, and similarly for $\mathbf{B}$.
\item There is an interface system on $\PC$, relative to the color system $(\K,\M,\iota)$ and the colorings $\chi$ and $\psi$.
\end{enumerate}

The pair $(\chi,\psi)$ is required to preserve identities and KSC. This means that, for every object $X\in \Ob(\PC)$, $\psi(\id_X)=\id_{\chi(X)}$. Moreover, if $k:\mathbf{A}\to \mathbf{B}$ and $l:\mathbf{C}\to \mathbf{D}$ are morphisms of $\PC$, and if $i$ is an output slot of $k$ and $j$ is an input slot of $l$ such that $B_i=C_j$, then
\[
\psi(l\circ_{(i,j)}k)
=
\psi(l)\circ_{(i,j)}\psi(k).
\]
\end{definition}

Thus, a CMP is a Markov polycategory whose kernels are typed by object colors and morphism colors, and whose specified type conversions are represented by interface kernels. The requirement that $(\chi,\psi)$ preserves KSC says that the color of an underlying KSC composite is determined by composing the corresponding morphism colors in $\M$. The interface system is a mechanism for composing kernels whose connected slots have different but compatible object colors. This typed composition is introduced in Section~\ref{sec:cksc_diagram_semantics}.

\subsection{CMP-functors}
\label{subsec:cmp_functors}

The co-indexed construction in Section~\ref{sec:ccmp} requires structure-preserving maps between CMPs. We use a fixed color system when comparing CMPs. Thus, colors are treated as a global type system shared by the CMPs under comparison.

\begin{definition}[CMP-functor]
\label{def:cmp_functor}
Let $\PC$ and $\PC'$ be CMPs over the same color system $(\K,\M,\iota)$. Write their object-coloring functions as $\chi,\chi'$ and their morphism-coloring functions as $\psi,\psi'$. A CMP-functor $G:\PC\to \PC'$ consists of the following data.
\begin{enumerate}
\item $G$ assigns to each object $X\in \Ob(\PC)$ an object $G X\in \Ob(\PC')$. For an ordered tuple $\mathbf{A}=(A_1,\ldots,A_m)$, write $G\mathbf{A}=(G A_1,\ldots,G A_m)$.
\item For every morphism $k:\mathbf{A}\to \mathbf{B}$ in $\PC$, $G$ assigns a morphism $Gk:G\mathbf{A}\to G\mathbf{B}$ in $\PC'$.
\end{enumerate}

The assignments are required to preserve identities and KSC. This means that, for every object $X\in \Ob(\PC)$, $G(\id_X)=\id_{G X}$. Moreover, if $l\circ_{(i,j)}k$ is defined in $\PC$, then $G(l\circ_{(i,j)}k)=G l\circ_{(i,j)}G k$.

The assignments are also required to preserve colors. This means that, for every object $X\in \Ob(\PC)$, $\chi'(G X)=\chi(X)$, and, for every morphism $k$ of $\PC$, $\psi'(Gk)=\psi(k)$.

Finally, the assignments are required to preserve interface systems. If $B,C\in \Ob(\PC)$ and $f\in \Int_{\PC}(B,C)$, then $f\in \Int_{\PC'}(G B,G C)$, and $G(\kappa_{f;B,C})=\kappa'_{f;G B,G C}$. Here, $\kappa$ and $\kappa'$ denote the interface kernels of $\PC$ and $\PC'$, respectively.
\end{definition}

Thus, a CMP-functor is an ordered polyfunctor between the underlying Markov polycategories which also preserves object colors, morphism colors, and typed interface kernels. The condition on interfaces ensures that applying $G$ to a typed connection gives the same result as first transporting the connected objects and then applying the corresponding interface kernel in the target CMP.

CMPs over a fixed color system and CMP-functors between them form a category, denoted by $\mathbf{CMP}_{\K,\M,\iota}$, with identities and composition defined component-wise (see Proposition~\ref{prop:category_of_cmps}). When the color system is clear from context, this category is denoted by $\mathbf{CMP}$.

\section{Colored kernel slotwise composition and diagram semantics}
\label{sec:cksc_diagram_semantics}

This section defines typed slotwise composition in a CMP. The uncolored KSC operation requires equality of the connected objects. In a CMP, this condition is replaced by typed compatibility specified by an interface color morphism. If an output object $B$ and an input object $C$ are connected by an interface color morphism $f\in \Int_{\PC}(B,C)$, the connection is mediated by the interface kernel $\kappa_{f;B,C}:B\to C$. Proofs not included in the main text of this section are given in Appendix~\ref{app:cksc_proofs}.

\subsection{Colored kernel slotwise composition}
\label{subsec:cksc}

Let $\PC$ be a CMP. Let $k:\mathbf{A}\to \mathbf{B}$ and $l:\mathbf{C}\to \mathbf{D}$ be morphisms of the underlying Markov polycategory, where $\mathbf{A}=(A_1,\ldots,A_m)$, $\mathbf{B}=(B_1,\ldots,B_n)$, $\mathbf{C}=(C_1,\ldots,C_q)$, $\mathbf{D}=(D_1,\ldots,D_r)$. Let $i\in \{1,\ldots,n\}$ and $j\in \{1,\ldots,q\}$. A colored connection from the $i$-th output of $k$ to the $j$-th input of $l$ is specified by an interface color morphism $f\in \Int_{\PC}(B_i,C_j)$. The associated interface kernel is $\kappa_{f;B_i,C_j}:(B_i)\to (C_j)$.

\begin{definition}[Colored kernel slotwise composition]
\label{def:cksc}
The colored kernel slotwise composition (CKSC) of $l$ with $k$ along the slot pair $(i,j)$ and interface color morphism $f\in \Int_{\PC}(B_i,C_j)$ is the morphism $l\circ^{f}_{(i,j)}k:\boldsymbol{\Gamma}\to \boldsymbol{\Delta}$ defined by
\[
l\circ^{f}_{(i,j)}k
=
l\circ_{(i,j)}
\bigl(
\kappa_{f;B_i,C_j}\circ_{(i,1)}k
\bigr).
\]
The input and output profiles are
\[
\boldsymbol{\Gamma}
=
(C_1,\ldots,C_{j-1},A_1,\ldots,A_m,C_{j+1},\ldots,C_q)
\]
and
\[
\boldsymbol{\Delta}
=
(B_1,\ldots,B_{i-1},D_1,\ldots,D_r,B_{i+1},\ldots,B_n).
\]
\end{definition}

Equivalently, the CKSC kernel is given by the integral formula
\[
\begin{split}
&(l\circ^{f}_{(i,j)}k)(E\mid V_j(\mathbf{a},\mathbf{c}_{-j}))\\
&\qquad =
\int_{\underline{\mathbf{B}}}
\left(
\int_{\underline{C_j}}
\left(
\int_{\underline{\mathbf{D}}}
\mathbf{1}_E\bigl(U_i(\pi_{-i}(\mathbf{b}),\mathbf{d})\bigr)
\,l(d\mathbf{d}\mid T_j(\mathbf{c}_{-j},z))
\right)
\kappa_{f;B_i,C_j}(dz\mid \pi_i(\mathbf{b}))
\right)
k(d\mathbf{b}\mid \mathbf{a})
\end{split}
\]
for every $E\in \Sigma_{\underline{\boldsymbol{\Delta}}}$ and every input $V_j(\mathbf{a},\mathbf{c}_{-j})\in \underline{\boldsymbol{\Gamma}}$. When $B_i=C_j$ and $f=\id_{\chi(B_i)}$, the interface kernel is $\id_{B_i}$ by Definition~\ref{def:interface_system}. In that case CKSC reduces to the uncolored KSC of Definition~\ref{def:ksc}.

The morphism color of the CKSC composite is determined by the morphism-color polycategory. Since $\psi(\kappa_{f;B_i,C_j})=\iota(f)$, it follows that
\[
\psi(l\circ^{f}_{(i,j)}k)
=
\psi(l)\circ_{(i,j)}
\left(
\iota(f)\circ_{(i,1)}\psi(k)
\right).
\]
Thus, the typed data transformation and the morphism color of the composite are compatible by construction.

\subsection{CKSC string diagrams}
\label{subsec:cksc_string_diagrams}

A CKSC string diagram is a finite acyclic string diagram whose internal connections are typed by interface color morphisms. The interface color morphism, together with the connected source and target objects, determines the kernel inserted on the connected wire.

\begin{definition}[Finite acyclic CKSC diagram]
\label{def:finite_acyclic_cksc_diagram}
Let $\PC$ be a CMP. A finite acyclic CKSC diagram $D$ in $\PC$ consists of the following data.
\begin{enumerate}
\item $D$ has a finite set $V(D)$ of vertices. Each vertex $u\in V(D)$ is labelled by a morphism $k_u:\mathbf{A}_u\to \mathbf{B}_u$ of the underlying Markov polycategory of $\PC$, where $\mathbf{A}_u=(A_{u,1},\ldots,A_{u,m_u})$ and $\mathbf{B}_u=(B_{u,1},\ldots,B_{u,n_u})$ are finite ordered tuples of objects.
\item $D$ has a finite set $W_{\mathrm{int}}(D)$ of internal wires. Each internal wire is a triple $e=((u,p),(v,q),f_e)$, where $u,v\in V(D)$, $1\leq p\leq n_u$, $1\leq q\leq m_v$, and $f_e\in \Int_{\PC}(B_{u,p},A_{v,q})$. The wire connects the $p$-th output slot of $u$ to the $q$-th input slot of $v$, and the interface color morphism $f_e$ determines the interface kernel $\kappa_{f_e;B_{u,p},A_{v,q}}:B_{u,p}\to A_{v,q}$.
\item Each input slot is the target of at most one internal wire, and each output slot is the source of at most one internal wire.
\item The directed graph on $V(D)$ with an edge $u\to v$ for every internal wire $((u,p),(v,q),f_e)$ is acyclic.
\item Let the external input slots of $D$ be the input slots $(v,q)$ that are not targets of internal wires. Let the external output slots of $D$ be the output slots $(u,p)$ that are not sources of internal wires. The diagram includes a specified ordering of all its external input slots and all its external output slots. If the ordered list of external input slots is $((v_1,q_1),\ldots,(v_a,q_a))$, then the external input profile of $D$ is $\mathbf{I}_D=(A_{v_1,q_1},\ldots,A_{v_a,q_a})$. If the ordered list of external output slots is $((u_1,p_1),\ldots,(u_b,p_b))$, then the external output profile of $D$ is $\mathbf{O}_D=(B_{u_1,p_1},\ldots,B_{u_b,p_b})$.
\end{enumerate}
\end{definition}

The third clause of Definition~\ref{def:finite_acyclic_cksc_diagram} is the linearity condition on wires. It excludes implicit copying and merging. Since every unconnected output slot is an external output slot, the diagram has no implicit discarding.

The acyclicity condition is imposed on the graph of non-interface vertices. Since every interface kernel lies on a wire directed from an output slot to an input slot, inserting these interface kernels as unary vertices does not create a directed cycle.

\begin{definition}[Interface expansion]
\label{def:interface_expansion}
Let $D$ be a finite acyclic CKSC diagram in a CMP $\PC$. The interface expansion of $D$, denoted by $\widehat{D}$, is the finite acyclic KSC diagram obtained as follows. For every vertex $u\in V(D)$, $\widehat{D}$ has a vertex labelled by the same kernel $k_u$. For every internal wire $e=((u,p),(v,q),f_e)$ of $D$, the diagram $\widehat{D}$ has an additional unary vertex, called the interface vertex associated with $e$, labelled by the interface kernel $\kappa_{f_e;B_{u,p},A_{v,q}}:B_{u,p}\to A_{v,q}$. The $p$-th output slot of $u$ is connected to the input slot of this interface vertex, and the output slot of the interface vertex is connected to the $q$-th input slot of $v$. The external input and output profiles of $\widehat{D}$ are the same as those of $D$.
\end{definition}

The interface expansion replaces each typed CKSC wire by a two-step KSC path through the corresponding interface kernel. Thus, a typed connection from $B$ to $C$ is interpreted by first applying the kernel $\kappa_{f;B,C}:B\to C$ and then feeding its output into the target input slot. This allows the trace semantics of CKSC diagrams to be defined using the KSC trace semantics established in Theorem~\ref{thm:trace_semantics_ksc}.

\subsection{Trace semantics for finite acyclic CKSC diagrams}
\label{subsec:trace_semantics_cksc}

The semantics of a CKSC diagram is defined by expanding its interface kernels and then applying the KSC trace semantics of Subsection~\ref{subsec:trace_semantics_ksc}. Definition~\ref{def:trace_kernel_cksc} states this construction.

\begin{definition}[Trace kernel of a CKSC diagram]
\label{def:trace_kernel_cksc}
Let $D$ be a finite acyclic CKSC diagram in a CMP $\PC$, and let $\widehat{D}$ be its interface expansion. The trace kernel of $D$ is the Markov kernel $K_D:\mathbf{I}_D\to \mathbf{O}_D$ defined by $K_D=K_{\widehat{D}}$, where $K_{\widehat{D}}$ is the KSC trace kernel of the finite acyclic KSC diagram $\widehat{D}$.
\end{definition}

\begin{theorem}[Trace semantics for finite acyclic CKSC diagrams]
\label{thm:trace_semantics_cksc}
Let $D$ be a finite acyclic CKSC diagram in a CMP $\PC$. Then $D$ determines a unique Markov kernel $K_D:\mathbf{I}_D\to \mathbf{O}_D$. This kernel is independent of the choice of topological ordering used to evaluate the interface expansion $\widehat{D}$.
\end{theorem}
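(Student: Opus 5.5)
The proof reduces to Theorem~\ref{thm:trace_semantics_ksc} applied to the interface expansion $\widehat{D}$. By Definition~\ref{def:trace_kernel_cksc}, $K_D$ is defined as $K_{\widehat{D}}$. So it suffices to show three things: $\widehat{D}$ is a legitimate finite acyclic KSC diagram in the underlying Markov polycategory, its external profiles coincide with $\mathbf{I}_D$ and $\mathbf{O}_D$, and the KSC trace kernel of $\widehat{D}$ does not depend on the topological ordering.

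First, verify the clauses of Definition~\ref{def:finite_acyclic_ksc_diagram} for $\widehat{D}$. The vertex set is $V(D)$ together with one interface vertex $w_e$ for each $e\in W_{\mathrm{int}}(D)$. This set is finite because $V(D)$ and $W_{\mathrm{int}}(D)$ are finite. Each interface vertex is labelled by $\kappa_{f_e;B_{u,p},A_{v,q}}\in \Hom_{\PC}((B_{u,p});(A_{v,q}))$, which is a morphism by Definition~\ref{def:interface_data}.

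Each wire $e=((u,p),(v,q),f_e)$ is replaced by two wires $((u,p),(w_e,1))$ and $((w_e,1),(v,q))$. These satisfy the literal equality requirement, since the input object of $w_e$ is $B_{u,p}$ and its output object is $A_{v,q}$.

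Linearity follows because each slot of $D$ was used by at most one wire $e$, and each new slot of $w_e$ is used exactly once. Hence no slot acquires two wires.

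For acyclicity, suppose $\widehat{D}$ had a directed cycle. Every edge into $w_e$ comes from $u$, and every edge out of $w_e$ goes to $v$. Contracting each segment $u\to w_e\to v$ to the edge $u\to v$ therefore yields a directed cycle in the graph of $D$, contradicting clause 4 of Definition~\ref{def:finite_acyclic_cksc_diagram}. A cycle cannot consist only of interface vertices, since interface vertices are never adjacent to one another.

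Next, check the external slots. An input slot of $\widehat{D}$ is external exactly when it is an input slot of some $u\in V(D)$ that was external in $D$, because the input slot of each $w_e$ is always connected. The dual statement holds for output slots. Hence the specified orderings of $D$ carry over directly, and $\mathbf{I}_{\widehat{D}}=\mathbf{I}_D$ and $\mathbf{O}_{\widehat{D}}=\mathbf{O}_D$.

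Now Theorem~\ref{thm:trace_semantics_ksc} applies. It shows that $K_{\widehat{D}}^{\sigma}$ is a Markov kernel for every topological ordering $\sigma$ of $V(\widehat{D})$, and that all these kernels coincide. This gives existence, uniqueness, and independence of the ordering used to evaluate $\widehat{D}$.

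One subtlety remains. $\widehat{D}$ is determined by $D$ only up to the names of its interface vertices. Relabelling vertices does not change the trace kernel, because the measure in Definition~\ref{def:trace_measure_ksc} depends only on the labelled slot structure. So $K_D$ is well defined.

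I expect the main obstacle to be the acyclicity of $\widehat{D}$, together with the bookkeeping showing that the external slots and their orders are preserved. Everything else is a direct invocation of Theorem~\ref{thm:trace_semantics_ksc}.
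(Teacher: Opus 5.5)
Your proposal is correct and takes the same route as the paper. Both identify $K_D$ with $K_{\widehat{D}}$ via Definition~\ref{def:trace_kernel_cksc} and apply Theorem~\ref{thm:trace_semantics_ksc} to the interface expansion. The paper simply asserts that $\widehat{D}$ is a finite acyclic KSC diagram with the same external profiles, citing for acyclicity the remark before Definition~\ref{def:interface_expansion}. You instead check the wire-equality, linearity, acyclicity and external-slot conditions explicitly, which is a sound and welcome addition.
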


\begin{proof}
By Definition~\ref{def:interface_expansion}, the interface expansion $\widehat{D}$ is a finite acyclic KSC diagram. Its external input and output profiles are the same as those of $D$. By Theorem~\ref{thm:trace_semantics_ksc}, the diagram $\widehat{D}$ determines a unique Markov kernel $K_{\widehat{D}}:\mathbf{I}_D\to \mathbf{O}_D$, independent of the topological ordering used to evaluate it. Definition~\ref{def:trace_kernel_cksc} sets $K_D=K_{\widehat{D}}$. Hence, $D$ determines a unique Markov kernel with external input profile $\mathbf{I}_D$ and external output profile $\mathbf{O}_D$.
\end{proof}

Theorem~\ref{thm:trace_semantics_cksc} says that typed diagrams can be evaluated by replacing each typed connection with its corresponding interface kernel. The interface expansion yields a KSC diagram whose internal wires are integrated out by the KSC trace semantics of Section~\ref{sec:markov_polycategories_ksc}.

\subsection{Structural laws for CKSC}
\label{subsec:structural_laws_cksc}

The structural laws for CKSC follow from the trace semantics of interface expansions. In this subsection, we first compare binary CKSC with trace semantics, and then derive well-definedness, unitality, associativity, and interchange for CKSC.

\begin{proposition}[Binary CKSC agrees with trace semantics]
\label{prop:binary_cksc_trace_semantics}
Let $k:\mathbf{A}\to \mathbf{B}$ and $l:\mathbf{C}\to \mathbf{D}$ be morphisms in a CMP $\PC$. Let $f\in \Int_{\PC}(B_i,C_j)$. Let $D_{k,l}^{i,j,f}$ be the finite acyclic CKSC diagram with two non-interface vertices labelled by $k$ and $l$, and with one internal wire connecting the $i$-th output slot of $k$ to the $j$-th input slot of $l$ through the interface color morphism $f$. Give $D_{k,l}^{i,j,f}$ the external input and output profiles $\boldsymbol{\Gamma}$ and $\boldsymbol{\Delta}$ of Definition~\ref{def:cksc}. Then the trace kernel of $D_{k,l}^{i,j,f}$ is the CKSC kernel $l\circ^f_{(i,j)}k$ of Definition~\ref{def:cksc}.
\end{proposition}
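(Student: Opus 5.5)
By Definition~\ref{def:trace_kernel_cksc}, the trace kernel of $D_{k,l}^{i,j,f}$ is the KSC trace kernel of its interface expansion $\widehat{D}$. The expansion has three vertices, labelled $k$, $\kappa:=\kappa_{f;B_i,C_j}$ and $l$. It has two internal wires: one from the $i$-th output of $k$ to the unique input of $\kappa$, and one from the output of $\kappa$ to the $j$-th input of $l$. Its external profiles are $\boldsymbol{\Gamma}$ and $\boldsymbol{\Delta}$. The plan is to show that evaluating $\widehat{D}$ as an iterated KSC expression reproduces exactly the bracketing $l\circ_{(i,j)}(\kappa\circ_{(i,1)}k)$ in Definition~\ref{def:cksc}, and then to invoke Corollary~\ref{cor:ksc_associativity_interchange}.

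First I will form the iterated KSC expression $E_1$ from the vertices $k$ and $\kappa$ along the first wire. Its subdiagram $D_{E_1}$ has external input profile $\mathbf{A}$. By the insertion convention, its output profile is $(B_1,\ldots,B_{i-1},C_j,B_{i+1},\ldots,B_n)$, and its evaluation kernel is $\kappa\circ_{(i,1)}k$. Here I must check that the slot indices match Definition~\ref{def:iterated_ksc_expression}: the wire leaves the $i$-th external output of $D_k$ and enters the first (and only) input of $D_\kappa$. Second, I will combine $E_1$ with the single vertex $l$ along the second wire. The output of $\kappa$ sits at position $i$ of $\mathbf{O}_{D_{E_1}}$, and the target is the $j$-th input of $l$. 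The resulting evaluation kernel is therefore $l\circ_{(i,j)}(\kappa\circ_{(i,1)}k)$. Its profiles, again by the insertion convention, are $\boldsymbol{\Gamma}$ and $\boldsymbol{\Delta}$.

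The main point requiring care is that this expression \emph{realizes} $\widehat{D}$, including the ordered external profiles. The orderings produced by the two insertion steps must coincide with the orderings $\boldsymbol{\Gamma}$ and $\boldsymbol{\Delta}$ assigned to $D_{k,l}^{i,j,f}$, and hence to $\widehat{D}$. I will verify this slot by slot. On the output side, inserting $\mathbf{D}$ at position $i$ of $(B_1,\ldots,B_{i-1},C_j,B_{i+1},\ldots,B_n)$ replaces the $C_j$ entry and gives $\boldsymbol{\Delta}$. On the input side, inserting $\mathbf{A}$ at position $j$ of $\mathbf{C}$ gives $\boldsymbol{\Gamma}$. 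Once this is checked, Corollary~\ref{cor:ksc_associativity_interchange} gives that the evaluation kernel equals $K_{\widehat{D}}=K_{D_{k,l}^{i,j,f}}$, which proves the claim.

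As an independent check, I can compute $K_{\widehat{D}}$ directly from Definition~\ref{def:trace_measure_ksc}, using the topological ordering $(k,\kappa,l)$. The iterated integral first samples $\mathbf{b}$ from $k$, then $z$ from $\kappa(\cdot\mid \pi_i(\mathbf{b}))$, and then $\mathbf{d}$ from $l(\cdot\mid T_j(\mathbf{c}_{-j},z))$. The output selection map sends these values to $U_i(\pi_{-i}(\mathbf{b}),\mathbf{d})$. This is literally the displayed triple-integral formula for CKSC, so the two routes agree.
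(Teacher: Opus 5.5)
Your proposal is correct and follows essentially the paper's route: you pass to the interface expansion via Definition~\ref{def:trace_kernel_cksc} and identify its KSC trace kernel with $l\circ_{(i,j)}(\kappa_{f;B_i,C_j}\circ_{(i,1)}k)$. The paper does this by citing Propositions~\ref{prop:binary_ksc_trace_semantics} and~\ref{prop:ksc_compatible_trace_semantics} directly, while you invoke Corollary~\ref{cor:ksc_associativity_interchange}, which packages those same two results, and your slot-by-slot profile check and direct triple-integral computation are sound extra care.
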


\begin{proof}
The interface expansion $\widehat{D}_{k,l}^{i,j,f}$ has three vertices. They are labelled by $k$, by the interface kernel $\kappa_{f;B_i,C_j}$, and by $l$. The output slot $i$ of $k$ is connected to the input of the interface kernel, and the output of the interface kernel is connected to the input slot $j$ of $l$.

By Proposition~\ref{prop:binary_ksc_trace_semantics} and Proposition~\ref{prop:ksc_compatible_trace_semantics}, the trace kernel of this expanded KSC diagram is
\[
l\circ_{(i,j)}
\left(
\kappa_{f;B_i,C_j}\circ_{(i,1)}k
\right).
\]
By Definition~\ref{def:cksc}, this is $l\circ^f_{(i,j)}k$. Since $K_{D_{k,l}^{i,j,f}}=K_{\widehat{D}_{k,l}^{i,j,f}}$ by Definition~\ref{def:trace_kernel_cksc}, the result follows.
\end{proof}

\begin{corollary}[Well-definedness of CKSC]
\label{cor:cksc_well_defined}
Let $k:\mathbf{A}\to \mathbf{B}$ and $l:\mathbf{C}\to \mathbf{D}$ be morphisms in a CMP $\PC$. If $f\in \Int_{\PC}(B_i,C_j)$, then the colored kernel slotwise composite $l\circ^f_{(i,j)}k:\boldsymbol{\Gamma}\to \boldsymbol{\Delta}$ of Definition~\ref{def:cksc} is a Markov kernel.
\end{corollary}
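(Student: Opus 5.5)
The plan mirrors the proof of Corollary~\ref{cor:ksc_well_defined}, with Proposition~\ref{prop:binary_cksc_trace_semantics} as the bridge to trace semantics. First, check that the two-vertex diagram $D_{k,l}^{i,j,f}$ is a finite acyclic CKSC diagram in the sense of Definition~\ref{def:finite_acyclic_cksc_diagram}:
\begin{enumerate}
\item it has two vertices and a single internal wire $((k,i),(l,j),f)$ with $f\in \Int_{\PC}(B_i,C_j)$;
\item linearity holds trivially, since only one slot of each vertex is used;
\item its graph has the single edge from $k$ to $l$, so it is acyclic;
\item its external slots are ordered so that the profiles are $\boldsymbol{\Gamma}$ and $\boldsymbol{\Delta}$.
\end{enumerate}

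Next, apply Theorem~\ref{thm:trace_semantics_cksc}: $D_{k,l}^{i,j,f}$ determines a unique Markov kernel $K_{D_{k,l}^{i,j,f}}:\boldsymbol{\Gamma}\to\boldsymbol{\Delta}$. By Proposition~\ref{prop:binary_cksc_trace_semantics}, this trace kernel equals $l\circ^f_{(i,j)}k$. Hence the CKSC composite is a Markov kernel.

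As a cross-check, one can argue directly from Definition~\ref{def:cksc}. The inner composite $\kappa_{f;B_i,C_j}\circ_{(i,1)}k$ is an ordinary KSC of two morphisms of the underlying Markov polycategory. The slots match because the $i$-th output of $k$ is $B_i$, which is the input of $\kappa_{f;B_i,C_j}$. By Corollary~\ref{cor:ksc_well_defined}, this composite is a Markov kernel with output profile $(B_1,\ldots,B_{i-1},C_j,B_{i+1},\ldots,B_n)$. Its $i$-th output is $C_j$, so a second application of Corollary~\ref{cor:ksc_well_defined}, with $l$, gives a Markov kernel $\boldsymbol{\Gamma}\to\boldsymbol{\Delta}$. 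Closure in the underlying Markov polycategory (Definition~\ref{def:markov_polycategory}) even places it in $\Hom_{\PC}(\boldsymbol{\Gamma};\boldsymbol{\Delta})$.

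There is no real obstacle. The only point needing care is the profile bookkeeping: the intermediate output profile must have $C_j$ at position $i$, so that the outer KSC along $(i,j)$ is defined and produces exactly $\boldsymbol{\Delta}$. This follows from the insertion convention with the unary interface kernel substituted in slot $i$.
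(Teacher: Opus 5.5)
Your main argument is the paper's proof: Proposition~\ref{prop:binary_cksc_trace_semantics} identifies $l\circ^f_{(i,j)}k$ with the trace kernel of $D_{k,l}^{i,j,f}$, and Theorem~\ref{thm:trace_semantics_cksc} then makes it a Markov kernel; your check that $D_{k,l}^{i,j,f}$ is a valid finite acyclic CKSC diagram spells out what the paper leaves implicit. Your cross-check by two applications of Corollary~\ref{cor:ksc_well_defined} is also correct, including the profile bookkeeping with $C_j$ in output position $i$, and it is the more elementary route because it avoids the diagram machinery.
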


\begin{proof}
By Proposition~\ref{prop:binary_cksc_trace_semantics}, the CKSC composite $l\circ^f_{(i,j)}k$ is the trace kernel of the finite acyclic CKSC diagram $D_{k,l}^{i,j,f}$. By Theorem~\ref{thm:trace_semantics_cksc}, every finite acyclic CKSC diagram determines a Markov kernel. Hence, $l\circ^f_{(i,j)}k$ is a Markov kernel.
\end{proof}

\begin{corollary}[Unitality of CKSC]
\label{cor:cksc_unitality}
Let $h:\mathbf{A}\to \mathbf{B}$ be a morphism in a CMP $\PC$, where $\mathbf{A}=(A_1,\ldots,A_m)$ and $\mathbf{B}=(B_1,\ldots,B_n)$. Then CKSC satisfies the following unit laws.

For every input slot $j\in \{1,\ldots,m\}$,
\[
h\circ^{\id_{\chi(A_j)}}_{(1,j)}\id_{A_j}=h.
\]
For every output slot $i\in \{1,\ldots,n\}$,
\[
\id_{B_i}\circ^{\id_{\chi(B_i)}}_{(i,1)}h=h.
\]
\end{corollary}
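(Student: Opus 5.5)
The plan is to reduce both unit laws to the uncolored unit laws of Corollary~\ref{cor:ksc_unitality}, using the identity law of the interface system.

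First, the identity law in Definition~\ref{def:interface_system} gives $\id_{\chi(X)}\in \Int_{\PC}(X,X)$ and $\kappa_{\id_{\chi(X)};X,X}=\id_X$ for every object $X$. So both colored composites in the statement are defined.

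For the input law, Definition~\ref{def:cksc} gives
\[
h\circ^{\id_{\chi(A_j)}}_{(1,j)}\id_{A_j}
=
h\circ_{(1,j)}\bigl(\kappa_{\id_{\chi(A_j)};A_j,A_j}\circ_{(1,1)}\id_{A_j}\bigr)
=
h\circ_{(1,j)}\bigl(\id_{A_j}\circ_{(1,1)}\id_{A_j}\bigr).
\]
The inner composite equals $\id_{A_j}$ by either unit law of Corollary~\ref{cor:ksc_unitality}, applied with $h=\id_{A_j}$. The outer composite $h\circ_{(1,j)}\id_{A_j}$ then equals $h$ by the input unit law of the same corollary.

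For the output law, Definition~\ref{def:cksc} gives
\[
\id_{B_i}\circ^{\id_{\chi(B_i)}}_{(i,1)}h
=
\id_{B_i}\circ_{(i,1)}\bigl(\kappa_{\id_{\chi(B_i)};B_i,B_i}\circ_{(i,1)}h\bigr)
=
\id_{B_i}\circ_{(i,1)}\bigl(\id_{B_i}\circ_{(i,1)}h\bigr).
\]
Applying the output unit law of Corollary~\ref{cor:ksc_unitality} twice gives $h$. At each stage, the profiles $\boldsymbol{\Gamma}$ and $\boldsymbol{\Delta}$ produced by the insertion convention coincide with those of $h$, since a unary identity replaces a single slot by the same object. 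So the equalities are equalities of kernels with identical ordered signatures.

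An alternative route is through the trace semantics. By Proposition~\ref{prop:binary_cksc_trace_semantics}, the colored composite is the trace kernel of a diagram whose interface expansion is a chain of an identity vertex, an identity interface vertex, and $h$. Integrating out Dirac measures in Definition~\ref{def:trace_measure_ksc} returns $h$.

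There is no real obstacle. The only point needing care is the observation above that the intermediate profiles match $h$'s, so that Corollary~\ref{cor:ksc_unitality} applies with the right slot indices; in the output case this means the inner composite has output slot $i$ occupied by $B_i$.
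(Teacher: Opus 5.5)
Your proposal is correct and follows the paper's proof: you use the identity law of the interface system to replace the interface kernel by $\id_{A_j}$ (resp.\ $\id_{B_i}$), unfold Definition~\ref{def:cksc}, and then apply the uncolored unit laws of Corollary~\ref{cor:ksc_unitality} twice in each case. Your remark that the intermediate composite keeps $B_i$ in output slot $i$ is a correct point of care that the paper uses implicitly.
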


\begin{proposition}[Compatibility of CKSC with trace semantics]
\label{prop:cksc_compatible_trace_semantics}
Let $D_1$ and $D_2$ be finite acyclic CKSC diagrams in a CMP $\PC$. Suppose that the $i$-th external output object of $D_1$ is $B$, and that the $j$-th external input object of $D_2$ is $C$. Let $f\in \Int_{\PC}(B,C)$. Let $D$ be the finite acyclic CKSC diagram obtained by connecting the $i$-th external output slot of $D_1$ to the $j$-th external input slot of $D_2$ through the interface color morphism $f$, with external input and output profiles formed by the insertion convention of Definition~\ref{def:cksc}. Then
\[
K_D
=
K_{D_2}\circ^f_{(i,j)}K_{D_1},
\]
where the right-hand side denotes the CKSC integral formula applied to the trace kernels $K_{D_1}$ and $K_{D_2}$ with interface kernel $\kappa_{f;B,C}$.
\end{proposition}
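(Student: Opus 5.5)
The plan is to reduce the statement to the uncolored compatibility result, Proposition~\ref{prop:ksc_compatible_trace_semantics}, by passing to interface expansions and splitting the typed wire into two untyped ones.

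\emph{Step 1: compare expansions.} By Definition~\ref{def:trace_kernel_cksc}, $K_D=K_{\widehat{D}}$, $K_{D_1}=K_{\widehat{D}_1}$ and $K_{D_2}=K_{\widehat{D}_2}$. By Definition~\ref{def:interface_expansion}, $\widehat{D}$ is obtained from the disjoint union of $\widehat{D}_1$ and $\widehat{D}_2$ by three changes:
\begin{enumerate}
\item adjoin one unary vertex $w$ labelled $\kappa_{f;B,C}$;
\item connect the $i$-th external output slot of $\widehat{D}_1$ to the input of $w$;
\item connect the output of $w$ to the $j$-th external input slot of $\widehat{D}_2$.
\end{enumerate}
The external profiles of $\widehat{D}$ are those of $D$, which follow the insertion convention.

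\emph{Step 2: first uncolored gluing.} Regard $w$ as a one-vertex KSC diagram with trace kernel $\kappa_{f;B,C}$. Let $D_1'$ be obtained from $\widehat{D}_1$ and $w$ by joining output slot $i$ to the unique input of $w$. Under the insertion convention, $D_1'$ has the same input profile as $D_1$. Its output profile is that of $D_1$ with $B$ replaced by $C$ in position $i$. Applying Proposition~\ref{prop:ksc_compatible_trace_semantics} with slot pair $(i,1)$ gives
\[
K_{D_1'}=\kappa_{f;B,C}\circ_{(i,1)}K_{D_1}.
\]

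\emph{Step 3: second uncolored gluing.} Connect the $i$-th external output of $D_1'$, which now carries object $C$, to the $j$-th external input of $\widehat{D}_2$. Since the objects coincide, the uncolored proposition applies again with slot pair $(i,j)$, giving
\[
K_{D_2}\circ_{(i,j)}\bigl(\kappa_{f;B,C}\circ_{(i,1)}K_{D_1}\bigr).
\]
By Definition~\ref{def:cksc}, this is $K_{D_2}\circ^f_{(i,j)}K_{D_1}$. One should also note that the integral formula in Definition~\ref{def:cksc} makes sense for arbitrary Markov kernels, not only morphisms of $\PC$, exactly as the uncolored proposition does.

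\emph{Step 4: identify the result with $\widehat{D}$.} It remains to check that the KSC diagram produced in Step 3 is $\widehat{D}$ itself, including the ordered external profiles. This bookkeeping is the main obstacle. The vertices and wires agree by construction. For the profiles, I would verify directly that two successive insertions agree with the single CKSC insertion:
\begin{enumerate}
\item replacing position $i$ of the output list by a single slot, then replacing it by the outputs of $D_2$, yields the list $\boldsymbol{\Delta}$;
\item the input list is unaffected by the first gluing, so it is the $\boldsymbol{\Gamma}$ formed from $D_2$'s inputs with those of $D_1$ inserted at position $j$.
\end{enumerate}
Together with Theorem~\ref{thm:trace_semantics_ksc} (independence from topological order), this yields $K_D=K_{\widehat{D}}=K_{D_2}\circ^f_{(i,j)}K_{D_1}$.
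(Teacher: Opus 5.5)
Your proposal is correct and follows the paper's proof: both pass to interface expansions, apply Proposition~\ref{prop:ksc_compatible_trace_semantics} once with slot pair $(i,1)$ to absorb the interface vertex and once with $(i,j)$ to attach $\widehat{D}_2$, and then recognize the defining formula of $\circ^f_{(i,j)}$. Your Step 4 check that the two successive insertions reproduce the profiles $\boldsymbol{\Gamma}$ and $\boldsymbol{\Delta}$ is bookkeeping the paper leaves implicit.
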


To state associativity and interchange, we use iterated CKSC expressions, in analogy with the uncolored case. A finite acyclic CKSC diagram has a trace kernel by Theorem~\ref{thm:trace_semantics_cksc}. An iterated CKSC expression provides one way of evaluating such a diagram by successive binary CKSC operations. The expression has an evaluation kernel.

\begin{definition}[Iterated CKSC expression]
\label{def:iterated_cksc_expression}
Let $D$ be a finite acyclic CKSC diagram in a CMP $\PC$. An iterated CKSC expression $E$ over $D$ is defined inductively. Each such expression $E$ has an associated subdiagram $D_E$ of $D$ and an evaluation kernel $\mathbf{I}_{D_E}\to \mathbf{O}_{D_E}$.
\begin{enumerate}
\item For every vertex $u\in V(D)$, the single vertex $u$ is an iterated CKSC expression. The associated subdiagram $D_u$ is the one-vertex subdiagram labelled by $k_u:\mathbf{A}_u\to \mathbf{B}_u$. The evaluation kernel of this expression is $k_u$.

\item Let $E_1$ and $E_2$ be iterated CKSC expressions over $D$ whose associated subdiagrams $D_{E_1}$ and $D_{E_2}$ have disjoint vertex sets. Suppose that an internal wire $e=((u,p),(v,q),f_e)$ of $D$ connects an external output slot of $D_{E_1}$ to an external input slot of $D_{E_2}$. Let $r$ be the position of this external output slot in the ordered output profile $\mathbf{O}_{D_{E_1}}$, and let $s$ be the position of this external input slot in the ordered input profile $\mathbf{I}_{D_{E_2}}$. A new iterated CKSC expression $E$ is formed from the data $(E_1,E_2,e)$. Its associated subdiagram $D_E$ is obtained by connecting $D_{E_1}$ to $D_{E_2}$ along the wire $e$, with external profiles formed by the insertion convention of Definition~\ref{def:cksc}. Its evaluation kernel is the CKSC composite $H_2\circ^{f_e}_{(r,s)}H_1$, where $H_1:\mathbf{I}_{D_{E_1}}\to \mathbf{O}_{D_{E_1}}$ and $H_2:\mathbf{I}_{D_{E_2}}\to \mathbf{O}_{D_{E_2}}$ are the evaluation kernels of $E_1$ and $E_2$, respectively.
\end{enumerate}
An iterated CKSC expression $E$ realizes $D$ if $D_E=D$, including the ordered external input and output profiles and the interface color morphisms on internal wires.
\end{definition}

\begin{corollary}[Associativity and interchange for CKSC]
\label{cor:cksc_associativity_interchange}
Let $D$ be a finite acyclic CKSC diagram in a CMP $\PC$. If $E$ is an iterated CKSC expression realizing $D$, then the evaluation kernel of $E$ is the trace kernel $K_D$. Consequently, any two iterated CKSC expressions realizing the same diagram $D$ have the same evaluation kernel. Equivalently, the evaluation of an iterated CKSC expression depends only on the CKSC diagram it realizes.
\end{corollary}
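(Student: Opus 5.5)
The plan mirrors the proof of Corollary~\ref{cor:ksc_associativity_interchange}. I will prove by structural induction on the iterated CKSC expression $E$ that its evaluation kernel equals the trace kernel $K_{D_E}$ of its associated subdiagram, in the sense of Definition~\ref{def:trace_kernel_cksc}. Both consequences then follow at once. If $E$ realizes $D$, then $D_E=D$, including the profiles and the interface color morphisms, so the evaluation kernel is $K_D$. If two expressions realize the same $D$, their evaluation kernels both equal $K_D$ and hence coincide.

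For the base case, $E$ is a single vertex $u$ and $D_E$ is the one-vertex CKSC diagram labelled by $k_u$. This diagram has no internal wires, so its interface expansion $\widehat{D_E}$ is the same one-vertex KSC diagram. By Definition~\ref{def:trace_measure_ksc}, with the unique topological ordering, the trace measure is $k_u(d\mathbf{y}\mid\mathbf{x})$ and $P_{\mathrm{out}}$ is the identity, because the external profiles of the one-vertex subdiagram are those of $k_u$. Hence $K_{D_E}=k_u$, which is the evaluation kernel.

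For the induction step, $E$ is built from $(E_1,E_2,e)$ with $e=((u,p),(v,q),f_e)$. The wire $e$ joins the $r$-th external output of $D_{E_1}$ to the $s$-th external input of $D_{E_2}$, and by Definition~\ref{def:finite_acyclic_cksc_diagram} we have $f_e\in\Int_{\PC}(B_{u,p},A_{v,q})$. By the induction hypothesis, $H_1=K_{D_{E_1}}$ and $H_2=K_{D_{E_2}}$, so the evaluation kernel of $E$ is $K_{D_{E_2}}\circ^{f_e}_{(r,s)}K_{D_{E_1}}$. The subdiagram $D_E$ is exactly the diagram obtained by connecting $D_{E_1}$ to $D_{E_2}$ along $e$ through $f_e$, with the insertion-convention profiles. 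Proposition~\ref{prop:cksc_compatible_trace_semantics} therefore applies with $B=B_{u,p}$, $C=A_{v,q}$, $f=f_e$ and slot positions $(r,s)$, and it gives $K_{D_E}=K_{D_{E_2}}\circ^{f_e}_{(r,s)}K_{D_{E_1}}$. This closes the induction.

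The main point to check is that the subdiagrams $D_{E_1}$ and $D_{E_2}$ are themselves well-formed finite acyclic CKSC diagrams, so that their trace kernels and Proposition~\ref{prop:cksc_compatible_trace_semantics} make sense. They inherit the interface morphisms on their internal wires from $D$, and they inherit linearity and acyclicity as subgraphs. The gluing of external profiles in Definition~\ref{def:iterated_cksc_expression} uses the same insertion convention as the proposition. I expect this bookkeeping to be the only obstacle, since the substantive measure-theoretic content already sits in Proposition~\ref{prop:cksc_compatible_trace_semantics} and Theorem~\ref{thm:trace_semantics_cksc}.

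An alternative check is to pass to interface expansions. One can show that $\widehat{D_E}$ is realized by an iterated KSC expression in which each CKSC step $H_2\circ^{f}_{(r,s)}H_1$ is replaced by $H_2\circ_{(r,s)}(\kappa_f\circ_{(r,1)}H_1)$, as in Definition~\ref{def:cksc}. Corollary~\ref{cor:ksc_associativity_interchange} then gives the same conclusion.
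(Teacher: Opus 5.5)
Your proposal is correct and follows the paper's own proof. Like the paper, you use structural induction on $E$: in the base case the trace kernel of the one-vertex subdiagram is $k_u$, and in the induction step Proposition~\ref{prop:cksc_compatible_trace_semantics} identifies $K_{D_{E_2}}\circ^{f_e}_{(r,s)}K_{D_{E_1}}$ with $K_{D_E}$. Your additional checks are sound but go beyond what the paper does: the explicit interface-expansion computation in the base case, the well-formedness of subdiagrams, and the alternative reduction to Corollary~\ref{cor:ksc_associativity_interchange}.
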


\begin{proof}
We prove the statement by induction on the construction of the iterated CKSC expression $E$.

If $E$ is a single vertex $u$, then $D_E$ is the one-vertex diagram labelled by $k_u$. The evaluation kernel of $E$ is $k_u$. The trace kernel of $D_E$ is also $k_u$. Hence the evaluation kernel of $E$ is $K_{D_E}$.

For the induction step, suppose that $E$ is formed from the data $(E_1,E_2,e)$, where $e=((u,p),(v,q),f_e)$ connects the $r$-th external output slot of $D_{E_1}$ to the $s$-th external input slot of $D_{E_2}$. Let $H_1$ and $H_2$ be the evaluation kernels of $E_1$ and $E_2$, respectively. By definition, the evaluation kernel of $E$ is $H_2\circ^{f_e}_{(r,s)}H_1$. By the induction hypothesis, $H_1=K_{D_{E_1}}$, $H_2=K_{D_{E_2}}$. Therefore, the evaluation kernel of $E$ is $K_{D_{E_2}}\circ^{f_e}_{(r,s)}K_{D_{E_1}}$. By Proposition~\ref{prop:cksc_compatible_trace_semantics}, this kernel is the trace kernel of the subdiagram obtained by connecting $D_{E_1}$ and $D_{E_2}$ along the wire $e$. This subdiagram is $D_E$. Hence, the evaluation kernel of $E$ is $K_{D_E}$.

If $E$ realizes $D$, then $D_E=D$, so the evaluation kernel of $E$ is $K_D$. If $E$ and $E'$ are two iterated CKSC expressions realizing the same diagram $D$, then both evaluation kernels are equal to $K_D$. Hence, the evaluation of an iterated CKSC expression depends only on the CKSC diagram it realizes.
\end{proof}

Corollary~\ref{cor:cksc_associativity_interchange} states that binary CKSC evaluation is independent of the chosen iterated expression realizing a diagram. Associativity and interchange are the special cases in which two different iterated CKSC expressions realize the same finite acyclic CKSC diagram. Associativity corresponds to different bracketings of successive slotwise compositions. Interchange corresponds to different orders of independent slotwise compositions. In both cases, the resulting kernels are equal because both evaluations give the trace kernel of the same diagram.

\begin{example}[A typed diagnosis-treatment workflow]
\label{ex:cksc_diagnosis_treatment}
This toy example illustrates CKSC as a typed probabilistic composition. Let $\PC$ be a CMP with objects $\mathsf{Pat}$, $\mathsf{Bio}$, $\mathsf{Diag}$, and $\mathsf{Treat}$. Their underlying spaces are $\underline{\mathsf{Pat}} = \mathbb{N}$, $\underline{\mathsf{Bio}} = \mathbb{R}$, $\underline{\mathsf{Diag}} = \mathbb{D} = \{\mathsf{bacterial},\mathsf{viral}\}$, $\underline{\mathsf{Treat}} = \mathbb{T} = \{\mathsf{antibiotic},\mathsf{supportive}\}$. The finite sets are equipped with their power-set $\sigma$-algebras, and $\mathbb{R}$ is equipped with its Borel $\sigma$-algebra.

Let $k:(\mathsf{Pat})\to(\mathsf{Bio})$ be a biomarker kernel. For simplicity, assume that the biomarker value is a standardized real-valued score $z$ whose conditional law does not depend on the patient index $p\in \mathbb{N}$. Thus, $k(dz\mid p)=\phi(z)\,dz$, where $\phi$ is the standard normal density.

Let $c_{\mathsf{bio}}=\chi(\mathsf{Bio})$ and $c_{\mathsf{diag}}=\chi(\mathsf{Diag})$. Suppose that there is an interface color morphism $f\in \Int_{\PC}(\mathsf{Bio},\mathsf{Diag})$ with associated interface kernel $\kappa_{f;\mathsf{Bio},\mathsf{Diag}}:\mathsf{Bio}\to \mathsf{Diag}$. Define this interface kernel by
\[
\kappa_{f;\mathsf{Bio},\mathsf{Diag}}(\{d\}\mid z)
=
\begin{cases}
\sigma(z), & d=\mathsf{bacterial},\\
1-\sigma(z), & d=\mathsf{viral},
\end{cases}
\]
for $d\in \mathbb{D}$ and $z\in \mathbb{R}$, where $\sigma(z)=1/(1+\exp(-z))$.

Finally, let $l:(\mathsf{Diag})\to(\mathsf{Treat})$ be the deterministic treatment kernel $l(d\tau\mid d)=\delta_{r(d)}(d\tau)$, where $r(\mathsf{bacterial})=\mathsf{antibiotic}$ and $r(\mathsf{viral})=\mathsf{supportive}$.

The CKSC composite $h=l\circ^f_{(1,1)}k$ has signature $h:(\mathsf{Pat})\to(\mathsf{Treat})$. For every $E\subseteq \mathbb{T}$ and $p\in \mathbb{N}$, Definition~\ref{def:cksc} gives
\[
h(E\mid p)
=
\int_{\mathbb{R}}
\sum_{d\in \mathbb{D}}
\sum_{\tau\in \mathbb{T}}
\mathbf{1}_E(\tau)\,
l(\{\tau\}\mid d)\,
\kappa_{f;\mathsf{Bio},\mathsf{Diag}}(\{d\} \mid z)\,
\phi(z)\,dz.
\]
Since $l$ is deterministic, this reduces to
\[
h(E\mid p)
=
\int_{\mathbb{R}}
\sum_{d\in \mathbb{D}}
\mathbf{1}_E(r(d))\,
\kappa_{f;\mathsf{Bio},\mathsf{Diag}}(\{d\} \mid z)\,
\phi(z)\,dz.
\]
In particular,
\[
h(\{\mathsf{antibiotic}\}\mid p)
=
\int_{\mathbb{R}}\sigma(z)\phi(z)\,dz
=
\frac{1}{2}.
\]
The last equality follows from the symmetry of the standard normal density and the identity $\sigma(z)+\sigma(-z)=1$.

The example shows how CKSC separates two roles. The morphism $k$ produces a real-valued biomarker. The interface kernel determined by $f$ converts this value into a distribution on a different typed object, namely diagnoses. The morphism $l$ then maps diagnoses to treatments. The intermediate biomarker and diagnosis variables are integrated out in the composite kernel $h$.
\end{example}

\section{Co-indexed colored Markov polycategories}
\label{sec:ccmp}

This section introduces co-indexed colored Markov polycategories. The purpose of co-indexing is to organize a family of CMPs over an indexing category $\T$. Each object $t\in \Ob(\T)$ carries a CMP $\PC_t$, and each morphism $\alpha:t\to t'$ gives a structure-preserving pushforward from $\PC_t$ to $\PC_{t'}$. Proofs not included in the main text of this section are given in Appendix~\ref{app:functorial_proofs}.

\subsection{Definition of co-indexed colored Markov polycategory}
\label{subsec:def_ccmp}

We first prove that CMPs over a fixed color system form a category. This is the target category for the co-indexed state functor.

\begin{proposition}[Category of CMPs]
\label{prop:category_of_cmps}
Fix a color system $(\K,\M,\iota)$. CMPs over $(\K,\M,\iota)$ and CMP-functors between them form a category. This category is denoted by $\mathbf{CMP}_{\K,\M,\iota}$.
\end{proposition}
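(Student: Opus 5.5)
We have to show that identity CMP-functors exist, that the composite of two CMP-functors is again a CMP-functor, and that composition is associative and unital. Throughout, objects are CMPs over the fixed color system $(\K,\M,\iota)$ and morphisms are CMP-functors in the sense of Definition~\ref{def:cmp_functor}.

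\emph{Identities.} For a CMP $\PC$, define $\Id_{\PC}$ by $X\mapsto X$ and $k\mapsto k$. It preserves identity kernels and KSC trivially. It preserves colors because $\chi(X)=\chi(X)$ and $\psi(k)=\psi(k)$. It preserves interface systems because $f\in\Int_{\PC}(B,C)$ gives $f\in\Int_{\PC}(B,C)$ and $\Id_{\PC}(\kappa_{f;B,C})=\kappa_{f;B,C}$.

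\emph{Composition.} Given $G:\PC\to\PC'$ and $H:\PC'\to\PC''$, define $H\circ G$ componentwise by $X\mapsto H(GX)$ and $k\mapsto H(Gk)$. If $k:\mathbf{A}\to\mathbf{B}$, then $Gk:G\mathbf{A}\to G\mathbf{B}$ and $H(Gk):HG\mathbf{A}\to HG\mathbf{B}$, because tuples are transported componentwise. We then check each clause of Definition~\ref{def:cmp_functor} in turn.
\begin{enumerate}
\item Identities: $HG(\id_X)=H(\id_{GX})=\id_{HGX}$.
\item KSC: if $l\circ_{(i,j)}k$ is defined in $\PC$, then $B_i=C_j$ forces $GB_i=GC_j$, so $Gl\circ_{(i,j)}Gk$ is defined in $\PC'$ and equals $G(l\circ_{(i,j)}k)$. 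Applying $H$ gives $HG(l\circ_{(i,j)}k)=HGl\circ_{(i,j)}HGk$.
\item Colors: $\chi''(HGX)=\chi'(GX)=\chi(X)$, and similarly for $\psi$.
\item Interfaces: for $f\in\Int_{\PC}(B,C)$, first $f\in\Int_{\PC'}(GB,GC)$ with $G\kappa_{f;B,C}=\kappa'_{f;GB,GC}$. Then $f\in\Int_{\PC''}(HGB,HGC)$, and $H\kappa'_{f;GB,GC}=\kappa''_{f;HGB,HGC}$.
\end{enumerate}

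\emph{Category laws.} Associativity and unitality hold because composition is function composition on object assignments and on morphism assignments separately. These laws hold strictly for functions, and two CMP-functors are equal when both assignments agree.

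The one delicate point is the KSC clause. The composite $Gl\circ_{(i,j)}Gk$ must actually be defined in $\PC'$, which requires the literal equality $GB_i=GC_j$; this follows from $B_i=C_j$ because $G$ acts as a function on objects. The fixed color system is essential for the color clause: it lets the color equalities chain through the intermediate CMP $\PC'$ without any transport of colors. Everything else is a routine unfolding of the definitions.
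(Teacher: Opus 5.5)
Your proposal is correct and follows essentially the same route as the paper. Both define the identity and componentwise composite CMP-functors, verify preservation of identities, KSC, object and morphism colors, and interface kernels by chaining through the intermediate CMP $\PC'$, and get associativity and unitality from composition of the underlying object and morphism assignments (your remark that $GB_i=GC_j$ makes $Gl\circ_{(i,j)}Gk$ defined is a detail the paper leaves implicit).
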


We now define the dynamic structure used in the rest of the paper. A co-indexed colored Markov polycategory consists of a state functor, which transports CMPs, and a parameter functor, which transports parameter spaces.

\begin{definition}[Co-indexed colored Markov polycategory]
\label{def:ccmp}
Fix a color system $(\K,\M,\iota)$, and let $\mathbf{CMP}_{\K,\M,\iota}$ be the category of CMPs over this color system. Let $\T$ be an indexing category. Let $\ThetaC$ be the category whose objects are finite-dimensional real vector spaces and whose morphisms are differentiable maps.

A co-indexed colored Markov polycategory (CCMP) over $\T$ consists of two strict functors $F_{\PC}:\T\to \mathbf{CMP}_{\K,\M,\iota},~F_\theta:\T\to \ThetaC$.

For each object $t\in \Ob(\T)$, write $\PC_t=F_{\PC}(t),~\Theta_t=F_\theta(t)$. The CMP $\PC_t$ is called the state CMP at $t$, and $\Theta_t$ is called the parameter space at $t$.

For each morphism $\alpha:t\to t'$ in $\T$, write $\alpha_{!,s}=F_{\PC}(\alpha):\PC_t\to \PC_{t'}$ and $\alpha_{!,\theta}=F_\theta(\alpha):\Theta_t\to \Theta_{t'}$. The functor $\alpha_{!,s}$ is called the state pushforward along $\alpha$, and the differentiable map $\alpha_{!,\theta}$ is called the parameter pushforward along $\alpha$.

Strict functoriality means that, for every $t\in \Ob(\T)$, $(\id_t)_{!,s}=\Id_{\PC_t},~(\id_t)_{!,\theta}=\id_{\Theta_t}$, and, for every composable pair $t\xrightarrow{\alpha}t'\xrightarrow{\beta}t''$, $(\beta\circ \alpha)_{!,s} = \beta_{!,s}\circ \alpha_{!,s}$ and $(\beta\circ \alpha)_{!,\theta} = \beta_{!,\theta}\circ \alpha_{!,\theta}$.
\end{definition}

The state functor $F_{\PC}$ specifies how typed stochastic systems are transported along transitions in $\T$. Since its morphisms are CMP-functors, state pushforwards preserve object colors, morphism colors, interface kernels, and KSC. The parameter functor $F_\theta$ records how parameter spaces transform along the same transitions. The two functors are kept separate because the categorical structure of a state and the differentiable structure of its parameter space play different roles.

\subsection{Preservation of diagram semantics under CMP-functors}
\label{subsec:cmp_functors_preserve_semantics}

CMP-functors preserve typed diagrams and their binary CKSC evaluations. We state this for diagrams that are realized by iterated CKSC expressions (Definition~\ref{def:iterated_cksc_expression}). This is the case in which the trace kernel of the diagram is represented as a morphism obtained by successive CKSC operations.

Let $G:\PC\to \PC'$ be a CMP-functor. If $D$ is a finite acyclic CKSC diagram in $\PC$, its image $G D$ is the finite acyclic CKSC diagram in $\PC'$ obtained by applying $G$ to every vertex label and leaving every interface color morphism unchanged. Thus, if a vertex of $D$ is labelled by $k_u:\mathbf{A}_u\to \mathbf{B}_u$, then the corresponding vertex of $G D$ is labelled by $Gk_u:G\mathbf{A}_u\to G\mathbf{B}_u$. If an internal wire of $D$ is labelled by $f\in \Int_{\PC}(B,C)$, then the corresponding internal wire of $G D$ is labelled by the same interface color morphism $f\in \Int_{\PC'}(GB,GC)$, which is well-defined because $G$ preserves interface systems.

A finite acyclic CKSC diagram is called CKSC-reducible if it is realized by an iterated CKSC expression. For such a diagram $D$, Corollary~\ref{cor:cksc_associativity_interchange} shows that every iterated CKSC expression realizing $D$ has the same evaluation kernel, namely the trace kernel $K_D$. When this kernel is regarded as the morphism obtained by successive CKSC operations, we denote it by $\langle D\rangle_{\PC}$.

\begin{proposition}[CMP-functors preserve CKSC reductions]
\label{prop:cmp_functors_preserve_cksc_reductions}
Let $G:\PC\to \PC'$ be a CMP-functor. If $D$ is a CKSC-reducible finite acyclic CKSC diagram in $\PC$, then $GD$ is CKSC-reducible in $\PC'$, and $G\langle D\rangle_{\PC} = \langle GD\rangle_{\PC'}$. Consequently, the Markov kernel underlying $G\langle D\rangle_{\PC}$ is the trace kernel of $GD$.
\end{proposition}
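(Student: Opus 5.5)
The plan is induction on the construction of an iterated CKSC expression $E$ realizing $D$, transporting $E$ along $G$ step by step. For each iterated CKSC expression $E$ over $D$, define an expression $GE$ over $GD$ by the same recursion. A single vertex $u$ goes to the vertex $u$ of $GD$, now labelled $Gk_u$. A composite formed from $(E_1,E_2,e)$ with $e=((u,p),(v,q),f_e)$ goes to the composite formed from $(GE_1,GE_2,Ge)$, where $Ge$ is the same wire with the same interface color morphism $f_e$.

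First I would check that $GE$ is a legitimate iterated CKSC expression and that $D_{GE}=G(D_E)$. This is purely combinatorial. $G$ acts on vertex labels and leaves the vertex sets, slot indices, wires and ordering data unchanged. Disjointness of vertex sets, the positions $r,s$ of the connected slots in the ordered profiles, and the insertion convention are therefore the same in $GD$ as in $D$. The profiles of $G(D_E)$ are obtained by applying $G$ componentwise to those of $D_E$. Well-typedness of $Ge$ follows because $G$ preserves interface systems, so $f_e\in\Int_{\PC'}(GB_{u,p},GA_{v,q})$. In particular, if $E$ realizes $D$ then $GE$ realizes $GD$, so $GD$ is CKSC-reducible.

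Next I would prove, by induction on $E$, that the evaluation kernel of $GE$ equals $G$ applied to the evaluation kernel of $E$. The base case is immediate. In the inductive step, the evaluation kernel of $E$ is
\[
H_2\circ^{f_e}_{(r,s)}H_1=H_2\circ_{(s',s)}\bigl(\kappa_{f_e;B,C}\circ_{(r,1)}H_1\bigr),
\]
by Definition~\ref{def:cksc}. Here $s'$ is the slot index arising from the insertion convention. Since $H_1$, $H_2$ and the interface kernel are morphisms of the Markov polycategory, all these KSC composites lie in $\PC$ by Definition~\ref{def:markov_polycategory}. Hence $G$ may be applied to them. Preservation of KSC, used twice, gives $GH_2\circ_{(s',s)}(G\kappa_{f_e;B,C}\circ_{(r,1)}GH_1)$. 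Preservation of interfaces then replaces $G\kappa_{f_e;B,C}$ by $\kappa'_{f_e;GB,GC}$, which is exactly $GH_2\circ^{f_e}_{(r,s)}GH_1$. The induction hypothesis identifies $GH_1$ and $GH_2$ with the evaluation kernels of $GE_1$ and $GE_2$.

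Applying this to $E$ realizing $D$ gives $G\langle D\rangle_{\PC}=\mathrm{ev}(GE)$. By Corollary~\ref{cor:cksc_associativity_interchange} in $\PC'$, $\mathrm{ev}(GE)=\langle GD\rangle_{\PC'}=K_{GD}$, which gives both conclusions.

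I expect the main obstacle to be the bookkeeping in the first step. One must verify that the index $s'$ in the unfolded CKSC definition and the slot positions $r,s$ computed from ordered external profiles are preserved by $G$. This needs care with the insertion convention, but it holds because $G$ is identity on all slot and ordering data.
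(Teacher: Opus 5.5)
Your proposal is correct and follows the paper's proof essentially step for step. Both arguments run an induction on an iterated CKSC expression realizing $D$, unfold CKSC via Definition~\ref{def:cksc}, and use closure of the Markov polycategory under KSC so that $G$ can be applied to evaluation kernels. Both then invoke preservation of KSC and of interface kernels, and conclude with Corollary~\ref{cor:cksc_associativity_interchange} in $\PC'$.

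One cosmetic point: your index $s'$ is simply $r$. By the insertion convention, the output of the unary kernel $\kappa_{f_e;B,C}$ sits at position $r$ in the output profile of $\kappa_{f_e;B,C}\circ_{(r,1)}H_1$, and this is how the paper writes it. Your explicit check that $D_{GE}=G(D_E)$ usefully expands the paper's one-line claim that the image expression realizes $GD$.
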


\subsection{State pushforwards and parameter pushforwards}
\label{subsec:state_parameter_pushforwards}

Let $(F_{\PC},F_\theta)$ be a CCMP over $\T$. A morphism $\alpha:t\to t'$ in $\T$ acts on the state CMP by the CMP-functor $\alpha_{!,s}:\PC_t\to \PC_{t'}$. Thus, every object $X\in \Ob(\PC_t)$ has an image $\alpha_{!,s}X\in \Ob(\PC_{t'})$, and every morphism $k:\mathbf{A}\to \mathbf{B}$ in $\PC_t$ has an image $\alpha_{!,s}k:\alpha_{!,s}\mathbf{A}\to \alpha_{!,s}\mathbf{B}$ in $\PC_{t'}$.

If $D$ is a finite acyclic CKSC diagram in $\PC_t$, its state pushforward along $\alpha$ is the finite acyclic CKSC diagram $\alpha_{!,s}D$ in $\PC_{t'}$ obtained by applying $\alpha_{!,s}$ to every object and vertex kernel of $D$, while preserving the interface color morphisms as required by Definition~\ref{def:cmp_functor}. The ordered external input and output profiles of $\alpha_{!,s}D$ are the images of the ordered external profiles of $D$.

\begin{proposition}[Functoriality of state pushforwards on diagrams]
\label{prop:state_pushforward_diagrams_functorial}
Let $(F_{\PC},F_\theta)$ be a CCMP over $\T$. Let $D$ be a finite acyclic CKSC diagram in $\PC_t$. First, $(\id_t)_{!,s}D=D$. Second, for every composable pair $t\xrightarrow{\alpha}t'\xrightarrow{\beta}t''$, $(\beta\circ\alpha)_{!,s}D=\beta_{!,s}(\alpha_{!,s}D)$. If $D$ is CKSC-reducible, then $\alpha_{!,s}\langle D\rangle_{\PC_t} = \langle \alpha_{!,s}D\rangle_{\PC_{t'}}$.
\end{proposition}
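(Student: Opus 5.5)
The plan is to reduce everything to strict functoriality of $F_{\PC}$ and to Proposition~\ref{prop:cmp_functors_preserve_cksc_reductions}. Recall that the diagram pushforward $\alpha_{!,s}D$ is defined componentwise. It keeps the vertex set, the internal wires (including their slot indices and interface color morphisms) and the orderings of external slots. It replaces each vertex label $k_u$ by $\alpha_{!,s}k_u$ and each object by its image. Two diagrams are therefore equal exactly when they have the same combinatorial skeleton and agree label by label. So both identities reduce to identities between CMP-functors applied to individual objects and kernels.

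For the first claim, strict functoriality of $F_{\PC}$ gives $(\id_t)_{!,s}=\Id_{\PC_t}$. This functor fixes every object and every kernel, so each vertex label $k_u$ is sent to itself and each object profile is unchanged. The interface color morphisms are untouched by construction. Hence $(\id_t)_{!,s}D=D$.

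For the second claim, $F_{\PC}(\beta\circ\alpha)=F_{\PC}(\beta)\circ F_{\PC}(\alpha)$, and composition of CMP-functors in $\mathbf{CMP}_{\K,\M,\iota}$ is componentwise (Proposition~\ref{prop:category_of_cmps}). Therefore $(\beta\circ\alpha)_{!,s}k_u=\beta_{!,s}(\alpha_{!,s}k_u)$ for every vertex, and the same holds for every object in the external profiles. The wire data agree on both sides, with the same slots and the same $f_e$. The only point to check is well-definedness: $f_e$ must lie in $\Int_{\PC_{t''}}$ of the transported objects. This follows from applying the interface-preservation clause of Definition~\ref{def:cmp_functor} twice. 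Both diagrams then have identical components, so they are equal.

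For the third claim, $\alpha_{!,s}$ is a CMP-functor, so Proposition~\ref{prop:cmp_functors_preserve_cksc_reductions} applies directly with $G=\alpha_{!,s}$. It yields that $\alpha_{!,s}D$ is CKSC-reducible and that $\alpha_{!,s}\langle D\rangle_{\PC_t}=\langle\alpha_{!,s}D\rangle_{\PC_{t'}}$. I expect no real obstacle. The only delicate step is making precise that diagram equality means componentwise equality, including the ordered external profiles and the transported interface memberships. I would state this explicitly at the start of the proof.
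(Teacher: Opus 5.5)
Your proposal is correct and takes essentially the same route as the paper. The first two identities come from strict functoriality of $F_{\PC}$ together with componentwise composition of CMP-functors, and the third is Proposition~\ref{prop:cmp_functors_preserve_cksc_reductions} applied to $G=\alpha_{!,s}$; your explicit remarks on componentwise diagram equality and on transporting interface membership twice fill in details the paper leaves implicit without changing the argument.
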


\begin{proof}
The first two equations follow directly from strict functoriality of $F_{\PC}$. The identity morphism of $\T$ is sent to the identity CMP-functor, and a composite $\beta\circ\alpha$ is sent to the composite CMP-functor $\beta_{!,s}\circ \alpha_{!,s}$. Applying these equalities to every object, vertex kernel, and interface color morphism of $D$ gives $(\id_t)_{!,s}D=D$ and, for every composable pair $t\xrightarrow{\alpha}t'\xrightarrow{\beta}t''$, $(\beta\circ\alpha)_{!,s}D=\beta_{!,s}(\alpha_{!,s}D)$.

If $D$ is CKSC-reducible, then $\alpha_{!,s}\langle D\rangle_{\PC_t} = \langle \alpha_{!,s}D\rangle_{\PC_{t'}}$ follows from Proposition~\ref{prop:cmp_functors_preserve_cksc_reductions}, applied to the CMP-functor $\alpha_{!,s}:\PC_t\to \PC_{t'}$.
\end{proof}

The parameter pushforward along the same transition is the differentiable map $\alpha_{!,\theta}:\Theta_t\to \Theta_{t'}$. It is independent of the state pushforward as a map, but it is indexed by the same transition $\alpha$. Strict functoriality gives $(\id_t)_{!,\theta}=\id_{\Theta_t}$ and $(\beta\circ\alpha)_{!,\theta}=\beta_{!,\theta}\circ \alpha_{!,\theta}$ for every composable pair $t\xrightarrow{\alpha}t'\xrightarrow{\beta}t''$. Hence, parameters are transported consistently along multi-step transitions in $\T$.

\begin{example}[Dynamic graph systems as CCMPs]
\label{ex:dynamic_graph_ccmp}
A simple class of examples is given by dynamic graph systems. To avoid irrelevant deletion issues, consider first a graph process whose transitions are graph inclusions. Thus, the indexing category $\T$ has objects finite graphs $G=(V,E)$, and a morphism $\alpha:G\to G'$ is an inclusion of $G$ as a subgraph of $G'$.

Fix object colors $c_V,~c_E,~c_G$ for vertex features, edge features, and global graph-level features. For a graph $G=(V,E)$, let $\PC_G$ be a CMP whose objects include a vertex-feature object $X_v$ of color $c_V$ for each $v\in V$, an edge-feature object $X_e$ of color $c_E$ for each $e\in E$, and possibly a global-context object $X_G$ of color $c_G$. Typical underlying spaces are Euclidean spaces such as $\mathbb{R}^{d_V}$, $\mathbb{R}^{d_E}$, and $\mathbb{R}^{d_G}$ with their Borel $\sigma$-algebras.

Morphisms in $\PC_G$ may represent local stochastic update kernels. For example, if $e=(u,v)$ is an edge, one possibility is to include a kernel $k_e:(X_u,X_e,X_v)\to (X'_e)$ which updates the edge feature from the features of its incident vertices and its current edge feature. Similarly, a vertex update kernel may have the form $k_v:(X_v,\mathbf{X}_{N(v)})\to (X'_v)$, where $\mathbf{X}_{N(v)}$ is an ordered tuple of feature objects associated with the neighbors of $v$. The ordering is part of the chosen model structure.

For an inclusion $\alpha:G\to G'$, the state pushforward $\alpha_{!,s}:\PC_G\to \PC_{G'}$ maps the object $X_v$ of a persistent vertex $v\in V$ to the corresponding object in $\PC_{G'}$, and similarly for persistent edge objects. Kernels whose local incidence pattern is preserved are mapped to the corresponding kernels in $\PC_{G'}$. New vertices and edges in $G'$ are not images of old objects. They belong to the larger target CMP $\PC_{G'}$.

A parameter space $\Theta_G$ may contain the parameters of the update kernels used in $\PC_G$. The parameter pushforward $\alpha_{!,\theta}:\Theta_G\to \Theta_{G'}$ may copy shared parameters for persistent kernel types and initialize parameters for kernels associated with new vertices or edges. The functoriality equations in Definition~\ref{def:ccmp} express that performing two graph inclusions in sequence transports state structure and parameters in the same way as their composite inclusion.

This example is deliberately schematic. Its role is to show how the CCMP formalism separates the typed stochastic structure at each graph state from the functorial transport of that structure along graph transitions.
\end{example}

\section{Diagrammatic differentiation}
\label{sec:diagrammatic_differentiation}

This section develops a bounded differentiation result for finite acyclic CKSC diagrams. The shape of the diagram and its interface color morphisms are fixed. Parameters vary only the vertex kernels. Interface kernels are treated as fixed structure. A learned interface kernel can instead be represented as a parameterized vertex. Proofs not included in the main text of this section are given in Appendix~\ref{app:differentiation_proofs}.

The goal is to define an expected scalar objective for a parameterized CKSC diagram and to state conditions under which its gradient can be computed by a reverse traversal of the diagram.

\subsection{Parameterized CKSC diagrams}
\label{subsec:parameterized_cksc_diagrams}

Let $\PC$ be a CMP. A parameterized kernel is a family of kernels with a fixed source profile, target profile, and morphism color.

\begin{definition}[Parameterized kernel]
\label{def:parameterized_kernel}
Let $\Theta$ be a finite-dimensional real vector space. Let $\mathbf{A}$ and $\mathbf{B}$ be finite ordered tuples of objects of $\PC$. A parameterized kernel of type $\mathbf{A}\to \mathbf{B}$ with parameter space $\Theta$ is a family $\{k_\theta:\mathbf{A}\to \mathbf{B}\}_{\theta\in \Theta}$ such that each $k_\theta$ is a morphism of the underlying Markov polycategory of $\PC$.

The family is required to be jointly measurable, that is, for every $E\in \Sigma_{\underline{\mathbf{B}}}$, the map $(\theta,\mathbf{a})\mapsto k_\theta(E\mid \mathbf{a})$ from $\Theta\times \underline{\mathbf{A}}$ to $[0,1]$ is measurable.

The family has constant morphism color if there is a morphism color $m\in \Hom_{\M}(\chi(\mathbf{A});\chi(\mathbf{B}))$ such that $\psi(k_\theta)=m$ for every $\theta\in \Theta$.
\end{definition}

In the rest of this section, parameterized kernels are assumed to have constant morphism color. This keeps the typed structure of a diagram fixed while the numerical kernels vary.

\begin{definition}[Parameterized CKSC diagram]
\label{def:parameterized_cksc_diagram}
A parameterized CKSC diagram $D$ in $\PC$ consists of the following data.
\begin{enumerate}
\item There is a finite acyclic CKSC diagram shape. Thus, $D$ has a finite vertex set $V(D)$, fixed input and output profiles at each vertex, fixed internal wires, fixed interface color morphisms on internal wires, and fixed ordered external input and output profiles $\mathbf{I}_D$ and $\mathbf{O}_D$.
\item For every vertex $u\in V(D)$, there is a finite-dimensional real vector space $\Theta_u$ and a parameterized kernel $\{k_{u,\theta_u}:\mathbf{A}_u\to \mathbf{B}_u\}_{\theta_u\in \Theta_u}$.
\end{enumerate}
The total parameter space of $D$ is $\Theta_D=\prod_{u\in V(D)}\Theta_u$. For $\theta=(\theta_u)_{u\in V(D)}\in \Theta_D$, the instantiated diagram $D_\theta$ is the finite acyclic CKSC diagram obtained by labelling each vertex $u$ by the kernel $k_{u,\theta_u}$ and keeping all internal interface color morphisms fixed.
\end{definition}

A vertex $u\in V(D)$ is called parameterized if $\Theta_u$ is not the zero vector space. A vertex whose parameter space is the zero space carries a fixed kernel and contributes a trivial factor to $\Theta_D$.

By Theorem~\ref{thm:trace_semantics_cksc}, every instantiated diagram $D_\theta$ determines a Markov kernel $K_D^\theta:\mathbf{I}_D\to \mathbf{O}_D$. The superscript $\theta$ indicates that the trace kernel depends on the vertex parameters. The ordered diagram shape and its interface color morphisms are not varied.

\subsection{Objective kernels and expected objectives}
\label{subsec:objective_kernels_expected_objectives}

To differentiate a parameterized diagram, its output is turned into a scalar random variable. We do this by adding a deterministic objective kernel. This covers the supervised case, where the diagram output is a prediction and the reference variable is a label.

\begin{definition}[Objective kernel]
\label{def:objective_kernel}
Let $V$ be an object with underlying measurable space $(\mathbb{R},\mathcal{B}(\mathbb{R}))$. Let $\mathbf{O}$ and $\mathbf{R}$ be finite ordered tuples of objects of a CMP $\PC$. An objective function is a measurable map $f_O:\underline{\mathbf{O}}\times \underline{\mathbf{R}}\to \mathbb{R}$. The corresponding objective kernel is the deterministic Markov kernel $k_O:(\mathbf{O},\mathbf{R})\to (V)$ defined by $k_O(dv\mid \mathbf{o},\mathbf{r})=\delta_{f_O(\mathbf{o},\mathbf{r})}(dv)$.
\end{definition}

The tuple $\mathbf{O}$ is the external output profile of a parameterized diagram. The tuple $\mathbf{R}$ is a reference profile. It may contain labels, targets, or other variables relative to which the diagram output is evaluated. If no reference variables are needed, $\mathbf{R}$ is taken to be the empty tuple.

\begin{definition}[Expected objective of a parameterized diagram]
\label{def:expected_objective}
Let $D$ be a parameterized CKSC diagram in a CMP $\PC$, with external input profile $\mathbf{I}_D$ and external output profile $\mathbf{O}_D$. Let $\mathbf{R}$ be a finite ordered tuple of reference objects. Let $\rho$ be a probability measure on $\underline{\mathbf{I}_D}\times \underline{\mathbf{R}}$. Let $f_O:\underline{\mathbf{O}_D}\times \underline{\mathbf{R}}\to \mathbb{R}$ be an objective function.

For $\theta\in \Theta_D$, define the scalar random objective $J$ by the following sampling procedure:
\begin{align*}
(\mathbf{X},\mathbf{R}) &\sim \rho,\\
\mathbf{Y} &\sim K_D^\theta(\,\cdot \mid \mathbf{X}),\\
J &= f_O(\mathbf{Y},\mathbf{R}).
\end{align*}
Here, $\mathbf{X}$ is the external input random tuple, $\mathbf{R}$ is the reference random tuple, and $\mathbf{Y}$ is the external output random tuple of the diagram.

The expected objective of $D$ at parameter value $\theta$ is
\[
\mathcal{L}_D(\theta)
=
\mathbb{E}[J]
=
\int_{\underline{\mathbf{I}_D}\times \underline{\mathbf{R}}}
\left(
\int_{\underline{\mathbf{O}_D}}
f_O(\mathbf{y},\mathbf{r})\,
K_D^\theta(d\mathbf{y}\mid \mathbf{x})
\right)
\rho(d\mathbf{x},d\mathbf{r}).
\]
\end{definition}

The probability measure $\rho$ is not part of the CMP structure. It is external data for the learning problem. In supervised learning, $\rho$ is the data-generating distribution on inputs and labels. In a deterministic objective such as squared error or cross-entropy, $f_O$ is the loss function and $\mathcal{L}_D$ is the expected loss.

The notation $J$ is reserved for the scalar random objective. A realized scalar value is denoted by $v$ or $\ell$. The function $\mathcal{L}_D:\Theta_D\to \mathbb{R}$ is the quantity whose gradient is studied.

\subsection{Conditional expected future objectives}
\label{subsec:conditional_expected_future_objectives}

The reverse rules below use conditional expectations of the scalar objective given selected trace variables. We define these quantities with respect to the joint trace law of an instantiated parameterized diagram.

Let $D$ be a parameterized CKSC diagram, let $\theta\in \Theta_D$, and let $D_\theta$ be the corresponding instantiated CKSC diagram. Let $\widehat{D}_\theta$ be its interface expansion. Write $\widehat{V}(D)$ for the vertex set of the interface expansion. This set contains the original vertices of $D$ and the additional unary interface vertices.

Choose a topological ordering of $\widehat{V}(D)$. By Definition~\ref{def:trace_measure_ksc}, this gives a conditional trace measure on the product of the output spaces of all vertices of $\widehat{D}_\theta$. Since the variables are labelled by vertices, we regard this measure as a measure on the labelled product $\prod_{u\in \widehat{V}(D)} \underline{\mathbf{B}_u}$. The adjacent-swap argument in the proof of Theorem~\ref{thm:trace_semantics_ksc} shows that this labelled joint law is independent of the chosen topological ordering.

Let $\rho$ be the probability measure on $\underline{\mathbf{I}_D}\times \underline{\mathbf{R}}$ from Definition~\ref{def:expected_objective}. The full trace law of the expected-objective problem at parameter value $\theta$ is the probability measure $\nu_D^\theta$ on $\underline{\mathbf{I}_D}\times \underline{\mathbf{R}}\times \prod_{u\in \widehat{V}(D)} \underline{\mathbf{B}_u}$ defined by
\[
\nu_D^\theta(d\mathbf{x},d\mathbf{r},d(\mathbf{y}_u)_{u\in \widehat{V}(D)})
=
\rho(d\mathbf{x},d\mathbf{r})\,
\mu_{\widehat{D}}^\theta(d(\mathbf{y}_u)_{u\in \widehat{V}(D)}\mid \mathbf{x}),
\]
where $\mu_{\widehat{D}}^\theta(\,\cdot\mid \mathbf{x})$ is the trace measure of the expanded KSC diagram $\widehat{D}_\theta$ conditional on the external input value $\mathbf{x}$.

Random variables under $\nu_D^\theta$ are denoted with overlines. Thus, $\overline{\mathbf{X}}$ is the external input tuple, $\overline{\mathbf{R}}$ is the reference tuple, and $\overline{\mathbf{Y}}_u$ is the output tuple of the vertex $u\in \widehat{V}(D)$. The external output tuple of the whole diagram is denoted by $\overline{\mathbf{Y}}_D$. It is obtained from the vertex output variables by the external output projection map. The scalar objective is $J=f_O(\overline{\mathbf{Y}}_D,\overline{\mathbf{R}})$.

\begin{definition}[Conditional expected future objective]
\label{def:conditional_expected_future_objective}
Let $\overline{\mathbf{Z}}$ be a finite tuple of random variables under the trace law $\nu_D^\theta$, taking values in a standard Borel space $\underline{\mathbf{Z}}$. A conditional expected future objective for $\overline{\mathbf{Z}}$ is a measurable function $Q_{\overline{\mathbf{Z}}}^\theta:\underline{\mathbf{Z}}\to \mathbb{R}$ such that
\[
Q_{\overline{\mathbf{Z}}}^\theta(\overline{\mathbf{Z}})
=
\mathbb{E}_{\nu_D^\theta}[J\mid \sigma(\overline{\mathbf{Z}})].
\]
\end{definition}

Since all spaces in the paper are standard Borel, such measurable versions exist for integrable $J$. They are unique only up to almost-sure equality with respect to the law of $\overline{\mathbf{Z}}$. This is sufficient for gradient identities, which are stated as identities of expectations.

The notation $Q_{\overline{\mathbf{Z}}}^\theta(\mathbf{z})$ denotes evaluation of the chosen version at $\mathbf{z}$. We avoid notation such as $\mathbb{E}[J\mid \mathbf{z}]$. When $\overline{\mathbf{Z}}=(\overline{\mathbf{Y}}_u,\overline{\mathbf{C}})$ consists of the output tuple of a vertex together with a context tuple, the function $Q_{\overline{\mathbf{Z}}}^\theta$ is the objective seen from that vertex.

\subsection{Local parameter-gradient representing functions}
\label{subsec:admissible_local_gradient_operators}

The gradient theory herein is stated in terms of local parameter-gradient representing functions. This separates the diagrammatic part of the argument from the analytic method used to differentiate a kernel. Different kernels may use different local rules, such as score-function rules or deterministic pathwise rules.

Let $\Theta$ be a finite-dimensional real vector space. We write $\Theta^*$ for its dual. After choosing coordinates, elements of $\Theta^*$ may be identified with gradient vectors.

\begin{definition}[Local parameter-gradient representing function]
\label{def:admissible_local_parameter_gradient_operator}
Let $\{k_\theta:\mathbf{A}\to \mathbf{B}\}_{\theta\in \Theta}$ be a parameterized kernel in a CMP $\PC$. Let $\mathbf{S}$ be a finite ordered tuple of context objects, and let $Q:\underline{\mathbf{A}}\times \underline{\mathbf{B}}\times \underline{\mathbf{S}}\to \mathbb{R}$ be a measurable function. For every probability measure $\lambda$ on $\underline{\mathbf{A}}\times \underline{\mathbf{S}}$ which does not depend on $\theta$, define
\[
F_{Q,\lambda}(\theta)
=
\int_{\underline{\mathbf{A}}\times \underline{\mathbf{S}}}
\left(
\int_{\underline{\mathbf{B}}}
Q(\mathbf{a},\mathbf{b},\mathbf{s})\,
k_\theta(d\mathbf{b}\mid \mathbf{a})
\right)
\lambda(d\mathbf{a},d\mathbf{s}).
\]
A local parameter-gradient representing function for $k_\theta$ relative to $\mathbf{S}$ and $Q$ is a measurable function $G_{k,Q}: \Theta\times \underline{\mathbf{A}}\times \underline{\mathbf{B}}\times \underline{\mathbf{S}} \to \Theta^*$ such that for every $\theta\in \Theta$ at which $F_{Q,\lambda}$ is finite and differentiable, the derivative satisfies
\[
D_\theta F_{Q,\lambda}(\theta)
=
\int_{\underline{\mathbf{A}}\times \underline{\mathbf{S}}}
\left(
\int_{\underline{\mathbf{B}}}
G_{k,Q}(\theta,\mathbf{a},\mathbf{b},\mathbf{s})\,
k_\theta(d\mathbf{b}\mid \mathbf{a})
\right)
\lambda(d\mathbf{a},d\mathbf{s}).
\]
\end{definition}

The representing function $G_{k,Q}$ represents the derivative of the local expected objective $F_{Q,\lambda}$ as an expectation over the local trace variables. If such a function is specified for every $Q$ in a class $\mathcal{Q}$, we say that the parameterized kernel has local parameter-gradient representing functions for $\mathcal{Q}$.

The tuple $\mathbf{S}$ represents variables that are not outputs of the local kernel but are needed to evaluate the downstream objective. In a CKSC diagram, these variables may include passthrough wires, reference variables, or other trace variables. The measure $\lambda$ represents the joint law of the local input and context variables induced by the part of the diagram upstream of the local kernel.

Definition~\ref{def:admissible_local_parameter_gradient_operator} is local. It does not require the whole diagram to be differentiable at once. It requires that, once the local input and context distribution is fixed, the derivative of the local expected objective can be represented as the expectation of the trace-wise quantity $G_{k,Q}$.

\begin{proposition}[Score-function local representing function]
\label{prop:score_function_local_operator}
Let $\{k_\theta:\mathbf{A}\to \mathbf{B}\}_{\theta\in \Theta}$ be a parameterized kernel. Suppose that there is a $\sigma$-finite measure $\nu_{\mathbf{B}}$ on $\underline{\mathbf{B}}$ such that $k_\theta(d\mathbf{b}\mid \mathbf{a})=p_\theta(\mathbf{b}\mid \mathbf{a})\,\nu_{\mathbf{B}}(d\mathbf{b})$ for all $\theta\in \Theta$ and $\mathbf{a}\in \underline{\mathbf{A}}$. Let $Q:\underline{\mathbf{A}}\times \underline{\mathbf{B}}\times \underline{\mathbf{S}}\to \mathbb{R}$ be measurable. For a probability measure $\lambda$ on $\underline{\mathbf{A}}\times \underline{\mathbf{S}}$ considered in Definition~\ref{def:admissible_local_parameter_gradient_operator}, write $\lambda_{\mathbf{A}}$ for its marginal on $\underline{\mathbf{A}}$. Assume that, for every such $\lambda$, the density satisfies $p_\theta(\mathbf{b}\mid \mathbf{a})>0$ for $(\lambda_{\mathbf{A}}\otimes \nu_{\mathbf{B}})$-almost every $(\mathbf{a},\mathbf{b})$, and that $\theta\mapsto p_\theta(\mathbf{b}\mid \mathbf{a})$ is differentiable for such $(\mathbf{a},\mathbf{b})$. Assume also that, for every such $\lambda$, differentiation may be passed under the integral:
\[
D_\theta
\int
Q(\mathbf{a},\mathbf{b},\mathbf{s})
p_\theta(\mathbf{b}\mid \mathbf{a})
\,\nu_{\mathbf{B}}(d\mathbf{b})
\,\lambda(d\mathbf{a},d\mathbf{s})
=
\int
Q(\mathbf{a},\mathbf{b},\mathbf{s})
D_\theta p_\theta(\mathbf{b}\mid \mathbf{a})
\,\nu_{\mathbf{B}}(d\mathbf{b})
\,\lambda(d\mathbf{a},d\mathbf{s}).
\]
Then the function
\[
G_{k,Q}^{\mathrm{sf}}(\theta,\mathbf{a},\mathbf{b},\mathbf{s})
=
Q(\mathbf{a},\mathbf{b},\mathbf{s})\,
D_\theta \log p_\theta(\mathbf{b}\mid \mathbf{a}),
\]
defined on the set where $p_\theta(\mathbf{b}\mid \mathbf{a})>0$ and extended by zero elsewhere, is a local parameter-gradient representing function.
\end{proposition}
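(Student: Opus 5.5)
The plan is to verify the defining identity of Definition~\ref{def:admissible_local_parameter_gradient_operator} directly. We fix $\theta$ at which $F_{Q,\lambda}$ is finite and differentiable, and rewrite $D_\theta F_{Q,\lambda}(\theta)$ using the density representation and the interchange hypothesis. First, we write
\[
F_{Q,\lambda}(\theta)=\int Q(\mathbf{a},\mathbf{b},\mathbf{s})\,p_\theta(\mathbf{b}\mid \mathbf{a})\,\nu_{\mathbf{B}}(d\mathbf{b})\,\lambda(d\mathbf{a},d\mathbf{s}),
\]
which follows from $k_\theta(d\mathbf{b}\mid\mathbf{a})=p_\theta(\mathbf{b}\mid\mathbf{a})\,\nu_{\mathbf{B}}(d\mathbf{b})$ and Fubini--Tonelli for the $\sigma$-finite measure $\nu_{\mathbf{B}}$. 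Here we use that $\lambda$ does not depend on $\theta$. The assumed interchange of differentiation and integration then gives
\[
D_\theta F_{Q,\lambda}(\theta)=\int Q\,D_\theta p_\theta(\mathbf{b}\mid\mathbf{a})\,\nu_{\mathbf{B}}(d\mathbf{b})\,\lambda(d\mathbf{a},d\mathbf{s}).
\]

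Second, we apply the log-derivative identity. On the set $P_\theta=\{(\mathbf{a},\mathbf{b}):p_\theta(\mathbf{b}\mid\mathbf{a})>0\}$ we have $D_\theta p_\theta=p_\theta\,D_\theta\log p_\theta$. By hypothesis, the complement of $P_\theta$ is $(\lambda_{\mathbf{A}}\otimes\nu_{\mathbf{B}})$-null. Since the integrand involves $\mathbf{s}$ only through $Q$ and the measure is $\nu_{\mathbf{B}}(d\mathbf{b})\lambda(d\mathbf{a},d\mathbf{s})$, the set $\{(\mathbf{a},\mathbf{b},\mathbf{s}):(\mathbf{a},\mathbf{b})\notin P_\theta\}$ is null for $\nu_{\mathbf{B}}\otimes\lambda$. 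This holds because its $\nu_{\mathbf{B}}\otimes\lambda$-measure equals the $\lambda_{\mathbf{A}}\otimes\nu_{\mathbf{B}}$-measure of $P_\theta^c$. So we may replace $D_\theta p_\theta$ by $p_\theta\,G^{\mathrm{sf}}_{k,Q}/Q$, that is, write $Q\,D_\theta p_\theta = G^{\mathrm{sf}}_{k,Q}\,p_\theta$ almost everywhere. The zero extension off $P_\theta$ is harmless there. Reabsorbing $p_\theta\,\nu_{\mathbf{B}}(d\mathbf{b})$ into $k_\theta(d\mathbf{b}\mid\mathbf{a})$ yields exactly the representing formula.

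Third, we check measurability of $G^{\mathrm{sf}}_{k,Q}$ on $\Theta\times\underline{\mathbf{A}}\times\underline{\mathbf{B}}\times\underline{\mathbf{S}}$. The derivative $D_\theta\log p_\theta$ is a pointwise limit of difference quotients along coordinate directions with rational step sizes. Those difference quotients are measurable provided $(\theta,\mathbf{a},\mathbf{b})\mapsto p_\theta(\mathbf{b}\mid\mathbf{a})$ is jointly measurable. This joint measurability can be arranged by choosing a jointly measurable version of the density, for example via a Radon--Nikodym derivative on standard Borel spaces with the joint measurability of Definition~\ref{def:parameterized_kernel}. The set where $p_\theta>0$ and the limit exists is then measurable, and setting $G$ to zero elsewhere keeps it measurable.

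This measurability step is the main obstacle, since the statement assumes only differentiability for almost every $(\mathbf{a},\mathbf{b})$ and does not explicitly assume joint measurability of the density. I would first handle it by taking the measurable version as an implicit convention, or else by defining $G$ as a $\limsup$ of rational difference quotients, which is always measurable and agrees with the derivative wherever the derivative exists. The remaining analytic steps are immediate from the stated hypotheses.
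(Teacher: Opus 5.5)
Your proposal is correct and follows the paper's proof essentially step for step: rewrite $F_{Q,\lambda}$ via the density, apply the interchange hypothesis, substitute $D_\theta p_\theta = p_\theta\, D_\theta \log p_\theta$ off a $(\lambda_{\mathbf{A}}\otimes\nu_{\mathbf{B}})$-null set, observe that the zero extension carries no $k_\theta$-mass, and reabsorb $p_\theta\,\nu_{\mathbf{B}}(d\mathbf{b})$ into $k_\theta(d\mathbf{b}\mid\mathbf{a})$; your Tonelli computation of the $\nu_{\mathbf{B}}\otimes\lambda$-measure of the exceptional set is a more explicit form of the paper's remark about the $(\mathbf{a},\mathbf{b})$-projection. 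Your paragraph on measurability of $G^{\mathrm{sf}}_{k,Q}$ goes beyond the paper, which does not address it. Taking joint measurability of $(\theta,\mathbf{a},\mathbf{b})\mapsto p_\theta(\mathbf{b}\mid\mathbf{a})$ as an implicit convention, together with a $\limsup$ over rational difference quotients, is the sound way to close that point. Re-choosing a Radon--Nikodym version is not: it alters $p_\theta$ on $\theta$-dependent null sets, which can destroy the pointwise differentiability in $\theta$ that the hypothesis provides.
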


\begin{proposition}[Deterministic pathwise local representing function]
\label{prop:deterministic_pathwise_local_operator}
Let $\{k_\theta:\mathbf{A}\to \mathbf{B}\}_{\theta\in \Theta}$ be a parameterized kernel. Suppose that $\underline{\mathbf{B}}$ is a finite-dimensional real vector space with its Borel $\sigma$-algebra, and that there is a measurable map $g:\Theta\times \underline{\mathbf{A}}\to \underline{\mathbf{B}}$ such that $k_\theta(d\mathbf{b}\mid \mathbf{a})=\delta_{g(\theta,\mathbf{a})}(d\mathbf{b})$. Let $Q:\underline{\mathbf{A}}\times \underline{\mathbf{B}}\times \underline{\mathbf{S}}\to \mathbb{R}$ be measurable. Assume that, for every $(\mathbf{a},\mathbf{s})$, the map $\mathbf{b}\mapsto Q(\mathbf{a},\mathbf{b},\mathbf{s})$ is differentiable. Assume also that, for every $\mathbf{a}$, the map $\theta\mapsto g(\theta,\mathbf{a})$ is differentiable. For every probability measure $\lambda$ on $\underline{\mathbf{A}}\times \underline{\mathbf{S}}$ considered in Definition~\ref{def:admissible_local_parameter_gradient_operator}, assume that the function
\[
F_Q(\theta)
=
\int_{\underline{\mathbf{A}}\times \underline{\mathbf{S}}}
Q(\mathbf{a},g(\theta,\mathbf{a}),\mathbf{s})\,
\lambda(d\mathbf{a},d\mathbf{s})
\]
is differentiable and that differentiation may be passed under the integral. Then
\[
G_{k,Q}^{\mathrm{det}}(\theta,\mathbf{a},\mathbf{b},\mathbf{s})
=
D_{\mathbf{b}}Q(\mathbf{a},\mathbf{b},\mathbf{s})
\circ
D_\theta g(\theta,\mathbf{a})
\]
is a local parameter-gradient representing function, where $D_{\mathbf{b}}Q(\mathbf{a},\mathbf{b},\mathbf{s})$ is a covector on $\underline{\mathbf{B}}$, and $D_\theta g(\theta,\mathbf{a})$ is a linear map from $\Theta$ to $\underline{\mathbf{B}}$.
\end{proposition}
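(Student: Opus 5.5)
The plan is to verify the defining identity of Definition~\ref{def:admissible_local_parameter_gradient_operator} directly: the deterministic kernel collapses the inner integral to an evaluation, and then the chain rule is applied under the integral. Fix a probability measure $\lambda$ on $\underline{\mathbf{A}}\times\underline{\mathbf{S}}$ that does not depend on $\theta$, and fix a $\theta$ at which $F_{Q,\lambda}$ is finite and differentiable.

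First, I will rewrite $F_{Q,\lambda}$. Since $k_\theta(d\mathbf{b}\mid\mathbf{a})=\delta_{g(\theta,\mathbf{a})}(d\mathbf{b})$, the inner integral equals $Q(\mathbf{a},g(\theta,\mathbf{a}),\mathbf{s})$. Hence $F_{Q,\lambda}(\theta)=F_Q(\theta)$. Measurability of $(\mathbf{a},\mathbf{s})\mapsto Q(\mathbf{a},g(\theta,\mathbf{a}),\mathbf{s})$ follows because it is a composite of measurable maps.

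Second, I will differentiate. By hypothesis, differentiation may be passed under the integral, so
\[
D_\theta F_{Q,\lambda}(\theta)=\int D_\theta\bigl[Q(\mathbf{a},g(\theta,\mathbf{a}),\mathbf{s})\bigr]\,\lambda(d\mathbf{a},d\mathbf{s}).
\]
For fixed $(\mathbf{a},\mathbf{s})$, the map $\mathbf{b}\mapsto Q(\mathbf{a},\mathbf{b},\mathbf{s})$ is differentiable and $\theta\mapsto g(\theta,\mathbf{a})$ is differentiable. The chain rule then gives
\[
D_\theta\bigl[Q(\mathbf{a},g(\theta,\mathbf{a}),\mathbf{s})\bigr]=D_{\mathbf{b}}Q(\mathbf{a},g(\theta,\mathbf{a}),\mathbf{s})\circ D_\theta g(\theta,\mathbf{a}),
\]
which is a covector in $\Theta^*$. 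This step needs some care: differentiability of $Q$ in $\mathbf{b}$ should be read as Fréchet differentiability, so that the chain rule applies. I will state that reading explicitly.

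Third, I will re-express the result as a kernel integral. The integrand equals $G^{\mathrm{det}}_{k,Q}(\theta,\mathbf{a},g(\theta,\mathbf{a}),\mathbf{s})$, which is $\int G^{\mathrm{det}}_{k,Q}(\theta,\mathbf{a},\mathbf{b},\mathbf{s})\,\delta_{g(\theta,\mathbf{a})}(d\mathbf{b})$. This is exactly the right-hand side required by Definition~\ref{def:admissible_local_parameter_gradient_operator}.

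The main obstacle is the measurability of $G^{\mathrm{det}}_{k,Q}$ as a function on $\Theta\times\underline{\mathbf{A}}\times\underline{\mathbf{B}}\times\underline{\mathbf{S}}$. I would handle it by writing each partial derivative as a pointwise limit of difference quotients along coordinate directions with rational step sizes $1/n$. Each difference quotient is jointly measurable, because $Q$ and $g$ are measurable. Since the derivatives exist everywhere by hypothesis, the limits exist everywhere and are therefore measurable. Composing these with the bilinear evaluation pairing preserves measurability.
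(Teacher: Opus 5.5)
Your proposal is correct and follows the paper's proof essentially step for step. You collapse the Dirac kernel to get $F_{Q,\lambda}=F_Q$, pass $D_\theta$ under the integral, apply the chain rule pointwise, and rewrite the result as an integral against $\delta_{g(\theta,\mathbf{a})}$. Your extra argument that $G^{\mathrm{det}}_{k,Q}$ is jointly measurable is valid: difference quotients with steps $1/n$ are measurable and converge everywhere because the derivatives exist everywhere. It covers a point the paper's proof leaves implicit, since Definition~\ref{def:admissible_local_parameter_gradient_operator} requires the representing function to be measurable.
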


\subsection{The local reverse rule for CKSC}
\label{subsec:local_reverse_rule_cksc}

We now state the local reverse rule. The rule isolates the contribution of one parameterized vertex while all other vertex parameters are held fixed.

\begin{definition}[Local context]
\label{def:admissible_local_context}
Let $D$ be a parameterized CKSC diagram, and let $u\in V(D)$ be a parameterized vertex. Write $k_{u,\theta_u}:\mathbf{A}_u\to \mathbf{B}_u$ for its parameterized kernel. Under the full trace law $\nu_D^\theta$, let $\overline{\mathbf{A}}_u$ be the random input tuple of the vertex $u$, and let $\overline{\mathbf{B}}_u$ be its random output tuple. Let $u\in V(D)$ be a parameterized vertex. A finite tuple of trace random variables $\overline{\mathbf{S}}_u$ is a local context for $u$ if the following condition holds. For every fixed value of the parameters $\theta_{-u}$ at all vertices except $u$, there is a probability measure $\lambda_u^{\theta_{-u}}$ on $\underline{\mathbf{A}_u}\times \underline{\mathbf{S}_u}$ such that $\lambda_u^{\theta_{-u}}$ does not depend on $\theta_u$, and for every bounded measurable function $\varphi:\underline{\mathbf{A}_u}\times \underline{\mathbf{B}_u}\times \underline{\mathbf{S}_u}\to \mathbb{R}$,
\[
\mathbb{E}_{\nu_D^\theta}
[
\varphi(\overline{\mathbf{A}}_u,\overline{\mathbf{B}}_u,\overline{\mathbf{S}}_u)
]
=
\int_{\underline{\mathbf{A}_u}\times \underline{\mathbf{S}_u}}
\left(
\int_{\underline{\mathbf{B}_u}}
\varphi(\mathbf{a},\mathbf{b},\mathbf{s})\,
k_{u,\theta_u}(d\mathbf{b}\mid \mathbf{a})
\right)
\lambda_u^{\theta_{-u}}(d\mathbf{a},d\mathbf{s}).
\]
\end{definition}

\begin{lemma}[Extension of the local context identity]
\label{lem:local_context_extension}
Let $\overline{\mathbf{S}}_u$ be a local context for $u$, with associated probability measure $\lambda_u^{\theta_{-u}}$. Then the identity in Definition~\ref{def:admissible_local_context} also holds for every measurable function $\varphi:\underline{\mathbf{A}_u}\times \underline{\mathbf{B}_u}\times \underline{\mathbf{S}_u}\to \mathbb{R}$ that is integrable with respect to the joint law of $(\overline{\mathbf{A}}_u,\overline{\mathbf{B}}_u,\overline{\mathbf{S}}_u)$ under $\nu_D^\theta$, and both sides are then finite. Moreover, the identity holds component-wise for every measurable function $\Phi:\underline{\mathbf{A}_u}\times \underline{\mathbf{B}_u}\times \underline{\mathbf{S}_u}\to \Theta_u^*$ whose components, relative to any chosen basis of $\Theta_u^*$, are integrable in this way. The componentwise identity does not depend on the chosen basis.
\end{lemma}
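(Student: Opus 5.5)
The plan is to recognize the identity of Definition~\ref{def:admissible_local_context} as an equality of two probability measures on $\underline{\mathbf{A}_u}\times \underline{\mathbf{B}_u}\times \underline{\mathbf{S}_u}$, and then apply the standard extension from bounded to integrable functions. Let $P$ be the joint law of $(\overline{\mathbf{A}}_u,\overline{\mathbf{B}}_u,\overline{\mathbf{S}}_u)$ under $\nu_D^\theta$. Let $\Lambda$ be the measure
\[
\Lambda(E)=\int_{\underline{\mathbf{A}_u}\times \underline{\mathbf{S}_u}}\int_{\underline{\mathbf{B}_u}} \mathbf{1}_E(\mathbf{a},\mathbf{b},\mathbf{s})\,k_{u,\theta_u}(d\mathbf{b}\mid \mathbf{a})\,\lambda_u^{\theta_{-u}}(d\mathbf{a},d\mathbf{s}).
\]
This is a probability measure: the inner integral is measurable in $(\mathbf{a},\mathbf{s})$ by the standard kernel--measure construction, and the total mass is $1$ because $k_{u,\theta_u}(\cdot\mid \mathbf{a})$ and $\lambda_u^{\theta_{-u}}$ are probability measures. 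Taking $\varphi=\mathbf{1}_E$, which is bounded, in the defining identity gives $P=\Lambda$.

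Next I would note that, for any nonnegative measurable $\varphi$, the integral $\int\varphi\,d\Lambda$ equals the iterated integral on the right-hand side of the identity. This holds by the usual monotone-class or simple-function argument: it is true for indicators by definition of $\Lambda$, extends to simple functions by linearity, and extends to nonnegative functions by monotone convergence applied to both the inner and outer integrals. Combined with the change of variables $\mathbb{E}_{\nu_D^\theta}[\varphi(\overline{\mathbf{A}}_u,\overline{\mathbf{B}}_u,\overline{\mathbf{S}}_u)]=\int\varphi\,dP$, this yields the identity for all nonnegative measurable $\varphi$, with both sides possibly infinite.

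For integrable $\varphi$, I would split $\varphi=\varphi^+-\varphi^-$. Each part has finite integral under $P=\Lambda$, so both sides are finite for each part and the identity follows by subtraction. One point needs care. The inner integral $\int\varphi(\mathbf{a},\mathbf{b},\mathbf{s})\,k(d\mathbf{b}\mid\mathbf{a})$ could be of the form $\infty-\infty$ on a set of $(\mathbf{a},\mathbf{s})$. That set is $\lambda_u^{\theta_{-u}}$-null, because the iterated integral of $|\varphi|$ is finite; on it the inner integral is defined arbitrarily, for instance as zero, without affecting the outer integral. This bookkeeping is the only real obstacle, and it is routine Fubini-type reasoning for kernels.

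For the $\Theta_u^*$-valued case, I would fix a basis and apply the scalar result to each component $\Phi_i$. Basis independence follows from linearity. If $\Phi'_j=\sum_i c_{ji}\Phi_i$ is the coordinate expression in another basis, integrability of the components is preserved, and both sides of the identity transform by the same constant matrix. Hence the vector identity $\mathbb{E}[\Phi]=\iint\Phi\,dk\,d\lambda$ in $\Theta_u^*$ is intrinsic.
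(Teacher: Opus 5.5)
Your proof is correct and follows essentially the same route as the paper: you pass from bounded to nonnegative functions by monotone convergence, split an integrable $\varphi$ as $\varphi^+-\varphi^-$, and treat the $\Theta_u^*$-valued case componentwise in a basis. The only differences are that the paper truncates directly with $\varphi_n=\min(\varphi,n)$ instead of going through $P=\Lambda$ and simple functions, and that your observation that the inner integral can be of the form $\infty-\infty$ only on a $\lambda_u^{\theta_{-u}}$-null set makes explicit a point the paper leaves implicit.
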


The condition in Definition~\ref{def:admissible_local_context} says that, after the local input and context have been fixed, the dependence on $\theta_u$ enters through the local kernel $k_{u,\theta_u}$. Proposition~\ref{prop:local_reverse_rule} also assumes a version $Q_u^{\theta_{-u}}$ of the conditional expected future objective that is fixed with respect to $\theta_u$ when the other parameters $\theta_{-u}$ are held fixed. This assumption is used in the proof to ensure that differentiation with respect to $\theta_u$ acts on the local kernel rather than on the objective function.

\begin{proposition}[Local reverse rule]
\label{prop:local_reverse_rule}
Let $D$ be a parameterized CKSC diagram with expected objective $\mathcal{L}_D(\theta)=\mathbb{E}_{\nu_D^\theta}[J]$. Fix a parameterized vertex $u\in V(D)$, and write $\theta=(\theta_u,\theta_{-u})$, where $\theta_{-u}$ denotes the parameters at all vertices except $u$. Let $\overline{\mathbf{S}}_u$ be a local context for $u$. Suppose that there is a neighborhood $U\subseteq \Theta_u$ of $\theta_u$ and a measurable function $Q_u^{\theta_{-u}} : \underline{\mathbf{A}_u} \times \underline{\mathbf{B}_u} \times \underline{\mathbf{S}_u} \to \mathbb{R}$ such that, for every $\eta\in U$, the random variable $J$ is integrable under $\nu_D^{(\eta,\theta_{-u})}$ and
\[
Q_u^{\theta_{-u}}
(\overline{\mathbf{A}}_u,\overline{\mathbf{B}}_u,\overline{\mathbf{S}}_u)
=
\mathbb{E}_{\nu_D^{(\eta,\theta_{-u})}}
[
J
\mid
\sigma(\overline{\mathbf{A}}_u,\overline{\mathbf{B}}_u,\overline{\mathbf{S}}_u)
].
\]
Suppose also that there is a local parameter-gradient representing function $G_{u,Q_u} : \Theta_u \times \underline{\mathbf{A}_u} \times \underline{\mathbf{B}_u} \times \underline{\mathbf{S}_u} \to \Theta_u^*$ for $k_{u,\theta_u}$ relative to $\mathbf{S}_u$ and $Q_u^{\theta_{-u}}$. $G_{u,Q_u}$ denotes the representing function associated with $Q_u^{\theta_{-u}}$. Define
\[
F_u(\eta)
=
\int_{\underline{\mathbf{A}_u}\times \underline{\mathbf{S}_u}}
\left(
\int_{\underline{\mathbf{B}_u}}
Q_u^{\theta_{-u}}(\mathbf{a},\mathbf{b},\mathbf{s})\,
k_{u,\eta}(d\mathbf{b}\mid \mathbf{a})
\right)
\lambda_u^{\theta_{-u}}(d\mathbf{a},d\mathbf{s}).
\]
Assume that $F_u$ is finite in a neighborhood of $\theta_u$ and differentiable at $\theta_u$, and that $G_{u,Q_u} (\theta_u,\overline{\mathbf{A}}_u,\overline{\mathbf{B}}_u,\overline{\mathbf{S}}_u)$ is integrable under $\nu_D^\theta$. Then
\[
D_{\theta_u}\mathcal{L}_D(\theta)
=
\mathbb{E}_{\nu_D^\theta}
[
G_{u,Q_u}
(\theta_u,\overline{\mathbf{A}}_u,\overline{\mathbf{B}}_u,\overline{\mathbf{S}}_u)
].
\]
\end{proposition}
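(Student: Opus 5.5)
The plan is to reduce the derivative of $\mathcal{L}_D$ in the $\theta_u$ direction to the derivative of the local function $F_u$, and then apply the defining property of the local representing function $G_{u,Q_u}$. Throughout, $\theta_{-u}$ is fixed and we write $\eta\in U$ for the varying value of the parameter at $u$.

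\emph{Step 1 (tower property).} For each $\eta\in U$, $J$ is integrable under $\nu_D^{(\eta,\theta_{-u})}$, and $Q_u^{\theta_{-u}}(\overline{\mathbf{A}}_u,\overline{\mathbf{B}}_u,\overline{\mathbf{S}}_u)$ is a version of the conditional expectation of $J$ given $\sigma(\overline{\mathbf{A}}_u,\overline{\mathbf{B}}_u,\overline{\mathbf{S}}_u)$. By the tower property,
\[
\mathcal{L}_D(\eta,\theta_{-u})=\mathbb{E}_{\nu_D^{(\eta,\theta_{-u})}}\bigl[Q_u^{\theta_{-u}}(\overline{\mathbf{A}}_u,\overline{\mathbf{B}}_u,\overline{\mathbf{S}}_u)\bigr].
\]
Conditional expectations of integrable variables are integrable, so $Q_u^{\theta_{-u}}$ is integrable with respect to the joint law of the local variables.

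\emph{Step 2 (localize).} The local context identity of Definition~\ref{def:admissible_local_context} holds at parameter value $(\eta,\theta_{-u})$ with the measure $\lambda_u^{\theta_{-u}}$, which does not depend on $\eta$. Lemma~\ref{lem:local_context_extension} extends this identity to integrable $\varphi$, and we apply it with $\varphi=Q_u^{\theta_{-u}}$. This gives $\mathcal{L}_D(\eta,\theta_{-u})=F_u(\eta)$ for every $\eta\in U$. Since the two functions agree on a neighborhood of $\theta_u$, and $F_u$ is assumed finite near $\theta_u$ and differentiable at $\theta_u$, the partial derivative $D_{\theta_u}\mathcal{L}_D(\theta)$ exists and equals $D_\eta F_u(\theta_u)$.

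\emph{Step 3 (representing function and back to the trace law).} Take $\lambda=\lambda_u^{\theta_{-u}}$ and $Q=Q_u^{\theta_{-u}}$ in Definition~\ref{def:admissible_local_parameter_gradient_operator}; then $F_{Q,\lambda}=F_u$. The definition gives
\[
D F_u(\theta_u)=\int\!\!\int G_{u,Q_u}(\theta_u,\mathbf{a},\mathbf{b},\mathbf{s})\,k_{u,\theta_u}(d\mathbf{b}\mid\mathbf{a})\,\lambda_u^{\theta_{-u}}(d\mathbf{a},d\mathbf{s}).
\]
Next apply the $\Theta_u^*$-valued part of Lemma~\ref{lem:local_context_extension}, now at the parameter value $\theta$ itself, with $\Phi=G_{u,Q_u}(\theta_u,\cdot)$. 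Its componentwise integrability under $\nu_D^\theta$ is assumed. This rewrites the right-hand side as $\mathbb{E}_{\nu_D^\theta}[G_{u,Q_u}(\theta_u,\overline{\mathbf{A}}_u,\overline{\mathbf{B}}_u,\overline{\mathbf{S}}_u)]$, which is the claimed formula.

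\emph{Main obstacle.} The delicate point is Step 2: we need one function $Q_u^{\theta_{-u}}$ that serves as a conditional expectation simultaneously for every $\eta\in U$. The hypothesis provides this, so the $\eta$-dependence sits only in the local kernel and no derivative of $Q$ arises. I would also check that the local context identity is applied at each $(\eta,\theta_{-u})$ rather than only at $\theta$. This is exactly what the local context definition provides, since it is stated for every value of $\theta_u$ with $\lambda_u^{\theta_{-u}}$ held fixed.
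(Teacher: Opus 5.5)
Your proposal is correct and follows the paper's proof step for step. Both arguments use the tower property to replace $J$ by $Q_u^{\theta_{-u}}$, then the local-context identity to obtain $\mathcal{L}_D(\eta,\theta_{-u})=F_u(\eta)$ on $U$, then the representing property of $G_{u,Q_u}$ with $\lambda=\lambda_u^{\theta_{-u}}$, and finally the local-context identity again at $\theta$ to return to $\mathbb{E}_{\nu_D^\theta}$. The only difference is cosmetic: you cite Lemma~\ref{lem:local_context_extension} explicitly in both places, including for the possibly unbounded $Q_u^{\theta_{-u}}$ in Step 2, whereas the paper applies the local-context identity directly there and mentions truncation only at the last step.
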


\begin{proof}
For $\eta\in U$, write $\theta^\eta=(\eta,\theta_{-u})$. By the defining property of $Q_u^{\theta_{-u}}$ and the tower property of conditional expectation,
\[
\mathcal{L}_D(\theta^\eta)
=
\mathbb{E}_{\nu_D^{\theta^\eta}}[J]
=
\mathbb{E}_{\nu_D^{\theta^\eta}}
[
Q_u^{\theta_{-u}}
(\overline{\mathbf{A}}_u,\overline{\mathbf{B}}_u,\overline{\mathbf{S}}_u)
].
\]
The random variable $Q_u^{\theta_{-u}}(\overline{\mathbf{A}}_u,\overline{\mathbf{B}}_u,\overline{\mathbf{S}}_u)$ is integrable because it is a conditional expectation of the integrable random variable $J$.

Since $\overline{\mathbf{S}}_u$ is a local context, the preceding expectation can be written as
\[
\mathcal{L}_D(\theta^\eta)
=
\int_{\underline{\mathbf{A}_u}\times \underline{\mathbf{S}_u}}
\left(
\int_{\underline{\mathbf{B}_u}}
Q_u^{\theta_{-u}}(\mathbf{a},\mathbf{b},\mathbf{s})\,
k_{u,\eta}(d\mathbf{b}\mid \mathbf{a})
\right)
\lambda_u^{\theta_{-u}}(d\mathbf{a},d\mathbf{s}).
\]
Thus, $\mathcal{L}_D(\theta^\eta)=F_u(\eta)$ for $\eta\in U$. Therefore,
\[
D_{\theta_u}\mathcal{L}_D(\theta)=D_\eta F_u(\eta)\big|_{\eta=\theta_u}.
\]

By the defining property of the local parameter-gradient representing function,
\[
D_\eta F_u(\eta)\big|_{\eta=\theta_u}
=
\int_{\underline{\mathbf{A}_u}\times \underline{\mathbf{S}_u}}
\left(
\int_{\underline{\mathbf{B}_u}}
G_{u,Q_u}(\theta_u,\mathbf{a},\mathbf{b},\mathbf{s})\,
k_{u,\theta_u}(d\mathbf{b}\mid \mathbf{a})
\right)
\lambda_u^{\theta_{-u}}(d\mathbf{a},d\mathbf{s}).
\]
The local-context identity identifies the law of
$(\overline{\mathbf{A}}_u,\overline{\mathbf{B}}_u,\overline{\mathbf{S}}_u)$ with the measure represented by the last iterated integral. Since the integrand is integrable by assumption, this identity extends from bounded measurable functions to the present integrable function by truncation. Hence
\[
D_{\theta_u}\mathcal{L}_D(\theta)
=
\mathbb{E}_{\nu_D^\theta}
[
G_{u,Q_u}
(\theta_u,\overline{\mathbf{A}}_u,\overline{\mathbf{B}}_u,\overline{\mathbf{S}}_u)
].
\]
This proves the result.
\end{proof}

\subsection{Reverse-mode differentiation for finite acyclic CKSC diagrams}
\label{subsec:reverse_mode_finite_cksc}

The local reverse rule of Proposition~\ref{prop:local_reverse_rule} gives the derivative with respect to one vertex parameter. Since the total parameter space of a finite diagram is a finite product, these local derivatives assemble into the derivative with respect to all parameters.

Let $D$ be a parameterized CKSC diagram. Write $V_{\mathrm{par}}(D)\subseteq V(D)$ for the set of parameterized vertices of $D$. Since the vertices outside $V_{\mathrm{par}}(D)$ have zero parameter spaces, dropping the zero-dimensional factors gives linear isomorphisms
$\Theta_D \cong \prod_{u\in V_{\mathrm{par}}(D)}\Theta_u,~\Theta_D^*\cong\prod_{u\in V_{\mathrm{par}}(D)}\Theta_u^*$. We use these isomorphisms to identify the corresponding spaces in Theorem~\ref{thm:reverse_mode_cksc}.

\begin{theorem}[Reverse-mode differentiation for finite acyclic CKSC diagrams]
\label{thm:reverse_mode_cksc}
Let $D$ be a parameterized CKSC diagram with expected objective $\mathcal{L}_D(\theta)=\mathbb{E}_{\nu_D^\theta}[J]$. Fix $\theta\in \Theta_D$. Assume that $\mathcal{L}_D$ is differentiable at $\theta$. For each $u\in V_{\mathrm{par}}(D)$, assume that the hypotheses of Proposition~\ref{prop:local_reverse_rule} hold at $\theta$ for a local context $\overline{\mathbf{S}}_u$, a conditional expected future objective $Q_u$, and a local parameter-gradient representing function $G_{u,Q_u}$. Define the random covector
\[
\mathcal{G}_u^\theta
=
G_{u,Q_u}
(\theta_u,\overline{\mathbf{A}}_u,\overline{\mathbf{B}}_u,\overline{\mathbf{S}}_u)
\in \Theta_u^*.
\]
Then the derivative of $\mathcal{L}_D$ at $\theta$ is the element of $\Theta_D^*$ whose $u$-component is
$\mathbb{E}_{\nu_D^\theta}[\mathcal{G}_u^\theta]$. Equivalently,
\[
D\mathcal{L}_D(\theta)
=
\left(
\mathbb{E}_{\nu_D^\theta}[\mathcal{G}_u^\theta]
\right)_{u\in V_{\mathrm{par}}(D)}
\in
\prod_{u\in V_{\mathrm{par}}(D)}\Theta_u^*.
\]
\end{theorem}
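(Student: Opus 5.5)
The plan is to reduce the statement to Proposition~\ref{prop:local_reverse_rule} applied vertex by vertex, and then assemble the resulting partial derivatives using the finite product structure of $\Theta_D$. Since $\mathcal{L}_D$ is assumed differentiable at $\theta$ as a function on the whole space $\Theta_D$, its derivative $D\mathcal{L}_D(\theta)\in\Theta_D^*$ is determined by its restrictions to the coordinate subspaces. Under the identification $\Theta_D^*\cong\prod_{u\in V_{\mathrm{par}}(D)}\Theta_u^*$ fixed before the theorem, the $u$-component of $D\mathcal{L}_D(\theta)$ is $D\mathcal{L}_D(\theta)\circ\iota_u$. Here $\iota_u:\Theta_u\to\Theta_D$ is the linear inclusion that puts a vector in slot $u$ and zeros elsewhere. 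By the chain rule applied to the affine map $\eta\mapsto(\eta,\theta_{-u})$, which is $\theta+\iota_u(\eta-\theta_u)$, this component equals the partial derivative $D_{\theta_u}\mathcal{L}_D(\theta)$. Only this direction is needed. The converse, that existence of partials implies differentiability, is not needed, which is why the theorem assumes differentiability of $\mathcal{L}_D$ outright.

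The second step is to fix $u\in V_{\mathrm{par}}(D)$ and invoke Proposition~\ref{prop:local_reverse_rule}. Its hypotheses are assumed to hold at $\theta$: the local context $\overline{\mathbf{S}}_u$, the $\theta_u$-independent version $Q_u^{\theta_{-u}}$ valid on a neighborhood $U$, the representing function $G_{u,Q_u}$, finiteness and differentiability of $F_u$, and integrability of $\mathcal{G}_u^\theta$. The proposition therefore gives $D_{\theta_u}\mathcal{L}_D(\theta)=\mathbb{E}_{\nu_D^\theta}[\mathcal{G}_u^\theta]$. One point needs checking here: the partial derivative produced by the proposition must coincide with the restriction of the full derivative from the first step. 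This holds because both are derivatives at $\eta=\theta_u$ of the same function $\eta\mapsto\mathcal{L}_D(\eta,\theta_{-u})$, and differentiability of the full map implies differentiability of this restriction with the composed derivative. The expectation of $\mathcal{G}_u^\theta$ is an element of $\Theta_u^*$, taken componentwise in any basis. By Lemma~\ref{lem:local_context_extension} this componentwise expectation is basis-independent.

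The final step assembles the components. For vertices outside $V_{\mathrm{par}}(D)$, the factor $\Theta_u$ is zero, so these vertices contribute nothing. For $h=(h_u)_u\in\Theta_D$, linearity gives
\[
D\mathcal{L}_D(\theta)(h)=\sum_{u\in V_{\mathrm{par}}(D)} D\mathcal{L}_D(\theta)(\iota_u h_u)=\sum_{u\in V_{\mathrm{par}}(D)} \mathbb{E}_{\nu_D^\theta}[\mathcal{G}_u^\theta](h_u),
\]
which is exactly the claimed tuple under the identification.

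The main subtlety, though not a deep one, is the first step. Separate partial derivatives do not in general yield a total derivative, so the argument must use the assumed global differentiability of $\mathcal{L}_D$ rather than trying to derive it from the local rules. The other point to handle carefully is that each application of Proposition~\ref{prop:local_reverse_rule} holds the other parameters fixed at $\theta_{-u}$. This is consistent with the partial derivative, since that derivative also varies only $\theta_u$.
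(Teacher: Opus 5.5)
Your proposal is correct and follows essentially the same route as the paper's proof: restrict the assumed total derivative of $\mathcal{L}_D$ to the coordinate directions of the finite product $\Theta_D$, identify each $u$-component with $\mathbb{E}_{\nu_D^\theta}[\mathcal{G}_u^\theta]$ via Proposition~\ref{prop:local_reverse_rule}, and assemble the components by linearity. Your extra care, namely the chain rule for the slot inclusion $\iota_u$ and the observation that the partial derivative from the proposition coincides with the restricted total derivative by uniqueness of derivatives, only spells out what the paper states briefly.
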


\begin{proof}
Since $\Theta_D$ is the finite product $\prod_{u\in V_{\mathrm{par}}(D)}\Theta_u$, a tangent vector $\eta\in \Theta_D$ is a tuple $\eta=(\eta_u)_{u\in V_{\mathrm{par}}(D)}$ with $\eta_u\in \Theta_u$. Since $\mathcal{L}_D$ is differentiable at $\theta$, its derivative is determined by its restrictions to the coordinate directions.

Fix $u\in V_{\mathrm{par}}(D)$, and vary only the parameter $\theta_u$, keeping all other vertex parameters fixed. By Proposition~\ref{prop:local_reverse_rule},
\[
D_{\theta_u}\mathcal{L}_D(\theta)
=
\mathbb{E}_{\nu_D^\theta}
[
G_{u,Q_u}
(\theta_u,\overline{\mathbf{A}}_u,\overline{\mathbf{B}}_u,\overline{\mathbf{S}}_u)
]
=
\mathbb{E}_{\nu_D^\theta}[\mathcal{G}_u^\theta].
\]
Thus, the $u$-component of the derivative is $\mathbb{E}_{\nu_D^\theta}[\mathcal{G}_u^\theta]$.

Let $\eta=(\eta_u)_{u\in V_{\mathrm{par}}(D)}\in \Theta_D$. By linearity of the derivative on the finite product,
\[
D\mathcal{L}_D(\theta)[\eta]
=
\sum_{u\in V_{\mathrm{par}}(D)}
D_{\theta_u}\mathcal{L}_D(\theta)[\eta_u].
\]
Substituting the component formula gives
\[
D\mathcal{L}_D(\theta)[\eta]
=
\sum_{u\in V_{\mathrm{par}}(D)}
\mathbb{E}_{\nu_D^\theta}[\mathcal{G}_u^\theta][\eta_u].
\]
This is the action of the covector
$(\mathbb{E}_{\nu_D^\theta}[\mathcal{G}_u^\theta])_{u\in V_{\mathrm{par}}(D)}
\in \prod_{u\in V_{\mathrm{par}}(D)}\Theta_u^* \cong \Theta_D^*$
on $\eta$. Hence, the claimed formula for $D\mathcal{L}_D(\theta)$ follows.
\end{proof}

Theorem~\ref{thm:reverse_mode_cksc} is a reverse-mode statement. The scalar objective $J$ is evaluated at the external outputs of the diagram. For each vertex $u$, the conditional expected future objective $Q_u$ summarizes the contribution of the part of the trace following $u$, conditioned on the local variables needed by the chosen context. The local representing function then converts this quantity into a covector on the local parameter space $\Theta_u$.

Theorem~\ref{thm:reverse_mode_cksc} states that, given a local context, a $\theta_u$-free conditional expected future objective, and a local parameter-gradient representing function at every parameterized vertex, the gradient of the expected objective decomposes into local expectations. The theorem does not construct these quantities or guarantee their existence in arbitrary parameterized CKSC diagrams. Constructing them in concrete classes of diagrams is the role of Propositions~\ref{prop:score_function_local_operator} and~\ref{prop:deterministic_pathwise_local_operator}, together with a diagram-specific choice of context.

\subsection{Gradient transport along CCMP parameter pushforwards}
\label{subsec:gradient_transport_ccmp}

Theorem~\ref{thm:reverse_mode_cksc} applies within a fixed state CMP. A CCMP also has parameter pushforwards between states. Gradients across these pushforwards are transported by the chain rule for differentiable maps.

Let $(F_{\PC},F_\theta)$ be a CCMP over $\T$. Let $\alpha:t\to t'$ be a morphism in $\T$. The corresponding parameter pushforward is $\alpha_{!,\theta}:\Theta_t\to \Theta_{t'}$. Suppose that an expected objective at state $t'$ is given by a differentiable function $\mathcal{L}_{t'}:\Theta_{t'}\to \mathbb{R}$. For example, $\mathcal{L}_{t'}$ may be the expected objective of a parameterized CKSC diagram in $\PC_{t'}$.

\begin{proposition}[Gradient transport along parameter pushforwards]
\label{prop:gradient_transport_parameter_pushforwards}
Let $\alpha:t\to t'$ be a morphism in the indexing category of a CCMP. Assume that $\alpha_{!,\theta}:\Theta_t\to \Theta_{t'}$ is differentiable, and that $\mathcal{L}_{t'}:\Theta_{t'}\to \mathbb{R}$ is differentiable at $\theta_{t'}=\alpha_{!,\theta}(\theta_t)$. Define the pulled-back objective $\mathcal{L}_t^\alpha:\Theta_t\to \mathbb{R}$ by $\mathcal{L}_t^\alpha(\theta_t)=\mathcal{L}_{t'}(\alpha_{!,\theta}(\theta_t))$. Then $\mathcal{L}_t^\alpha$ is differentiable at $\theta_t$, and
\[
D\mathcal{L}_t^\alpha(\theta_t)
=
D\alpha_{!,\theta}(\theta_t)^*
\left(
D\mathcal{L}_{t'}(\theta_{t'})
\right),
\]
where $D\alpha_{!,\theta}(\theta_t)^*:\Theta_{t'}^*\to \Theta_t^*$ is the dual map of the derivative $D\alpha_{!,\theta}(\theta_t):\Theta_t\to \Theta_{t'}$.
\end{proposition}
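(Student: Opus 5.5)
This is the chain rule for Fréchet derivatives between finite-dimensional real vector spaces, rewritten in the covector notation used in Theorem~\ref{thm:reverse_mode_cksc}. The plan is to write $\mathcal{L}_t^\alpha=\mathcal{L}_{t'}\circ\alpha_{!,\theta}$ and apply the chain rule at $\theta_t$.

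One point needs care first. The statement assumes only that $\alpha_{!,\theta}$ is differentiable, as a morphism of $\ThetaC$, and that $\mathcal{L}_{t'}$ is differentiable at the single point $\theta_{t'}=\alpha_{!,\theta}(\theta_t)$. Because $\alpha_{!,\theta}$ is differentiable everywhere, it is in particular (Fréchet) differentiable at $\theta_t$. The classical chain rule only needs differentiability of the inner map at $\theta_t$ and of the outer map at $\theta_{t'}$, so these hypotheses suffice. I would fix norms on $\Theta_t$ and $\Theta_{t'}$, noting that all norms are equivalent in finite dimensions.

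The argument then runs as follows. Write
\[
\alpha_{!,\theta}(\theta_t+h)=\theta_{t'}+D\alpha_{!,\theta}(\theta_t)h+r_1(h), \qquad \|r_1(h)\|=o(\|h\|),
\]
and
\[
\mathcal{L}_{t'}(\theta_{t'}+k)=\mathcal{L}_{t'}(\theta_{t'})+D\mathcal{L}_{t'}(\theta_{t'})k+r_2(k), \qquad |r_2(k)|=o(\|k\|).
\]
Substitute $k(h)=D\alpha_{!,\theta}(\theta_t)h+r_1(h)$ into the second expansion. The first-order term in $h$ is $D\mathcal{L}_{t'}(\theta_{t'})\,D\alpha_{!,\theta}(\theta_t)h$.

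The only estimate to check, and the only place any real work is needed, is that the remainder is $o(\|h\|)$. That remainder is $D\mathcal{L}_{t'}(\theta_{t'})r_1(h)+r_2(k(h))$, and it is controlled by two facts:
\begin{itemize}
\item $\|k(h)\|\le (\|D\alpha_{!,\theta}(\theta_t)\|+1)\|h\|$ for small $h$, so $k(h)\to 0$ and $r_2(k(h))=o(\|h\|)$;
\item $D\mathcal{L}_{t'}(\theta_{t'})$ is a bounded linear functional, so its value on $r_1(h)$ is $o(\|h\|)$.
\end{itemize}
If $k(h)=0$ the term $r_2(k(h))$ vanishes and nothing needs estimating. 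This gives differentiability of $\mathcal{L}_t^\alpha$ at $\theta_t$, with
\[
D\mathcal{L}_t^\alpha(\theta_t)=D\mathcal{L}_{t'}(\theta_{t'})\circ D\alpha_{!,\theta}(\theta_t).
\]

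The last step is notational. By definition, the dual map $T^*$ of a linear map $T:\Theta_t\to\Theta_{t'}$ sends $\xi\in\Theta_{t'}^*$ to $\xi\circ T\in\Theta_t^*$. Taking $T=D\alpha_{!,\theta}(\theta_t)$ and $\xi=D\mathcal{L}_{t'}(\theta_{t'})$, the composite above is exactly $D\alpha_{!,\theta}(\theta_t)^*\bigl(D\mathcal{L}_{t'}(\theta_{t'})\bigr)$, which is the stated formula.

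I would add a remark that strict functoriality of $F_\theta$ lets this formula iterate consistently along composites $\beta\circ\alpha$, since $(T_2T_1)^*=T_1^*T_2^*$. This is not needed for the proposition itself.
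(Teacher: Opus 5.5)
Your proposal is correct and follows the paper's proof: write $\mathcal{L}_t^\alpha=\mathcal{L}_{t'}\circ\alpha_{!,\theta}$, apply the chain rule at $\theta_t$, and read the composite $D\mathcal{L}_{t'}(\theta_{t'})\circ D\alpha_{!,\theta}(\theta_t)$ as the pullback of the covector $D\mathcal{L}_{t'}(\theta_{t'})$ by the dual map. The only difference is that you reprove the finite-dimensional chain rule through the $o(\|h\|)$ remainder estimates, which is valid but which the paper simply invokes.
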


\begin{proof}
The pulled-back objective is the composite $\mathcal{L}_t^\alpha=\mathcal{L}_{t'}\circ \alpha_{!,\theta}$. By the chain rule for differentiable maps between finite-dimensional real vector spaces,
\[
D\mathcal{L}_t^\alpha(\theta_t)
=
D\mathcal{L}_{t'}(\alpha_{!,\theta}(\theta_t))
\circ
D\alpha_{!,\theta}(\theta_t).
\]
Since $\theta_{t'}=\alpha_{!,\theta}(\theta_t)$, this is
\[
D\mathcal{L}_t^\alpha(\theta_t)
=
D\mathcal{L}_{t'}(\theta_{t'})
\circ
D\alpha_{!,\theta}(\theta_t).
\]
Viewing derivatives of real-valued functions as covectors, composition with $D\alpha_{!,\theta}(\theta_t)$ is the pullback by the dual map $D\alpha_{!,\theta}(\theta_t)^*$. Hence,
\[
D\mathcal{L}_t^\alpha(\theta_t)
=
D\alpha_{!,\theta}(\theta_t)^*
\left(
D\mathcal{L}_{t'}(\theta_{t'})
\right).
\]
\end{proof}

If $\mathcal{L}_{t'}$ is the expected objective of a parameterized CKSC diagram in $\PC_{t'}$, then Theorem~\ref{thm:reverse_mode_cksc} computes $D\mathcal{L}_{t'}(\theta_{t'})$ within the state CMP $\PC_{t'}$. Proposition~\ref{prop:gradient_transport_parameter_pushforwards} then transports this covector back along the CCMP parameter pushforward. Thus, diagrammatic differentiation is fiber-wise, while the CCMP structure supplies the parameter transport between fibers.

\section{Discussion and future work}
\label{sec:discussion}

The main contribution of this paper is the typed polycategorical kernel semantics and its finite acyclic differentiation theory. The results in this paper are stated for finite acyclic diagrams over standard Borel spaces, with fixed diagram shape and fixed interface color morphisms. These hypotheses are part of the formal scope of the construction. Finiteness and acyclicity ensure that trace measures are obtained by iterated kernels and that internal wires can be marginalized without fixed-point semantics. The standard Borel assumption ensures that the conditional expectations used in Section~\ref{sec:diagrammatic_differentiation} can be represented by measurable functions of the conditioning variables. Fixed interface color morphisms enable typed connections under composition. The differentiation theorem is conditional, that is, it identifies the gradient of an expected objective with local expectations under hypotheses on local contexts, conditional expected future objectives, and local gradient representing functions.

The paper does not give a structural treatment of copying, discarding, feedback, or learned interface kernels. Copying, discarding, and learned interface transformations can be represented only when supplied as morphisms or parameterized vertices, while feedback would require additional cyclic or traced structure.

Bayesian extensions are a natural next step. Bayesian parameter learning would require priors on parameter objects, likelihood kernels, and posterior updates, and would therefore interact with categorical accounts of disintegration and Bayesian inversion~\citep{chojacobs2019} and with the treatment of conditioning in Markov categories~\citep{fritz2020}. Bayesian decision theory would require additional objects and morphisms for beliefs, actions, utilities, policies, and value functions. A related compositional direction is given by Bayesian open games, which incorporate stochastic environments, stochastic choices, and incomplete information into compositional game theory~\citep{bolt2023}.

Several categorical directions remain open. One direction is to relate CMPs more directly to Markov categories by adding structural morphisms for copying and discarding. Another direction is to extend the trace semantics to feedback and cyclic diagrams, possibly using traced or guarded categorical structure. Finally, the differentiation result of Section~\ref{sec:diagrammatic_differentiation} can be compared in the future with categorical accounts of gradient-based learning based on lenses, parametrized maps, and reverse derivative categories~\citep{cruttwell2022}. Such comparisons may clarify which parts of reverse-mode differentiation are specific to stochastic kernel diagrams and which are instances of a more general categorical mechanism.

\bibliographystyle{abbrvnat}
\bibliography{references}

@article{fritz2020,
  author  = {Fritz, Tobias},
  title   = {A synthetic approach to {M}arkov kernels, conditional independence and theorems on sufficient statistics},
  journal = {Advances in Mathematics},
  volume  = {370},
  pages   = {107239},
  year    = {2020},
  doi     = {10.1016/j.aim.2020.107239}
}

@book{kallenberg2021,
  author    = {Kallenberg, Olav},
  title     = {Foundations of Modern Probability},
  edition   = {3},
  series    = {Probability Theory and Stochastic Modelling},
  volume    = {99},
  publisher = {Springer},
  address   = {Cham},
  year      = {2021},
  doi       = {10.1007/978-3-030-61871-1}
}

@article{szabo1975,
  author  = {Szabo, Michael E.},
  title   = {Polycategories},
  journal = {Communications in Algebra},
  volume  = {3},
  number  = {8},
  pages   = {663--689},
  year    = {1975},
  doi     = {10.1080/00927877508822067}
}

@article{garner2008,
  author  = {Garner, Richard},
  title   = {Polycategories via pseudo-distributive laws},
  journal = {Advances in Mathematics},
  volume  = {218},
  number  = {3},
  pages   = {781--827},
  year    = {2008},
  doi     = {10.1016/j.aim.2008.02.001}
}

@article{cockettseely1997,
  author  = {Cockett, J. R. B. and Seely, R. A. G.},
  title   = {Weakly distributive categories},
  journal = {Journal of Pure and Applied Algebra},
  volume  = {114},
  number  = {2},
  pages   = {133--173},
  year    = {1997},
  doi     = {10.1016/0022-4049(95)00160-3}
}

@article{koslowski2005,
  author  = {Koslowski, J{\"u}rgen},
  title   = {A monadic approach to polycategories},
  journal = {Theory and Applications of Categories},
  volume  = {14},
  number  = {7},
  pages   = {125--156},
  year    = {2005}
}

@article{chojacobs2019,
  author  = {Cho, Kenta and Jacobs, Bart},
  title   = {Disintegration and {B}ayesian inversion via string diagrams},
  journal = {Mathematical Structures in Computer Science},
  volume  = {29},
  number  = {7},
  pages   = {938--971},
  year    = {2019},
  doi     = {10.1017/S0960129518000488}
}

@article{bolt2023,
  author  = {Bolt, Joe and Hedges, Jules and Zahn, Philipp},
  title   = {Bayesian open games},
  journal = {Compositionality},
  volume  = {5},
  number  = {9},
  year    = {2023},
  doi     = {10.32408/compositionality-5-9}
}

@inproceedings{cruttwell2022,
  author    = {Cruttwell, G. S. H. and Gavranovi{\'c}, Bruno and Ghani, Neil and Wilson, Paul and Zanasi, Fabio},
  title     = {Categorical Foundations of Gradient-Based Learning},
  booktitle = {Programming Languages and Systems},
  series    = {Lecture Notes in Computer Science},
  volume    = {13240},
  pages     = {1--28},
  publisher = {Springer},
  year      = {2022},
  doi       = {10.1007/978-3-030-99336-8_1}
}

@book{johnstone2002,
  author    = {Johnstone, Peter T.},
  title     = {Sketches of an Elephant: A Topos Theory Compendium},
  volume    = {1},
  series    = {Oxford Logic Guides},
  number    = {43},
  publisher = {Clarendon Press},
  address   = {Oxford},
  year      = {2002}
}

@book{jacobs1999,
  author    = {Jacobs, Bart},
  title     = {Categorical Logic and Type Theory},
  series    = {Studies in Logic and the Foundations of Mathematics},
  volume    = {141},
  publisher = {Elsevier},
  address   = {Amsterdam},
  year      = {1999}
}

\appendix

\section{KSC proofs (of Section~\ref{sec:markov_polycategories_ksc})}
\label{app:ksc_proofs}

\begin{proof}[Proof of Theorem~\ref{thm:trace_semantics_ksc}]
Fix a topological ordering $u_1,\ldots,u_N$ of $V(D)$. For each external input value $\mathbf{x}\in \underline{\mathbf{I}_D}$, the iterated product in Definition~\ref{def:trace_measure_ksc} defines a probability measure on $\prod_{\ell=1}^N\underline{\mathbf{B}_{u_\ell}}$. This follows by finite iteration of the defining property of Markov kernels. At each step $\ell$, the input map $R_\ell$ is measurable because it is built from coordinate projections and tuple formation.

We show that the trace kernel is measurable in the external input. Let $E\in \Sigma_{\underline{\mathbf{O}_D}}$. The map $P_{\mathrm{out}}$ is measurable, so the function
\[
(\mathbf{y}_1,\ldots,\mathbf{y}_N)
\mapsto
\mathbf{1}_E(P_{\mathrm{out}}(\mathbf{y}_1,\ldots,\mathbf{y}_N))
\]
is bounded and measurable. By induction on $\ell$, if $\varphi$ is any bounded measurable function on $\prod_{s=1}^{\ell}\underline{\mathbf{B}_{u_s}}$, then the function
\[
\mathbf{x}
\mapsto
\int
\varphi(\mathbf{y}_1,\ldots,\mathbf{y}_\ell)
\prod_{s=1}^{\ell}
k_{u_s}
\bigl(
d\mathbf{y}_s
\mid
R_s(\mathbf{x},\mathbf{y}_1,\ldots,\mathbf{y}_{s-1})
\bigr)
\]
is measurable. The induction step uses the fact that integration of a bounded measurable function against a Markov kernel gives a measurable function of the conditioning variable. Taking $\ell=N$ and the displayed indicator function proves that $\mathbf{x}\mapsto K_D^{u_1,\ldots,u_N}(E\mid \mathbf{x})$ is measurable. For fixed $\mathbf{x}$, the map $E\mapsto K_D^{u_1,\ldots,u_N}(E\mid \mathbf{x})$ is the pushforward of a probability measure along the measurable map $P_{\mathrm{out}}$, hence is a probability measure. Thus, $K_D^{u_1,\ldots,u_N}$ is a Markov kernel.

It remains to prove independence of the topological ordering. We identify products indexed by different topological orderings by the labels of their vertex coordinates. Thus all measures are regarded as measures on the same labelled product space $\prod_{u\in V(D)}\underline{\mathbf{B}_u}$.

Any two topological orderings of a finite acyclic directed graph are connected by a finite sequence of adjacent swaps of incomparable vertices. It is therefore enough to prove invariance under one such swap. Let two topological orderings differ only by interchanging adjacent vertices $a$ and $b$. Since both orderings are topological, there is no directed path from $a$ to $b$ or from $b$ to $a$ in the part of the ordering being swapped. In particular, the input map for $a$ does not depend on the output of $b$, and the input map for $b$ does not depend on the output of $a$.

Let $\Phi$ be a bounded measurable function on $\prod_{u\in V(D)}\underline{\mathbf{B}_u}$. After all kernels before $a$ and $b$ have been integrated, and before the kernels after $a$ and $b$ are integrated, the two relevant integrations have the form
\[
\int
\int
G(\eta,\mathbf{y}_a,\mathbf{y}_b)
\,k_a(d\mathbf{y}_a\mid R_a(\eta))
\,k_b(d\mathbf{y}_b\mid R_b(\eta)),
\]
or the same expression with the two kernel integrations interchanged. Here, $\eta$ denotes the already sampled earlier output values, and $G$ is the bounded measurable function obtained by integrating $\Phi$ against all later kernels. The function $G$ is measurable by the same kernel-measurability argument used above. Since both integrations are against probability measures, the Fubini-Tonelli theorem gives equality of the two iterated integrals. Hence, swapping adjacent incomparable vertices does not change the integral of $\Phi$.

Applying this to $\Phi=\mathbf{1}_E\circ P_{\mathrm{out}}$ shows that the two adjacent topological orderings give the same trace kernel. Since any two topological orderings are connected by adjacent swaps of incomparable vertices, the trace kernel is independent of the chosen topological ordering.
\end{proof}

\begin{proof}[Proof of Corollary~\ref{cor:ksc_unitality}]
We prove equality of Markov kernels.

First fix an input slot $j\in \{1,\ldots,m\}$. The composite $h\circ_{(1,j)}\id_{A_j}$ connects the unique output of $\id_{A_j}$ to the $j$-th input of $h$. Its input profile is
\[
\mathbf{A}=(A_1,\ldots,A_{j-1},A_j,A_{j+1},\ldots,A_m),
\]
and its output profile is $\mathbf{B}$.

Let $\mathbf{a}=(a_1,\ldots,a_m)\in \underline{\mathbf{A}}$ and let $E\in \Sigma_{\underline{\mathbf{B}}}$. By Definition~\ref{def:ksc},
\[
(h\circ_{(1,j)}\id_{A_j})(E\mid \mathbf{a})
=
\int_{\underline{A_j}}
\left(
\int_{\underline{\mathbf{B}}}
\mathbf{1}_E(\mathbf{b})\,
h(d\mathbf{b}\mid (a_1,\ldots,a_{j-1},z,a_{j+1},\ldots,a_m))
\right)
\delta_{a_j}(dz).
\]
The outer integral evaluates the integrand at $z=a_j$. Hence
\[
(h\circ_{(1,j)}\id_{A_j})(E\mid \mathbf{a})
=
\int_{\underline{\mathbf{B}}}
\mathbf{1}_E(\mathbf{b})\,h(d\mathbf{b}\mid \mathbf{a})
=
h(E\mid \mathbf{a}).
\]
Thus $h\circ_{(1,j)}\id_{A_j}=h$.

Now fix an output slot $i\in \{1,\ldots,n\}$. The composite $\id_{B_i}\circ_{(i,1)}h$ connects the $i$-th output of $h$ to the unique input of $\id_{B_i}$. Its input profile is $\mathbf{A}$, and its output profile is
\[
\mathbf{B}=(B_1,\ldots,B_{i-1},B_i,B_{i+1},\ldots,B_n).
\]
Let $\mathbf{a}\in \underline{\mathbf{A}}$ and let $E\in \Sigma_{\underline{\mathbf{B}}}$. By Definition~\ref{def:ksc},
\[
(\id_{B_i}\circ_{(i,1)}h)(E\mid \mathbf{a})
=
\int_{\underline{\mathbf{B}}}
\left(
\int_{\underline{B_i}}
\mathbf{1}_E(b_1,\ldots,b_{i-1},z,b_{i+1},\ldots,b_n)\,
\delta_{b_i}(dz)
\right)
h(d\mathbf{b}\mid \mathbf{a}).
\]
The inner integral evaluates the integrand at $z=b_i$. Therefore
\[
(\id_{B_i}\circ_{(i,1)}h)(E\mid \mathbf{a})
=
\int_{\underline{\mathbf{B}}}
\mathbf{1}_E(\mathbf{b})\,h(d\mathbf{b}\mid \mathbf{a})
=
h(E\mid \mathbf{a}).
\]
Thus $\id_{B_i}\circ_{(i,1)}h=h$.
\end{proof}

\begin{proof}[Proof of Proposition~\ref{prop:ksc_compatible_trace_semantics}]
Choose topological orderings of $D_1$ and $D_2$. Since the new wire is directed from $D_1$ to $D_2$, concatenating these two topological orderings gives a topological ordering of $D$. By Theorem~\ref{thm:trace_semantics_ksc}, the trace kernel of $D$ is independent of this choice.

Let the external input of $D$ be written as $V_j(\mathbf{x}_1,\mathbf{x}_{2,-j})$, where $\mathbf{x}_1$ is an input value for $D_1$ and $\mathbf{x}_{2,-j}$ gives the external inputs of $D_2$ other than the connected slot. Let $\mathbf{y}_1$ denote the external output value of $D_1$, and let $\mathbf{y}_{1,-i}$ be this output value with its $i$-th component removed. Let $z=\pi_i(\mathbf{y}_1)$ be the value carried by the connected wire.

By definition of trace semantics, integrating over the internal wires of $D_1$ gives the kernel $K_{D_1}(d\mathbf{y}_1\mid \mathbf{x}_1)$. Conditional on $\mathbf{y}_1$, the input to $D_2$ is $T_j(\mathbf{x}_{2,-j},z)$, and integrating over the internal wires of $D_2$ gives the kernel $K_{D_2}(d\mathbf{y}_2\mid T_j(\mathbf{x}_{2,-j},\pi_i(\mathbf{y}_1)))$. The external output of $D$ is $U_i(\pi_{-i}(\mathbf{y}_1),\mathbf{y}_2)$.

Therefore, for every measurable set $E$ in the external output space of $D$,
\[
K_D(E\mid V_j(\mathbf{x}_1,\mathbf{x}_{2,-j}))
=
\int
\left(
\int
\mathbf{1}_E
\left(
U_i(\pi_{-i}(\mathbf{y}_1),\mathbf{y}_2)
\right)
\,
K_{D_2}
(d\mathbf{y}_2\mid T_j(\mathbf{x}_{2,-j},\pi_i(\mathbf{y}_1)))
\right)
K_{D_1}(d\mathbf{y}_1\mid \mathbf{x}_1).
\]
This is the KSC formula for $K_{D_2}\circ_{(i,j)}K_{D_1}$. Hence, $K_D=K_{D_2}\circ_{(i,j)}K_{D_1}$.
\end{proof}

\section{CKSC proofs (of Section~\ref{sec:cksc_diagram_semantics})}
\label{app:cksc_proofs}

\begin{proof}[Proof of Corollary~\ref{cor:cksc_unitality}]
We prove equality of Markov kernels.

First fix an input slot $j\in \{1,\ldots,m\}$. By Definition~\ref{def:interface_system}, the identity color morphism $\id_{\chi(A_j)}$ is an interface color morphism from $A_j$ to $A_j$, and
\[
\kappa_{\id_{\chi(A_j)};A_j,A_j}=\id_{A_j}.
\]
By Definition~\ref{def:cksc},
\[
h\circ^{\id_{\chi(A_j)}}_{(1,j)}\id_{A_j}
=
h\circ_{(1,j)}
\bigl(
\kappa_{\id_{\chi(A_j)};A_j,A_j}\circ_{(1,1)}\id_{A_j}
\bigr).
\]
Substituting the identity interface kernel gives
\[
h\circ^{\id_{\chi(A_j)}}_{(1,j)}\id_{A_j}
=
h\circ_{(1,j)}
(\id_{A_j}\circ_{(1,1)}\id_{A_j}).
\]
By Corollary~\ref{cor:ksc_unitality}, $\id_{A_j}\circ_{(1,1)}\id_{A_j}=\id_{A_j}$ and $h\circ_{(1,j)}\id_{A_j}=h$. Hence,
\[
h\circ^{\id_{\chi(A_j)}}_{(1,j)}\id_{A_j}=h.
\]

Now fix an output slot $i\in \{1,\ldots,n\}$. By Definition~\ref{def:interface_system},
\[
\kappa_{\id_{\chi(B_i)};B_i,B_i}=\id_{B_i}.
\]
Using Definition~\ref{def:cksc}, we obtain
\[
\id_{B_i}\circ^{\id_{\chi(B_i)}}_{(i,1)}h
=
\id_{B_i}\circ_{(i,1)}
\bigl(
\kappa_{\id_{\chi(B_i)};B_i,B_i}\circ_{(i,1)}h
\bigr).
\]
Substituting the identity interface kernel gives
\[
\id_{B_i}\circ^{\id_{\chi(B_i)}}_{(i,1)}h
=
\id_{B_i}\circ_{(i,1)}
(\id_{B_i}\circ_{(i,1)}h).
\]
By Corollary~\ref{cor:ksc_unitality}, $\id_{B_i}\circ_{(i,1)}h=h$, and composing again with $\id_{B_i}$ along the same output slot gives $h$. Therefore,
\[
\id_{B_i}\circ^{\id_{\chi(B_i)}}_{(i,1)}h=h.
\]
\end{proof}

\begin{proof}[Proof of Proposition~\ref{prop:cksc_compatible_trace_semantics}]
Let $\widehat{D}_1$ and $\widehat{D}_2$ be the interface expansions of $D_1$ and $D_2$, respectively. The interface expansion $\widehat{D}$ of $D$ is obtained from $\widehat{D}_1$ and $\widehat{D}_2$ by inserting the unary interface vertex labelled by $\kappa_{f;B,C}:B\to C$ between the selected external output slot of $\widehat{D}_1$ and the selected external input slot of $\widehat{D}_2$.

By Definition~\ref{def:trace_kernel_cksc}, $K_{D_1}=K_{\widehat{D}_1}$, $K_{D_2}=K_{\widehat{D}_2}$, and $K_D=K_{\widehat{D}}$. By Proposition~\ref{prop:ksc_compatible_trace_semantics}, connecting $\widehat{D}_1$ to the unary interface vertex gives the KSC trace kernel
\[
\kappa_{f;B,C}\circ_{(i,1)}K_{\widehat{D}_1}.
\]
Applying Proposition~\ref{prop:ksc_compatible_trace_semantics} once more to connect this composite to $\widehat{D}_2$ along the $j$-th input slot gives
\[
K_{\widehat{D}}
=
K_{\widehat{D}_2}
\circ_{(i,j)}
\left(
\kappa_{f;B,C}\circ_{(i,1)}K_{\widehat{D}_1}
\right).
\]
Substituting $K_{D_1}=K_{\widehat{D}_1}$, $K_{D_2}=K_{\widehat{D}_2}$, and $K_D=K_{\widehat{D}}$ gives
\[
K_D
=
K_{D_2}\circ^f_{(i,j)}K_{D_1}
\]
by the CKSC formula of Definition~\ref{def:cksc}.
\end{proof}

\section{Functorial proofs (of Section~\ref{sec:ccmp})}
\label{app:functorial_proofs}

\begin{proof}[Proof of Proposition~\ref{prop:category_of_cmps}]
Let $\PC$ be a CMP over $(\K,\M,\iota)$. The identity map on objects and morphisms defines a CMP-functor $\Id_{\PC}:\PC\to \PC$. It preserves identity kernels and KSC because these structures are unchanged. It preserves object colors, morphism colors, and interface kernels by definition.

Now let $G:\PC\to \PC',~H:\PC'\to \PC''$ be CMP-functors over the color system $(\K,\M,\iota)$. Define $H G:\PC\to \PC''$ by composition of the object and morphism assignments: $(HG)(X)=H(GX),~(HG)(k)=H(Gk)$. For every object $X$,
\[
(HG)(\id_X)=H(G(\id_X))=H(\id_{GX})=\id_{H G X}.
\]
If $l\circ_{(i,j)}k$ is defined in $\PC$, then
\[
(HG)(l\circ_{(i,j)}k)
=
H(G(l\circ_{(i,j)}k))
=
H(Gl\circ_{(i,j)}Gk)
=
HGl\circ_{(i,j)}HGk.
\]
Thus, $HG$ preserves identities and KSC.

The composite $HG$ preserves object colors because $\chi''(HGX)=\chi'(GX)=\chi(X)$. It preserves morphism colors because $\psi''(HGk)=\psi'(Gk)=\psi(k)$.

It remains to check preservation of interface systems. Let $B,C\in \Ob(\PC)$ and $f\in \Int_{\PC}(B,C)$. Since $G$ preserves interfaces, $f\in \Int_{\PC'}(GB,GC)$ and $G(\kappa_{f;B,C})=\kappa'_{f;GB,GC}$. Since $H$ preserves interfaces, $f\in \Int_{\PC''}(HGB,HGC)$ and $H(\kappa'_{f;GB,GC})=\kappa''_{f;HGB,HGC}$. Therefore,
\[
(HG)(\kappa_{f;B,C})
=
H(G(\kappa_{f;B,C}))
=
H(\kappa'_{f;GB,GC})
=
\kappa''_{f;HGB,HGC}.
\]
Hence, $HG$ is a CMP-functor.

Composition of CMP-functors is associative because it is composition of the underlying object and morphism assignments. The identity CMP-functors are units for the same reason. Thus, CMPs over $(\K,\M,\iota)$ and CMP-functors form a category.
\end{proof}

\begin{proof}[Proof of Proposition~\ref{prop:cmp_functors_preserve_cksc_reductions}]
Choose an iterated CKSC expression $E$ realizing $D$. Applying $G$ to the vertex kernels in $E$, while leaving the interface color morphisms unchanged, gives an iterated CKSC expression over $GD$. The same interface color morphisms label the internal wires of $GD$, which is well-defined because $G$ preserves interface systems.

Each evaluation kernel of an iterated CKSC expression is a morphism of $\PC$, because it is obtained from vertex kernels and interface kernels by finitely many KSC operations, and the underlying Markov polycategory of $\PC$ is closed under KSC. Hence, $G$ may be applied to evaluation kernels.

We prove by induction on the construction of $E$ that the evaluation kernel of the image expression is the image under $G$ of the evaluation kernel of $E$.

If $E$ is a single vertex labelled by $k$, then the claim is immediate. The image expression is the single vertex labelled by $Gk$, and its evaluation kernel is $Gk$.

For the induction step, suppose that $E$ is formed from $(E_1,E_2,e)$, where $e$ carries the interface color morphism $f_e$. Let $H_1$ and $H_2$ be the evaluation kernels of $E_1$ and $E_2$, respectively. Suppose that the wire $e$ connects the $r$-th external output slot of the subdiagram of $E_1$ to the $s$-th external input slot of the subdiagram of $E_2$. The evaluation kernel of $E$ is $H_2\circ^{f_e}_{(r,s)}H_1$. By Definition~\ref{def:cksc},
\[
H_2\circ^{f_e}_{(r,s)}H_1
=
H_2\circ_{(r,s)}
\left(
\kappa_{f_e;B,C}\circ_{(r,1)}H_1
\right),
\]
where $B$ and $C$ are the connected output and input objects.

Since $G$ preserves KSC and interface kernels,
\[
G\bigl(H_2\circ^{f_e}_{(r,s)}H_1\bigr)
=
G H_2
\circ_{(r,s)}
\left(
\kappa'_{f_e;GB,GC}\circ_{(r,1)}G H_1
\right).
\]
By Definition~\ref{def:cksc}, the right-hand side is $G H_2\circ^{f_e}_{(r,s)}G H_1$. By the induction hypothesis, $G H_1$ and $G H_2$ are the evaluation kernels of the image expressions corresponding to $E_1$ and $E_2$. Hence, the image expression corresponding to $E$ has evaluation kernel $G(H_2\circ^{f_e}_{(r,s)}H_1)$.

Thus, the evaluation kernel of the image expression is the image under $G$ of the evaluation kernel of $E$. Since $E$ realizes $D$, the image expression realizes $GD$. Therefore, $GD$ is CKSC-reducible. By Corollary~\ref{cor:cksc_associativity_interchange}, the evaluation kernel of $E$ is $\langle D\rangle_{\PC}$, and the evaluation kernel of the image expression is $\langle GD\rangle_{\PC'}$. Hence, $G\langle D\rangle_{\PC} = \langle GD\rangle_{\PC'}$.

Finally, by Corollary~\ref{cor:cksc_associativity_interchange}, the morphism $\langle GD\rangle_{\PC'}$ has underlying Markov kernel equal to the trace kernel of $GD$.
\end{proof}

\section{Differentiation proofs (of Section~\ref{sec:diagrammatic_differentiation})}
\label{app:differentiation_proofs}

\begin{proof}[Proof of Proposition~\ref{prop:score_function_local_operator}]
Let $\lambda$ be a probability measure on $\underline{\mathbf{A}}\times \underline{\mathbf{S}}$ which does not depend on $\theta$, and let $\lambda_{\mathbf{A}}$ be its marginal on $\underline{\mathbf{A}}$. Define
\[
F_Q(\theta)
=
\int_{\underline{\mathbf{A}}\times \underline{\mathbf{S}}}
\left(
\int_{\underline{\mathbf{B}}}
Q(\mathbf{a},\mathbf{b},\mathbf{s})
p_\theta(\mathbf{b}\mid \mathbf{a})
\,\nu_{\mathbf{B}}(d\mathbf{b})
\right)
\lambda(d\mathbf{a},d\mathbf{s}).
\]
By the assumed interchange of differentiation and integration,
\[
D_\theta F_Q(\theta)
=
\int
Q(\mathbf{a},\mathbf{b},\mathbf{s})
D_\theta p_\theta(\mathbf{b}\mid \mathbf{a})
\,\nu_{\mathbf{B}}(d\mathbf{b})
\,\lambda(d\mathbf{a},d\mathbf{s}).
\]
On the set where $p_\theta(\mathbf{b}\mid \mathbf{a})>0$,
\[
D_\theta p_\theta(\mathbf{b}\mid \mathbf{a})
=
p_\theta(\mathbf{b}\mid \mathbf{a})
D_\theta \log p_\theta(\mathbf{b}\mid \mathbf{a}).
\]
Since $p_\theta(\mathbf{b}\mid \mathbf{a})>0$ for $(\lambda_{\mathbf{A}}\otimes \nu_{\mathbf{B}})$-almost every $(\mathbf{a},\mathbf{b})$, this substitution changes the integrand only on a set whose $(\mathbf{a},\mathbf{b})$-projection is $(\lambda_{\mathbf{A}}\otimes \nu_{\mathbf{B}})$-null, so the value of the integral is unchanged. Hence,
\[
D_\theta F_Q(\theta)
=
\int
Q(\mathbf{a},\mathbf{b},\mathbf{s})
D_\theta \log p_\theta(\mathbf{b}\mid \mathbf{a})
p_\theta(\mathbf{b}\mid \mathbf{a})
\,\nu_{\mathbf{B}}(d\mathbf{b})
\,\lambda(d\mathbf{a},d\mathbf{s}).
\]
The set where $p_\theta(\mathbf{b}\mid \mathbf{a})=0$ carries no mass under $k_\theta(d\mathbf{b}\mid \mathbf{a})=p_\theta(\mathbf{b}\mid \mathbf{a})\,\nu_{\mathbf{B}}(d\mathbf{b})$, so extending $G_{k,Q}^{\mathrm{sf}}$ by zero on this set does not affect integrals against $k_\theta$. Therefore,
\[
D_\theta F_Q(\theta)
=
\int_{\underline{\mathbf{A}}\times \underline{\mathbf{S}}}
\left(
\int_{\underline{\mathbf{B}}}
G_{k,Q}^{\mathrm{sf}}(\theta,\mathbf{a},\mathbf{b},\mathbf{s})
\,k_\theta(d\mathbf{b}\mid \mathbf{a})
\right)
\lambda(d\mathbf{a},d\mathbf{s}).
\]
This is the representing property of Definition~\ref{def:admissible_local_parameter_gradient_operator}.
\end{proof}

\begin{proof}[Proof of Proposition~\ref{prop:deterministic_pathwise_local_operator}]
Let $\lambda$ be a probability measure on $\underline{\mathbf{A}}\times \underline{\mathbf{S}}$ which does not depend on $\theta$. Since $k_\theta(d\mathbf{b}\mid \mathbf{a})=\delta_{g(\theta,\mathbf{a})}(d\mathbf{b})$, the function $F_Q$ of Definition~\ref{def:admissible_local_parameter_gradient_operator} is
\[
F_Q(\theta)
=
\int_{\underline{\mathbf{A}}\times \underline{\mathbf{S}}}
Q(\mathbf{a},g(\theta,\mathbf{a}),\mathbf{s})\,
\lambda(d\mathbf{a},d\mathbf{s}).
\]
By the assumed interchange of differentiation and integration,
\[
D_\theta F_Q(\theta)
=
\int_{\underline{\mathbf{A}}\times \underline{\mathbf{S}}}
D_\theta
\bigl[
Q(\mathbf{a},g(\theta,\mathbf{a}),\mathbf{s})
\bigr]
\lambda(d\mathbf{a},d\mathbf{s}).
\]
For each fixed $(\mathbf{a},\mathbf{s})$, the chain rule gives
\[
D_\theta
\bigl[
Q(\mathbf{a},g(\theta,\mathbf{a}),\mathbf{s})
\bigr]
=
D_{\mathbf{b}}Q(\mathbf{a},g(\theta,\mathbf{a}),\mathbf{s})
\circ
D_\theta g(\theta,\mathbf{a}).
\]
Thus,
\[
D_\theta F_Q(\theta)
=
\int_{\underline{\mathbf{A}}\times \underline{\mathbf{S}}}
G_{k,Q}^{\mathrm{det}}
(\theta,\mathbf{a},g(\theta,\mathbf{a}),\mathbf{s})
\lambda(d\mathbf{a},d\mathbf{s}).
\]
Since integration against $k_\theta(d\mathbf{b}\mid \mathbf{a})$ is integration against $\delta_{g(\theta,\mathbf{a})}$, this is equivalent to
\[
D_\theta F_Q(\theta)
=
\int_{\underline{\mathbf{A}}\times \underline{\mathbf{S}}}
\left(
\int_{\underline{\mathbf{B}}}
G_{k,Q}^{\mathrm{det}}(\theta,\mathbf{a},\mathbf{b},\mathbf{s})
\,k_\theta(d\mathbf{b}\mid \mathbf{a})
\right)
\lambda(d\mathbf{a},d\mathbf{s}).
\]
This is the representing property of Definition~\ref{def:admissible_local_parameter_gradient_operator}.
\end{proof}

\begin{proof}[Proof of Lemma~\ref{lem:local_context_extension}]
Consider first a non-negative measurable function $\varphi$. For $n\in \mathbb{N}$, the truncation $\varphi_n=\min(\varphi,n)$ is bounded and measurable, so the identity in Definition~\ref{def:admissible_local_context} holds for $\varphi_n$. As $n\to\infty$, the monotone convergence theorem applies to both sides: on the left-hand side with respect to the joint law of $(\overline{\mathbf{A}}_u,\overline{\mathbf{B}}_u,\overline{\mathbf{S}}_u)$ under $\nu_D^\theta$, and on the right-hand side first for the inner integral against $k_{u,\theta_u}(\,\cdot\mid \mathbf{a})$ and then for the outer integral against $\lambda_u^{\theta_{-u}}$. Hence, the identity holds for every non-negative measurable $\varphi$, with both sides possibly infinite.

Next, let $\varphi$ be measurable and integrable with respect to the joint law of $(\overline{\mathbf{A}}_u,\overline{\mathbf{B}}_u,\overline{\mathbf{S}}_u)$, and write $\varphi=\varphi^+-\varphi^-$. Applying the nonnegative case to $\varphi^+$ and $\varphi^-$ shows that all four integrals involved are finite, and subtracting the two identities gives the identity for $\varphi$, with both sides finite.

For the $\Theta_u^*$-valued case, fix a basis of $\Theta_u^*$ and write $\Phi=(\Phi_1,\ldots,\Phi_p)$ in coordinates. Each component $\Phi_i$ is integrable by assumption, so the scalar case applies componentwise, and the identity holds coordinatewise. The resulting identity of covectors does not depend on the chosen basis, because both sides are linear in $\Phi$ and a change of basis acts by the same invertible linear map on both sides.
\end{proof}

\end{document}